\newtheorem{theorem}{Theorem}
\newtheorem{proposition}[theorem]{Proposition}
\newtheorem{lemma}[theorem]{Lemma}
\newtheorem{fact}[theorem]{Fact}
\newtheorem{cor}[theorem]{Corollary}
\theoremstyle{definition}
\newtheorem{definition}[theorem]{Definition}
\newtheorem{example}[theorem]{Example}
\newcommand{\PA}{\textnormal{PA}}
\newcommand{\BSigma}{\textnormal{B}\Sigma}
\newcommand{\Q}{\textnormal{Q}}
\newcommand{\set}[2]{\left\{ #1 \ \mid \ #2 \right\} }
\newcommand{\res}{{\upharpoonright}}
\newcommand{\CT}{\textnormal{CT}}
\newcommand{\df}[1]{\textbf{#1}}
\newcommand{\num}[1]{\underline{#1}}
\newcommand{\ind}{\textnormal{ind}}
\newcommand{\INT}{\textnormal{INT}}
\newcommand{\LocColl}{\textnormal{LocColl}}
\newcommand{\LocInd}{\textnormal{LocInd}}
\newcommand{\Coll}{\textnormal{Coll}}
\newcommand{\ElDiag}{\textnormal{ElDiag}}
\newcommand{\val}[1]{{#1}^{\circ}}
\newcommand{\Val}{\textnormal{Asn}}
\newcommand{\tuple}[1]{\langle #1 \rangle}
\renewcommand{\Pr}{\textnormal{Pr}}
\newcommand{\dpt}{\textnormal{dp}}
\newcommand{\Th}{\textnormal{Th}}
\newcommand{\Lang}{\mathscr{L}}
\newcommand{\dom}{\textnormal{dom}}
\newcommand{\Con}{\textnormal{Con}}
\newcommand{\SigmanColl}{\Sigma_n \textnormal{-} \Coll}
\newcommand{\qcr}[1]{\ulcorner #1 \urcorner}
\newcommand{\StrReg}{\textnormal{SRP}}
\newcommand{\LPA}{\mathscr{L}_{\PA}}
\newcommand{\form}{\textnormal{Form}}
\newcommand{\Term}{\textnormal{Term}}
\newcommand{\Sent}{\textnormal{Sent}}
\newcommand{\Var}{\textnormal{Var}}
\newcommand{\FV}{\textnormal{FV}}
\newcommand{\ClTerm}{\textnormal{ClTerm}}
\newcommand{\TermSeq}{\textnormal{TermSeq}}
\newcommand{\ClTermSeq}{\textnormal{ClTermSeq}}
\newcommand{\pa}{\textnormal{PA}}
\newcommand{\TI}{\textnormal{TI}}
\newcommand{\Tr}{\textnormal{Tr}}
\newcommand{\Comp}{\textnormal{Comp}}
\title{Local collection scheme and end-extensions of models of compositional truth}
\author{Mateusz Łełyk, Bartosz Wcisło}
\begin{document}

	\maketitle
\begin{abstract}
 We introduce a principle of local collection for compositional truth predicates and show that it is conservative over the classically compositional theory of truth in the arithmetical setting. This axiom states that upon restriction to formulae of any syntactic complexity, the resulting predicate satisfies full collection. In particular, arguments using collection for the truth predicate applied to sentences occurring in any given (code of a) proof do not suffice to show that the conclusion of that proof is true, in stark contrast to the case of induction scheme. 
 
 We analyse various further results concerning end-extensions of models of compositional truth and collection scheme for the compositional truth predicate.
\end{abstract}

\section{Introduction}

The area of axiomatic truth theories investigates extensions of foundational theories, such as Peano Arithmetic ($\PA$)  with an additional predicate $T$ which is intended to denote the set of (codes of) true sentences. 

One of the canonical examples of these theories is $\CT^-$ (Compositional Truth). It is a theory of truth over $\PA$ whose axioms state that the predicate $T$ satisfies Tarski's compositional conditions for arithmetical sentences. For instance, a disjunction of two sentences is true if either of the disjuncts is. However, we do not assume that the truth predicate satisfies any induction whatsoever. All purely arithmetical formulae satisfy the induction scheme because $\CT^-$ by definition contains the whole $\PA$.

It is a very simple and classical fact that $\CT^-$ with the full induction, called $\CT$, is not conservative over $\PA$. By induction on the length of proofs, we can show that whatever is provable in $\PA$ is true and thus show the consistency of arithmetic. By a theorem of Kotlarski, Krajewski, and Lachlan, $\CT^-$ itself is conservative over $\PA$. In fact, not much induction is needed to yield non-conservativeness. It has been shown in \cite{wcislyk}, Theorem 13, that already $\CT^-$ with induction for $\Delta_0$-formulae proves new arithmetical sentences. 

Richard Kaye asked whether the conservativity result remains true if $\CT^-$ is enriched with full scheme of collection
 for the sentences containing the truth predicate.\footnote{The question was posed on a session of Midlands Logic Seminar, see \cite{kaye-slides}.} It is known that in presence of $\Delta_0$ induction the full schemes of collection and induction are equivalent. However, without the access to this small amount of induction, collection seems to be a very weak principle. If we add the full collection scheme to $\PA^-$ (the theory of the positive part of a discretely ordered semiring), then this extension is $\Pi_1$-conservative over $\PA^-$.\footnote{To our best knowledge, this result first appeared as Exercise 7.7 in \cite{kaye}.} One could hope for an analogous result for the compositional truth predicate. Unfortunately, the methods used by Kaye cannot be implemented directly in the setting of the truth predicate where the conservativity of collection appears to be a much harder problem. In particular, as shown by Smith \cite{smith} there are countable models $M\models \CT^-$ with no proper end-extensions and Kaye's argument rests on the fact that every model of $\PA^-$ can be properly end-extended.

In this paper, we provide a partial answer to the question of Kaye. We introduce a principle of \df{local collection}. It states that if we restrict our compositional truth predicate to sentences of any syntactic depth $c$, the resulting truncated predicate satisfies full collection. We show that the principle of local collection for the compositional truth predicate is conservative over $\PA$.

Already this result shows that there is no full analogy between collection and induction in the setting of truth theories. One could introduce a similar scheme of local induction saying that the truth predicate truncated to sentences of any fixed syntactic depth $c$ satisfies full induction. We could readily check that this weaker form of induction is enough to show that there are no proofs of contradiction in $\PA$, since any given proof $d$ involves only formulae of some bounded syntactic depth $c$, so we can check by induction that all formulae in $d$ are true. This shows that local induction is not conservative over $\PA$, in contrast to local collection.

\section{Preliminaries}

In this section, we present some basic definitions and background results.

\subsection{Arithmetic and coding}

This paper deals with extensions of Peano Arithmetic ($\PA$). This is a theory in the language $\LPA = \{0,S, +, \times\}$ consisting of finitely many basic axioms of Robinson's Arithmetic $\Q$ which essentially say how $+$ and $\times$ can be defined inductively in terms of the successor function, and the induction scheme.

Full induction scheme is equivalent to induction for $\Delta_0$--formulae together with full \df{collection scheme}, $\Coll$. The latter consists of all formulae of the following form (where we allow $\phi(x,y)$ to contain more free variables than just $x,y$):
\begin{displaymath}
\forall x< a \exists y \ \ \phi(x,y) \rightarrow \exists b \forall x < a  \exists y<b \ \ \phi(x,y).
\end{displaymath}
Intuitively, collection scheme expresses that any function with a bounded domain has bounded range. This is clearly true in the natural numbers, since bounded segments of $\mathbb{N}$ are finite and hence the image of any such set is also finite. 
 Induction for $\Delta_0$-formulae is crucial for the equivalence between induction and collection as shown by Kaye.\footnote{As we already indicated, this appears as Exercise 7.7 in \cite{kaye}.}
\begin{theorem}[Kaye] \label{th_kaye}
	$\PA^-$ with full collection scheme (but no induction) is conservative for $\Pi_1$-formulae over $\PA^-$.
\end{theorem}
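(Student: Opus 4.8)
The plan is to recast the statement model-theoretically and prove it by a chain of end-extensions. First I would reduce it to the following assertion: \emph{every model $M\models\PA^-$ has an end-extension $N\models\PA^-+\Coll$}. This suffices. Indeed, if $\pi$ is a $\Pi_1$-sentence with $\PA^-+\Coll\vdash\pi$ but $\PA^-\nvdash\pi$, pick, by L\"owenheim--Skolem, a countable $M\models\PA^-+\neg\pi$; write $\neg\pi$ as $\exists\bar x\,\delta(\bar x)$ with $\delta\in\Delta_0$ and fix a witness $\bar a\in M$. Taking $N\models\PA^-+\Coll$ end-extending $M$, the $\Delta_0$-formula $\delta$ still holds of $\bar a$ in $N$, so $N\models\neg\pi$, contradicting $N\models\pi$.

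To build $N$, I would form a chain of countable models of $\PA^-$, $M=M_0\subsetneq M_1\subsetneq M_2\subsetneq\cdots$, each a proper initial segment of the next. This is exactly where the ingredient stressed in the introduction enters: every model of $\PA^-$, being the non-negative part of a discretely ordered commutative ring $R$, has a proper end-extension which is again a model of $\PA^-$ — for instance the non-negative part of $R[t]$ ordered with $t$ infinite. Put $N=\bigcup_n M_n$. Since $\PA^-$ is $\Pi_2$-axiomatisable it is preserved under unions of chains, so $N\models\PA^-$; and a union of initial-segment extensions of $M_0$ is again an end-extension of $M_0$, so the reduction above applies to $N$. Everything thus comes down to choosing the $M_n$ so that $N\models\Coll$.

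The mechanism for this is the simple remark that any element of $M_{n+1}\setminus M_n$ lies above all of $M_n$. Hence, given an instance of collection with a formula $\phi(x,y,\bar z)$ and parameters $a,\bar z\in M_n$ for which $M_n\models\forall x<a\,\exists y\,\phi(x,y,\bar z)$, witnesses for all the (countably many) $x<a$ already belong to $M_n$, and any single fresh element $c\in M_{n+1}$ bounds them; so $M_{n+1}\models\exists b\,\forall x<a\,\exists y<b\,\phi(x,y,\bar z)$. One then runs a bookkeeping that enumerates all pairs consisting of a formula and a finite tuple of parameters drawn from $\bigcup_n M_n$, servicing each potential collection instance at an appropriate stage.

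I expect the main obstacle to be that $\phi$ may carry unbounded quantifiers, so it need not be absolute along the chain. Two bad things can then occur: a collection premise $\forall x<a\,\exists y\,\phi$ might become true only at a late stage, once new witnesses for the inner existentials have been added; and a bound $b$ adjoined at one stage might fail to collect at a later stage, because $\phi$ changes its verdict on some $\phi(x,y,\bar z)$. The construction must be organised so these effects do not outrun one another, e.g.\ by interleaving \emph{witnessing} steps, at which $M_{n+1}$ is chosen to realise in advance all existential requirements that collection instances over $M_n$ could later invoke, with \emph{collecting} steps as above, and then checking that in $N$ every serviced premise has stabilised and every adjoined bound survives. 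Making this dovetailing converge is the heart of the argument; by contrast the treatment of $\Delta_0$-collection is immediate, since then $\phi$ is absolute and a single fresh element of a proper end-extension is already a simultaneous bound for all instances, and the remaining ingredients — $\Pi_2$-preservation of $\PA^-$ under unions of chains, $\Delta_0$-absoluteness of $\neg\pi$, and the existence of proper end-extensions of models of $\PA^-$ — are routine.
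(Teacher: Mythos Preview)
The paper does not give its own proof of this theorem; it cites it as Exercise~7.7 in Kaye's book. But the surrounding text makes the intended argument clear: it remarks that ``Kaye's argument rests on the fact that every model of $\PA^-$ can be properly end-extended,'' and Proposition~\ref{stw_kappa_like_daje_kolekcje} shows that any $\kappa$-like model for regular $\kappa$ satisfies full collection automatically, for purely cardinality reasons. Your model-theoretic reduction and your observation that the non-negative part of $R[t]$ furnishes proper end-extensions are exactly the right ingredients; what differs from the intended proof is the length of the chain. Iterating $\omega_1$ times (with unions at countable limit stages) produces an $\omega_1$-like end-extension $N$ of $M$; $\PA^-$ is $\Pi_2$ and so survives the union, and since every bounded interval of $N$ is countable while $N$ has cofinality $\omega_1$, every function on a bounded domain has bounded range and full collection holds with no bookkeeping whatsoever.

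The gap in your proposal is that you insist on an $\omega$-chain and then hope to arrange collection by dovetailing. You correctly diagnose the obstacle---formulae with unbounded quantifiers are not absolute along the chain, so a collection premise can switch on at a late stage and a previously adequate bound can be invalidated---but you do not resolve it, and there is no evident way to do so here. A union of cofinality $\omega$ offers no cardinality leverage, and $\PA^-$ is far too weak to give you any control over how the truth of complex formulae evolves under successive $R[t]$ extensions; your proposed ``witnessing steps'' would have to anticipate the behaviour of arbitrary first-order formulae across all future stages of a construction that is not even elementary, and nothing in sight supplies that foresight. The remedy is not cleverer bookkeeping but simply a longer chain.
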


Peano arithmetic, and its much weaker fragments are capable of representing syntactic notions. Below, we list formulae representing syntactic notions which we will use throughout the paper.

\begin{definition} \label{def_syntactic notions}
	\begin{itemize}
		\item $\Term_{\LPA}(x)$ states that $x$ is (a code of) an arithmetical term.
		\item $\TermSeq_{\LPA}(x)$ states that $x$ is (a code of) a sequence of arithmetical terms.
		\item $\ClTerm_{\LPA}(x)$ states that $x$ is (a code of) a closed arithmetical term.
		\item $\ClTermSeq_{\LPA}(x)$ states that $x$ is (a code of) a sequence of closed arithmetical terms.
		\item $\Var(x)$ states that $x$ is (a code of) a first-order variable.
		\item $y=\FV(x)$ states that $y$ is the set of free variables of $x$ (which is either a term or a formula in the language of arithmetic).
		\item $\form_{\LPA}(x)$ states that $x$ is (a code of) an arithmetical formula.
		\item $\form^{\leq 1}_{\LPA}(x)$ states that $x$ is (a code of) an arithmetical formula with at most one free variable.
		\item $\Sent_{\LPA}(x)$ states that $x$ is (a code of) an arithmetical sentence.
		\item $y=\num{x}$ is a binary formula which states that $y$ is (a code of) a numeral denoting the number $x$. 
		\item $y = \val{x}$ states that $x$ is a closed arithmetical term and $y$ is its value. For instance $(\mathbb{N}, S, +, \times) \models \val{\left(\qcr{ S(0) + S(S(0))} \right)} = 3$.
		\item $\Val(\alpha,x)$ states that $\alpha$ is an assignment for $x$, i.e., a finite function whose domain contains all free variables of $s$, where $x$ is either a formula or a term. We will use $\Val(x)$ to denote the set of assignments of $x$ and write $\alpha \in \Val(x)$ instead of $\Val(\alpha,x)$. If $\alpha$ is an assignment for a formula $\phi$, then by $\phi[\alpha]$, we mean a sentence in which $\num{\alpha(v)}$ has been substituted for $v$, for every  $v$ free variable of $\phi$. If $\alpha$ is an assignment for a term $t$, then $t^{\alpha}$ denotes the value of $t$ under this assignment. 
		\item $\beta \sim_v \alpha$ means that $\beta$ and $\alpha$ are assignments, the domain of $\beta$ is $\dom(\alpha) \cup \{v\}$ (which is possibly the same as $\dom (\alpha)$), and the values of $\beta$ are the same as that of $\alpha$, possibly except for $\beta(v)$.
	\end{itemize}
\end{definition}

We will use some conventions to improve readability of the paper. We will write provably functional formulae as if they were function symbols (which we already started doing above). For instance, we will use the expression $\num{x}$ like a term. In particular, we will typically be suppressing formulae describing syntactic operations and simply write the results of these operations. For instance, if $\phi$ and $\psi$ are codes of sentences, then $T(\phi \wedge \psi)$ is an abbreviation for "For all $z$, if $z$ is the conjunction of $\phi$ and $\psi$, then $T(z)$." We will sometimes confuse formulae with sets defined with these formulae, e.g., writing $x \in \form_{\LPA}$ instead of $\form_{\LPA}(x)$.

The notion of \df{syntactic depth} plays an important role in this paper.
\begin{definition} \label{def_synt_depth}
	Let $\phi$ be a formula. By \df{syntactic depth} of $\phi$, we mean the maximal depth of nesting of connectives and quantifiers in $\phi$. We will denote this by $\dpt(\phi)$. By $\dpt(x)$, we will also mean an arithmetical formula representing this function.
\end{definition}

\subsection{Models of arithmetic}

In this paper, we will make extensive use of model-theoretic techniques. All relevant model-theoretic background may be found in \cite{kaye}. Let us discuss some results of particular importance. 

\begin{definition} \label{def_recursive_saturation}
	Let $M$ be any model over a finite language. A set $p$ of formulae is a \df{type} if at most one free variable $v$ and finitely many parametres $a_1, \ldots, a_n$ occur in formulae contained in $p$, and for every finite subset $\phi_1(v), \ldots, \phi_n(v)$ there is an element
	 $a_0$ from $M$ such that $M \models \phi_i(a_0)$ for all $i \leq n$. The type is \df{realised} if there is an element $a \in M$ which satisfies all formulae in $p$. We say that $p$ is \df{recursive} (or computable) if the set of the G\"odel codes of formulae from $p$ is computable.
	  We say that $M$ is \df{recursively saturated} if any recursive type over $M$ is realised in $M$.\footnote{In the name "recursively saturated," there is admittedly slight tension with the current naming conventions where "computable" is the preferred expression, but "computably saturated" sounds extremely awkward.}
\end{definition}

Recursive saturation is of crucial importance due to the following theorem:
\begin{theorem}[Barwise--Schlipf--Ressayre] \label{th_bsr}
	If $M$ is a countable recursively saturated model of $\Th \supset \PA$ and $\Th'$ is a computable theory consistent with the elementary diagram of $M$, then $M$ can be expanded to a model of $\Th'$.
\end{theorem}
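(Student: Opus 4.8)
The statement asserts that a countable recursively saturated model of $\PA$ is \emph{resplendent}, and I would prove it in that form. First reduce to the case where $\Lang' := \LPA \cup \{R\}$ adjoins a single new relation symbol $R$: finitely many new relation symbols of various arities fold into one $R$ using the $\PA$-definable pairing function, and new function or constant symbols can be replaced by their graphs together with the corresponding functionality statements added to $\Th'$; all of this keeps $\Th'$ computable and keeps it consistent with $\ElDiag(M)$. Abbreviate $D := \ElDiag(M)$, a set of sentences in $\LPA \cup \{\num a : a \in M\}$; by hypothesis $\Th' \cup D$ is consistent. Using countability, fix an enumeration $M = \{m_0, m_1, \dots\}$ and an enumeration $\sigma_0, \sigma_1, \dots$ of all $\Lang' \cup \{\num a : a \in M\}$-sentences.

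Next, run a Henkin-style construction in which every witness introduced is a numeral $\num{m}$ naming an element of $M$. Build an increasing chain $T_0 \subseteq T_1 \subseteq \cdots$ of finite sets of $\Lang' \cup \{\num a : a \in M\}$-sentences, maintaining that $\Th' \cup D \cup T_k$ is consistent at every stage, and interleaving three tasks: (i) \emph{completeness}: add $\sigma_k$ if the result is consistent, otherwise add $\neg\sigma_k$; (ii) \emph{deciding $R$}: add whichever of $R(\num{m_k})$, $\neg R(\num{m_k})$ keeps the theory consistent --- either one does, since $T_k$ does; (iii) \emph{Henkin witnesses}: whenever $\exists x\,\phi(x)$ has entered the theory, add $\phi(\num{m_j})$ for a suitable $j$. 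Once every task is attended to, $\Th' \cup D \cup \bigcup_k T_k$ is a complete Henkin theory, and since $D$ proves every closed $\LPA$-term equal to some numeral (and there are no new function symbols), its term model is, under the map $\num m \mapsto m$, an $\Lang'$-expansion of $M$; pulling the interpretation of $R$ back along this identification yields the subset of $M$ for which $(M, R) \models \Th'$. The scheduling of the three families of tasks and the verification that consistency is preserved at each step are routine and use only that $\PA$ has pairing and definable coding of finite sequences.

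The heart of the matter is task (iii): assuming $\Th' \cup D \cup T_k \cup \{\exists x\,\phi(x)\}$ is consistent, one must produce $m_j \in M$ with $\Th' \cup D \cup T_k \cup \{\phi(\num{m_j})\}$ still consistent. Write $\Psi(x)$ for the conjunction of $T_k$ with $\phi(x)$ --- an $\Lang'$-formula with finitely many parameters $\bar a$ from $M$. I would identify the set of suitable $m$ with the set of realizations in $M$ of an appropriate type $q(x)$ over $M$ in the parameters $\bar a$: $q$ collects the arithmetical conditions a witness is forced to satisfy, is finitely satisfiable because $\Th' \cup D \cup \{\exists x\,\Psi(x)\}$ is consistent while $D$ already decides every $\LPA$-sentence with parameters from $M$, and --- this is the delicate point --- can be set up to be \emph{recursive}, using that $\Th'$ is computable and that only finitely many axioms of $D$ (each an $\LPA$-sentence, true in $M$) enter any single derivation, so that the arithmetical demands on the witness can be phrased without appeal to $D$ (for instance via an arithmetized-completeness-style description of finite approximations to a model of $\Th'$). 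Then recursive saturation of $M$ realizes $q$, and any realization is a good $m_j$. This is precisely where all three hypotheses are essential: countability, to run the construction as a chain; computability of $\Th'$, to make $q$ recursive; and recursive saturation, to realize $q$ \emph{inside} $M$. For a general countable model of $\PA$ --- or of $\CT^-$, witness the models with no proper end-extensions mentioned in the introduction --- no such witnesses need exist, so recursive saturation cannot be dropped; and I expect pinning down the precise recursive presentation of $q$ to be the most delicate part of writing out the full proof.
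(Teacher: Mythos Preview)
The paper states this theorem as a background result in the preliminaries and does \emph{not} supply a proof, so there is nothing in the paper to compare your proposal against. Your outline is the standard Henkin-with-internal-witnesses argument and is correct in shape; let me only sharpen the one place you flag as delicate.

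You are right that the crux is producing the recursive type $q(x)$ for task~(iii), but your hint toward ``an arithmetized-completeness-style description of finite approximations to a model of $\Th'$'' is a detour you do not need. Writing $\Psi(x,\bar y)$ for the $\Lang'$-formula obtained from your $\Psi(x)$ by replacing the parameters $\bar a$ with fresh variables $\bar y$, take
\[
q(x)\ :=\ \bigl\{\,\theta(x,\bar a)\ :\ \theta(x,\bar y)\in\LPA,\ \ \Th'\vdash \forall x\,\forall \bar y\,\bigl(\Psi(x,\bar y)\rightarrow \theta(x,\bar y)\bigr)\,\bigr\}.
\]
This is c.e.\ directly from the computability of $\Th'$ (and hence may be taken recursive by Craig's trick); the diagram $D$ does not enter its definition at all. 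Finite satisfiability is exactly as you say: any $(N,R)\models \Th'\cup D$ with $N\models \exists x\,\Psi(x,\bar a)$ witnesses $\exists x\,\bigwedge_i\theta_i(x,\bar a)$, and since $N\models D$ this transfers down to $M$. For the converse direction, if $m$ realises $q$ but $\Th'\cup D\cup\{\Psi(\num m,\bar a)\}$ were inconsistent, compactness gives a single $\LPA$-sentence $\sigma(\bar a,\bar c)$ true in $M$ with $\Th'\vdash \forall x\,\forall\bar y\,\forall\bar z\,\bigl(\Psi(x,\bar y)\rightarrow\neg\sigma(\bar y,\bar z)\bigr)$; then $\forall\bar z\,\neg\sigma(\bar a,\bar z)$ lies in $q(x)$, so $M\models \forall\bar z\,\neg\sigma(\bar a,\bar z)$, contradicting $M\models\sigma(\bar a,\bar c)$. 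That is the whole of the ``delicate point''.

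Two minor remarks: your task~(ii) is redundant once task~(i) is in place (completeness already decides every atomic $R$-sentence), and if $R$ were not unary you would in any case need to decide $R$ on tuples rather than on single $\num{m_k}$'s; neither affects correctness.
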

Moreover, one can prove that there is always an expansion of $M$ satisfying $\Th'$ which is once again recursively saturated and thus also satisfies the assumptions of the above theorem. This property of countable recursively saturated models of $\PA$ is called \df{chronic resplendence}. 
Another important property of recursively saturated models is that they can be relatively easily classified.
\begin{definition}
	Let $M \models \PA$. By the \df{standard system} of $M$, we mean the family of $X \subseteq \mathbb{N}$ such that $X = A \cap \mathbb{N}$, where $A$ is definable with parametres in $M$. (Here and everywhere hereafter in the article, we identify the initial segment in a model of $\PA$ isomorphic with natural numbers with the $\mathbb{N}$ itself.)
\end{definition}

\begin{theorem}[Paris--Friedman] \label{th_paris_friedman}
	Suppose that $M,N \models \PA$ are countable and recursively saturated. Then $M \simeq N$ iff they satisfy the same sentences and have exactly the same standard systems. 
\end{theorem}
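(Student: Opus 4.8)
The forward direction is immediate, since an isomorphism preserves satisfaction of all $\LPA$-sentences and carries parametrically definable subsets to parametrically definable subsets; so isomorphic models are elementarily equivalent and have the same standard system. For the converse I would run a back-and-forth construction. Fix enumerations $M = \{m_i : i \in \omega\}$ and $N = \{n_i : i \in \omega\}$, and build tuples $\bar a^{(0)} \subseteq \bar a^{(1)} \subseteq \cdots$ in $M$ and $\bar b^{(0)} \subseteq \bar b^{(1)} \subseteq \cdots$ in $N$ so that at every stage $(M, \bar a^{(s)}) \equiv (N, \bar b^{(s)})$, every $m_i$ eventually enters some $\bar a^{(s)}$, and every $n_j$ eventually enters some $\bar b^{(s)}$. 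The invariant $(M,\bar a^{(s)})\equiv(N,\bar b^{(s)})$ makes the induced correspondence $\bar a^{(s)} \mapsto \bar b^{(s)}$ a well-defined partial bijection preserving all formulas, so its union is the required isomorphism; the base case $s = 0$ is just the hypothesis $M \equiv N$.

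Everything hinges on the extension step, which in turn rests on the following lemma: if $K \models \PA$ is recursively saturated and $\bar a$ is a finite tuple in $K$, then the complete type $\Th(K,\bar a) = \{\qcr{\phi} : \phi(\bar x) \in \form_{\LPA},\ K \models \phi(\bar a)\}$ belongs to the standard system of $K$. To prove it, work with a fixed $\PA$-definable coding of finite sets by numbers (written $\in$) and consider the $1$-type $p(z) = \{\qcr{\phi} \in z \leftrightarrow \phi(\bar a) : \phi(\bar x) \in \form_{\LPA}\}$ over $K$ in the variable $z$ (with parameters $\bar a$). This type is recursive, and any of its finite fragments is realised by a number coding the corresponding genuine finite set of Gödel codes; hence by recursive saturation $p$ is realised by some $c \in K$, and then $\{n \in \mathbb{N} : K \models n \in c\}$ --- a set definable from the parameter $c$ --- computes $\Th(K, \bar a)$.

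For the extension step, assume $(M, \bar a) \equiv (N, \bar b)$ and let $m \in M$ be given; we seek $b' \in N$ with $(M, \bar a m) \equiv (N, \bar b b')$. Put $p = \Th(M, \bar a m)$. By the lemma $p \in \textnormal{SSy}(M) = \textnormal{SSy}(N)$, so there is $d \in N$ with $\{n \in \mathbb{N} : N \models n \in d\} = p$. Over $N$ consider the recursive $1$-type $r(x) = \{\qcr{\phi} \in d \rightarrow \phi(x, \bar b) : \phi(x,\bar y) \in \form_{\LPA}\}$ with parameters $d, \bar b$. For finite satisfiability, a finite fragment of $r$ is decided by the finitely many $\phi_i$ with $\qcr{\phi_i} \in p$, i.e. with $M \models \phi_i(m,\bar a)$; then $\exists x \bigwedge_i \phi_i(x,\bar y)$ is witnessed by $m$ in $M$, hence lies in $\Th(M,\bar a) = \Th(N,\bar b)$, so it holds in $N$ and any witness satisfies the fragment. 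By recursive saturation $r$ is realised by some $b' \in N$; since $p$ is a complete type over $\bar a$, realising $r$ forces $\Th(N, \bar b b') = p = \Th(M, \bar a m)$, which is what we wanted. The step that instead matches a prescribed $n \in N$ on the left is symmetric.

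The main obstacle is the lemma, or more precisely the reason recursive saturation alone is insufficient: the type that must be realised in $N$, namely $\Th(M, \bar a m)$, is in general not recursive, and the only available bridge to $N$ is the shared standard system, which lets this type reappear in $N$ as the standard part of an internal ``finite'' set $d$; only then does one obtain a genuinely recursive type $r$ amenable to recursive saturation. The remaining ingredients --- finite-satisfiability bookkeeping, closure of standard systems under recursive operations, and the interleaving that guarantees surjectivity --- are routine.
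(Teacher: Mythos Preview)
Your argument is correct and is in fact the standard back-and-forth proof of the Paris--Friedman theorem. Note, however, that the paper does not supply its own proof of this statement: Theorem~\ref{th_paris_friedman} is quoted as a background result in the preliminaries and is not proved in the text, so there is no alternative argument to compare yours against.
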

The same result holds if we replace $\PA$ with any other theory in countable language containing $\PA$ and satisfying full induction for the expanded language.
 Another theorem of crucial importance is:
\begin{theorem}[MacDowell--Specker] \label{th_macdowell_specker}
	Let $M$ be a model over a countable language containing the language of arithmetic and suppose that it satisfies full induction scheme for that language. Then there exists an elementary extension $N \succ M$ such that $M$ and $N$ have the same cardinality and for every $a \in N \setminus M$ and every $b \in M$, $N \models a >b$.
\end{theorem}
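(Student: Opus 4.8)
The plan is to realise over $M$ a suitably chosen complete type and then close it off under definable functions, following Gaifman's minimal-type construction. Write $\La$ for the language of $M$. The goal is a complete type $p(x)$ over $M$, in $\La$ with parameters from $M$, that is \emph{unbounded} (contains $x>b$ for every $b\in M$), \emph{definable} (for each $\La$-formula $\varphi(x,\bar y)$ there is an $\La$-formula $(d_p\varphi)(\bar y)$ with parameters from $M$ such that $\varphi(x,\bar b)\in p$ iff $M\models(d_p\varphi)(\bar b)$, for all $\bar b\in M$), and \emph{minimal} (for every $\La$-definable function $f(x,\bar y)$ and every $\bar b\in M$, either $(f(x,\bar b)=e)\in p$ for some $e\in M$, or $(f(x,\bar b)>d)\in p$ for every $d\in M$). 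Granting this, $p$ is finitely satisfiable in $M$ and hence, by compactness, consistent with $\ElDiag(M)$, so we may fix $M^*\succ M$ and $a\in M^*$ realising $p$, and set $N:=\mathrm{dcl}^{M^*}(M\cup\{a\})$. Full $\La$-induction supplies $M$ with definable Skolem functions (the minimisation operator $\mu x.\,\psi(x,\bar z)$ is $\La$-definable and provably total), so the Tarski--Vaught test gives $N\preceq M^*$, and in particular $M\preceq N$. Since $\La$ is countable, $|N|\le\aleph_0\cdot|M|=|M|$, and $M\subseteq N$ yields $|N|=|M|$. Finally, if $c\in N\setminus M$, write $c=f^{M^*}(a,\bar b)$ with $f$ $\La$-definable and $\bar b\in M$; the first alternative of minimality would give $c=e\in M$, so the second holds and $M^*\models f(a,\bar b)>d$ for every $d\in M$, i.e.\ $c>d$ in $N$ for every $d\in M$. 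Thus $N$ is the required elementary end-extension of the same cardinality (and, by the same data, a conservative one).

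The heart of the matter is the construction of $p$, and this is exactly where \emph{full} induction for $\La$ --- not merely a fragment --- is used. Fix an enumeration $\langle\varphi_n(x,\bar y_n):n<\omega\rangle$ of all $\La$-formulas with a distinguished variable $x$; a countable list suffices because the definability requirement lets each stage handle all parameter instances of a single $\varphi_n$ simultaneously. One builds the formulas $d_p\varphi_n$ by recursion on $n$, maintaining the invariant that the commitments made so far --- at a tuple $\bar b$, the formulas $\pm\varphi_i(x,\bar b)$ for $i<n$ with signs read off $(d_p\varphi_i)(\bar b)$ --- are, uniformly in $\bar b$, finitely satisfiable in $M$ and unbounded, which for each fixed $n$ is a single $\La$-sentence true in $M$. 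Deciding $\varphi_n(x,\bar b)$ according to whether it holds unboundedly often within the commitments-so-far at $\bar b$ preserves this invariant and is routine predicate logic. The delicate point is securing \emph{minimality}: when $\varphi_n$ presents a definable function $f(x,\bar y)$, one must arrange, for each $\bar b$, either a commitment fixing $f(x,\bar b)$ to a particular element of $M$ (chosen definably, say as the least admissible value) or a commitment forcing $f(x,\bar b)>d$ for every $d\in M$. Showing that, uniformly in $\bar b$, one of these options keeps the commitments finitely satisfiable is an overspill argument carried out inside $M$: it involves the least-number principle applied to $\La$-definable subsets of $M$ that vary with the parameter $\bar b$, and this is precisely the use of full $\La$-induction. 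Taking $p$ to be the union of all commitments --- complete and definable, since every formula has been decided --- completes the construction.

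The main obstacle is therefore the minimality step of the type construction; the rest is bookkeeping with Tarski--Vaught and a cardinality count. (Equivalently, one may package the construction as a \emph{minimal definable ultrafilter} $U$ on the $\La$-definable subsets of $M$ --- one for which every definable function is constant or injective on a set in $U$ --- and take the definable ultrapower $\prod^{\mathrm{def}}_U M$; a {\L}o\'s-type theorem, valid thanks to full $\La$-induction, then delivers the elementary end-extension directly, the class of the identity function being the generic element above $M$.)
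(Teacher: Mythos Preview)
The paper does not prove this theorem: it is stated as a background result and used without argument, with a reference to Kaye's textbook for the surrounding material. There is therefore no ``paper's own proof'' to compare against.

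Your proposal is correct and is in fact the standard proof. The Gaifman minimal-type construction (equivalently, the definable ultrapower by a minimal definable ultrafilter) is exactly how MacDowell--Specker is usually established in the setting of $\PA$-like theories in a countable language with full induction; see, e.g., Kaye's \emph{Models of Peano Arithmetic}, Chapter~8, which is the reference the paper points to. Your identification of the key role of full induction is accurate on both counts: it supplies definable Skolem functions (so that the definable closure $N=\mathrm{dcl}^{M^*}(M\cup\{a\})$ is an elementary substructure via Tarski--Vaught), and it drives the least-number-principle/overspill argument needed to push through the minimality step at each stage. The cardinality computation is fine since $M$ is infinite. One cosmetic remark: in the minimality dichotomy you might phrase the second alternative as ``$f(x,\bar b)$ is unbounded on the current large set,'' which is how it is typically secured at stage~$n$; the conclusion that $f(a,\bar b)>d$ for all $d\in M$ then follows in $M^*$. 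But this is presentation, not substance.
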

In fact, if we restrict ourselves to countable models, it is enough to assume that the model satisfies collection. This was proved in \cite{keisler}, Theorem 28. For a general overview of model theory of collection scheme, see \cite{enayatmohsenipour} (where this result occurs as a part of Theorem 1.2 in a more general context of models with a linear order).
\begin{theorem}[Keisler] \label{th_keisler_kolekcja}
	Let $M$ be a countable model over a countable language containing the language of arithmetic. Suppose that $M$ satisfies full collection scheme for that language.  Then there exists an elementary extension $N \succ M$ such that $M$ and $N$ have the same cardinality and for every $a \in N \setminus M$ and every $b \in M$, $N \models a >b$.
\end{theorem}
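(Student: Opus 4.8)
The plan is to realize the extension via the Omitting Types Theorem, with collection entering exactly where MacDowell--Specker uses full induction. Work in the countable language $\mathcal{L}^* = \mathcal{L}\cup\{c\}\cup\{\bar a\mid a\in M\}$ and set $T^* := \ElDiag(M)\cup\{\bar a < c\mid a\in M\}$, which is consistent by compactness since $M$ has no largest element. Any model of $T^*$ is a proper elementary extension of $M$ (with $M$ identified with the interpretations of the constants $\bar a$, and $c$ interpreted above all of $M$); the whole issue is to make it an \emph{end}-extension. For this, for each $m\in M$ I introduce the set of formulas
\begin{displaymath}
	r_m(v) \ = \ \{v \le \bar m\} \ \cup \ \{v \ne \bar{m'} \mid m'\in M,\ m'\le m\},
\end{displaymath}
which says that $v$ is a ``new element below $m$''. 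If a (countable) model $N^*\models T^*$ omits every $r_m$, then in the $\mathcal{L}$-reduct $N$ any element lying below some $b\in M$ must equal $\bar{m'}$ for some $m'\le b$, hence lies in $M$; so $N$ is an end-extension, it is proper because $c^{N^*}\notin M$, we have $M\prec N$ because $N^*\models\ElDiag(M)$, and $|N| = |M| = \aleph_0$. The Omitting Types Theorem then finishes the proof, provided each $r_m$ is non-principal over $T^*$ (the substantive case being nonstandard $m$).

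The heart of the argument is the verification that $r_m$ is non-principal over $T^*$, and this is where I would use collection. Suppose toward a contradiction that some $\mathcal{L}^*$-formula isolates $r_m$; since the only symbol beyond $\mathcal{L}$ and $v$ available besides constants from $M$ is $c$, it has the form $\psi(v,c)$ for an $\mathcal{L}$-formula $\psi(v,z)$ with parameters from $M$. Isolation means $T^*\vdash\exists v\,\psi(v,c)$, $T^*\vdash\forall v(\psi(v,c)\to v\le\bar m)$, and $T^*\vdash\forall v(\psi(v,c)\to v\ne\bar{m'})$ for every $m'\le m$. Pushing each of these through compactness and the deduction theorem (generalizing on the fresh constant $c$) turns them into facts about $M$: there are $m_1,m_2\in M$ with $M\models\forall z>m_1\,\exists v\,\psi(v,z)$ and $M\models\forall z>m_2\,\forall v(\psi(v,z)\to v\le m)$, and for each $m'\le m$ there is a threshold $z_0$ with $M\models\forall z>z_0\,\forall v(\psi(v,z)\to v\ne m')$. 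Writing $\chi(m',z_0)$ for this last $\mathcal{L}$-formula, which is upward closed in $z_0$, we get $M\models\forall m'\le m\,\exists z_0\,\chi(m',z_0)$ --- a statement over the \emph{bounded} domain $\{m'\mid m'\le m\}$, so the relevant instance of $\Coll$ yields a single bound $B$ with $M\models\forall m'\le m\,\chi(m',B)$, i.e. $M\models\forall z>B\,\forall v(\psi(v,z)\to v>m)$. Combined with the consequence of the second clause, for $z>\max(m_2,B)$ no $v$ satisfies $\psi(v,z)$, contradicting the consequence of the first clause for $z>\max(m_1,m_2,B)$.

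The main obstacle --- and the reason MacDowell--Specker's hypothesis of full induction can be weakened here all the way to collection --- is precisely this last step: without induction for $\mathcal{L}$-formulas one cannot minimize the threshold $z_0$ as a definable function of $m'$ and take its supremum directly, but collection over the bounded index set $\{m'\le m\}$ delivers the uniform bound $B$ regardless. (Correspondingly, this non-principality can genuinely fail for models of $\PA$ lacking collection, which is consistent with Smith's examples of models of $\CT^-$ with no proper end-extension.) The remaining ingredients are routine: consistency of $T^*$, the applicability of the Omitting Types Theorem to a countable language with a countable family of non-principal sets of formulas, and the bookkeeping that identifies the resulting $N$ as a proper elementary end-extension of $M$ of the same cardinality.
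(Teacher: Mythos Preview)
The paper does not prove this theorem; it merely cites Keisler's original (\cite{keisler}, Theorem 28) and the survey \cite{enayatmohsenipour}. So there is nothing in the paper to compare your argument against, and your omitting-types route is a perfectly standard way to obtain the result.

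There is one slip worth flagging. Isolation of $r_m$ by $\psi(v,c)$ requires only that $T^* \cup \{\exists v\,\psi(v,c)\}$ be \emph{consistent}, not that $T^* \vdash \exists v\,\psi(v,c)$; since $T^*$ is not complete (it leaves undetermined, for instance, the parity of $c$), these are genuinely different, and your derivation of the threshold $m_1$ via compactness from provability does not go through. Fortunately $m_1$ is unnecessary: from $m_2$ and the collection-produced bound $B$ you already have $M \models \forall z > \max(m_2,B)\,\forall v\,\neg\psi(v,z)$, and since any model of $T^*$ is an elementary extension of $M$ in which $c > \max(m_2,B)$, such a model satisfies $\forall v\,\neg\psi(v,c)$. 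This directly contradicts the consistency of $T^* \cup \{\exists v\,\psi(v,c)\}$, so $r_m$ is non-principal after all. With this small repair the argument is complete.
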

If $N$ is an extension of $M$ such that every new element in $N$ is greater than all elements in $M$, then $N$ is called an \df{end-extension} of $M$. If $N$ is an end-extension of $M$, this is denoted by $M \subset_e N$ or $M \prec_e N$ if it is, in addition, elementary. In effect, MacDowell--Specker theorem states that any theory in a countable language which extends $\PA$ and proves full induction scheme for its language has a proper elementary end-extension of the same cardinality.

By taking an arbitrary countable model $M$ of $\PA$, taking elementary end-extensions, and taking unions in the limit steps, we can construct a model $M' \succ_e M$ which has cardinality $\aleph_1$, but whose all initial segments are countable. Such models are called \df{$\omega_1$-like models}. In a similar manner, we can define $\kappa$-like models for an arbitrary cardinal $\kappa$.
 Note that if $M$ is a $\kappa$-like model for a regular $\kappa$, then it must have cofinality $\kappa$ which means that every subset $A \subset M$ of cardinality less than $\kappa$ is bounded.

An easy argument shows that if $M \subset_e N \models \PA$ are nonstandard, then $M$ and $N$ have exactly the same standard systems.
 In particular, if $M \preceq_e N$ are countable and recursively saturated, then by Paris--Friedman Theorem (Theorem \ref{th_paris_friedman}), $M \simeq N$. Moreover, this also holds if $M,N$ are models of some countable theory extending $\PA$ which has full induction.
%%%%%%%%% BW: być może referencje do powyższego faktu albo od razu do Parisa--Friedmana w powyższej formie.

\subsection{Truth}

This paper deals with compositional truth theories. Let us now introduce some of them. A systematic treatment can be found in \cite{halbach}. See also \cite{cies_ksiazka}, where the reader can find more information on arithmetic strength of classical compositional truth theories. We will \emph{not} begin with the most canonical example called $\CT^-$ in which the truth predicate $T$ is compositional for arithmetical formulae, but rather with its generalisation which plays a crucial technical role in our paper. 

\begin{definition} \label{def_ct_restr}
	By $\CT^-{\res}X$, we mean a theory in the language of second-order arithmetic containing arithmetical symbols of $\LPA$, a unary predicate $T$ and the membership relation $x \in X$ between first-order elements and sets. To the axioms of $\PA$, we add the following formulae containing a free second-order variable $X$:
	\begin{enumerate}
		\item $\forall x \ \ T(x)\rightarrow x \in \Sent_{\LPA} \wedge \dpt(x) \in X$
		\item $\forall s,t \in \ClTerm_{\LPA} \ \ T(s=t) \equiv (\val{s} = \val{t}).$
		\item $\forall \phi \in \Sent_{\LPA}  \ \ \Big(\dpt(\neg \phi) \in X \rightarrow T\neg \phi \equiv \neg T \phi \Big).$
		\item $\forall \phi, \psi \in \Sent_{\LPA}  \ \ \Big(\dpt(\phi \vee \psi) \in X \rightarrow T (\phi \vee \psi) \equiv T \phi \vee T \psi \Big).$
		\item $ \forall \phi \in \form^{\leq 1}_{\LPA} \ \ \Big(\dpt(\exists v \phi) \in X \wedge \FV(\phi) \subseteq \{v\} \rightarrow T \exists v \phi \equiv \exists x T \phi[\num{x}/v] \Big).$
		\item $\forall \bar{s},\bar{t} \in \ClTermSeq_{\LPA} \forall \phi \in \form_{\PA} \ \ \bigl(\val{\bar{s}} = \val{\bar{t}} \rightarrow T\phi(\bar{t}) \equiv T \phi(\bar{s})\bigr)$.
	\end{enumerate}
\end{definition}

Hence, $\CT^-{\res}X$ states that $T$ is a compositional truth predicate which behaves well on formulae whose \emph{syntactic depth} is in $X$. Moreover, by condition 1. no other formulae are within the range of $T$. Formally, models for $\CT^-{\res} X$ are models for the language $\LPA\cup\{T\}$ with an extra assignment for $X$. We often employ standard model-theoretic conventions and write the interpretation of $X$ in place of the variable $X$, for example $(M,T)\models \CT^-{\res}M$. In practice, we shall also treat $X$ as an additional predicate and write models for $\CT^-{\res}X$ in the form $(M,T,X)$. We will also be writing $(M,T) \models \CT^- \res X$ for $X \subset M$ with the obvious meaning. We will essentially use the notation $\CT^- \res X$ in one context: when $X$ is a  nonstandard initial segment (possibly with the largest element). 

By $\CT {\res}X$, we mean $\CT^-{\res}X$ along with the full induction scheme for the extended language (i.e. formulae which may use the predicate $T$ and the free variable $X$).
By $\CT^-$ we mean  a theory of an unrestricted compositional truth predicate (i.e., we do not restrict axioms of $\CT^- \res X$ to formulae from the set $X$). By $\CT$ we mean $\CT^-$ with full induction.

In our proof, we will need to impose an unpleasantly technical regularity condition on the truth predicates. Essentially, we want to consider truth predicates which only see syntactic trees of considered formulae and the values of terms which we plug in rather than specific terms and specific variables over which we quantify. 

First, we introduce the notion of the structural template.
\begin{definition} \label{def_structural_template}
	 If $\phi \in \form_{\LPA}$, we say that $\widehat{\phi}$ is its \df{structural template} iff 
	\begin{itemize}
	  	\item No constant symbol occurs in $\widehat{\phi}$.
		\item No free variable occurs in $\widehat{\phi}$ twice.
		\item For every term $t$ occurring in $\widehat{\phi}$, if all variables in $t$ are free, then $t$ is a free variable.
		\item No variable occurs in $\widehat{\phi}$ both as a bounded and as a free variable. 
		\item The formula $\phi$ can be obtained from $\widehat{\phi}$ by renaming bounded variables and substituting terms for free variables in such a way that no variable appearing in those terms becomes bounded.
		\item $\widehat{\phi}$ is the smallest formula with those properties.
	\end{itemize}
We say that formulae $\phi, \psi$ are \df{structurally similar}, $\phi \sim \psi$ iff $\widehat{\phi} = \widehat{\psi}.$
\end{definition}
\begin{example}
	If $\phi = \forall x \exists x \forall z \ (x+ S0) \times (((y \times S0) + 0) + (y +z) ) = (z \times (x + S0)) \times S y $, then
	\begin{displaymath}
	\widehat{\phi} = \forall w_1 \exists w_1 \forall w_2 \ (w_1+ v_0) \times (v_1 + (v_2 + w_2)) = (w_2 \times (w_1 + v_3)) \times v_4,
	\end{displaymath}
	where $w_i, v_i$ are chosen so as to minimise the formula. 
\end{example}
\begin{example}
	\begin{enumerate}
		\item 	The formulae $\phi_1 = \exists x \forall x \ x+y =0$ and $\phi_2 = \exists y \forall y \ y+x = z \times SSS0$ are structurally similar.
		\item The formulae $\psi_1 = \exists x \forall y \ x+y = 0$ and $ \psi_2 = \exists y \forall y \  x+y= 0$ are not structurally similar, because the first quantifies over two distinct variables and the second does not. 
		\item The formulae $\eta_1 = \exists x \forall y \ x+y = 0$ and $ \eta_2 = \exists x \forall y \  x+y= y$ are not structurally similar because in the first one, the universally quantified variable occurs only once under the scope of the quantifier. 
	\end{enumerate}
\end{example}
By induction on the complexity of formulae, one can easily check that any formula has its structural template. By minimality, it is unique. Now, we are able to define our desired notion.

\begin{definition} \label{def_structural_equivalence}
	Let $\phi, \psi$ be two sentences. We say that they are \df{structurally equivalent} iff they are structurally similar and there exist two formulae $\phi^*, \psi^*$ which differ from $\widehat{\phi}$ by renaming bounded variables, and sequences $\bar{s}, \bar{t}$ of closed terms such that $\bar{\val{s}} = \bar{\val{t}}$ (i.e., all the terms in the sequence have the same values) for which $\phi = \phi^*(\bar{t})$ and $\psi = \psi^*(\bar{s})$.
	
	We denote this relation with $\phi \approx \psi$.
\end{definition}

\begin{example}
	Suppose that $\phi = \exists x \ x+S0 = 0 + S0$ and $\psi = \exists y \ y + (S0 + 0)\times S0 = S0.$ Then $\phi \approx \psi.$
\end{example}

\begin{definition} \label{def_structural regularity}
	By the \df{structural regularity principle} ($\StrReg$) we mean the following axiom:
	\begin{displaymath}
	\forall \phi, \psi \in \Sent_{\LPA} \Big(\phi \approx \psi \rightarrow T \phi \equiv T \psi \Big).
	\end{displaymath}
\end{definition}
In what follows, we will essentially work with the theory $\CT^- + \StrReg$. One of the fundamental results in the theory of truth states that $\CT^-$, the theory of compositional truth predicate, does not prove any new arithmetical theorems:
\begin{theorem}[Kotlarski--Krajewski--Lachlan] \label{th_kkl}
	$\CT^-$ is conservative over $\PA$.
\end{theorem}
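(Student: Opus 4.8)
The plan is to recast conservativity model-theoretically and then construct a truth predicate by hand. It is enough to prove: \emph{every $M\models\PA$ has an elementary extension $N\succeq M$ that carries a compositional truth predicate}, i.e.\ with $(N,T)\models\CT^-$ for a suitable $T\subseteq N$. Granting this, if $\sigma$ is an arithmetical sentence with $\CT^-\vdash\sigma$, then $N\models\sigma$, hence $M\models\sigma$ because $M\preceq N$; as $M$ ranges over all models of $\PA$, completeness yields $\PA\vdash\sigma$. One may equally aim for the variant \emph{every countable recursively saturated $M\models\PA$ expands to a model of $\CT^-$}, which also suffices, since $\PA+\neg\sigma$, when consistent, has a countable recursively saturated model, and such a model could then not be so expanded. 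The recursively saturated route is the one that lets Barwise--Schlipf--Ressayre (Theorem~\ref{th_bsr}) enter: once a satisfaction class is produced on \emph{some} elementary extension of $M$, a routine compactness argument shows $\CT^-$ consistent with the elementary diagram of $M$, and then Theorem~\ref{th_bsr} applies --- here one uses that $\CT^-$ is a computable theory, being $\PA$ together with finitely many additional axioms.

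For the construction itself I would run the elementary-chain method of Enayat and Visser. One builds a chain $M=M_0\preceq M_1\preceq M_2\preceq\cdots$ of models of $\PA$, together with coherent \emph{partial} compositional truth classes $T_n\subseteq\Sent_{\LPA}^{M_n}$ with $T_n\subseteq T_{n+1}$, whose domains are closed under immediate subsentences --- numeral instances of quantified sentences included --- and on which the compositional clauses of Definition~\ref{def_ct_restr} hold, arranged so that \emph{every} sentence coded in $M_n$ enters the domain of $T_{n+1}$. Passing to the limit, $N:=\bigcup_n M_n$ is an elementary extension of $M$, and $T:=\bigcup_n T_n$ is total on $\Sent_{\LPA}^N$, since every sentence coded in $N$ is already coded in some $M_n$. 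Because each instance of the compositional clauses is a \emph{local} assertion --- involving only one sentence, its immediate subsentences, and finitely many parameters --- it is preserved once it becomes true; hence $(N,T)\models\CT^-$.

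The real content, and the principal obstacle, is the one-step extension lemma driving the chain: from $(M,T)$ with a partial compositional truth class and a sentence $\phi$ of possibly nonstandard syntactic depth outside the current domain, one must produce $M\preceq M'$ and a partial compositional truth class $T'\supseteq T$ on $M'$ whose domain contains $\phi$, and ideally all of $\Sent_{\LPA}^M$, after which the successive choices must be reconciled coherently. The delicate point is the existential clause: admitting $\exists v\,\psi$ into the domain forces in every numeral instance $\psi[\num{x}/v]$, with $x$ now ranging over the larger model $M'$, so one is really asserting the consistency of a theory that commits the predicate on an unbounded family of fresh sentences simultaneously. Establishing this consistency is exactly the combinatorial heart of the Kotlarski--Krajewski--Lachlan theorem; the original proof achieves it by a cut-reduction analysis of ``$M$-logic'', and the Enayat--Visser treatment by a direct construction of the required model. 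I would adopt one of these routes, arranging along the way that the partial classes be structurally regular, so that the very same argument delivers conservativity of $\CT^-+\StrReg$, the theory actually used in the sequel. A purely proof-theoretic alternative --- approximating $T$ by finite disquotational subtheories and invoking a partial cut-elimination, following Leigh --- is also available.
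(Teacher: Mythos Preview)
The paper does not prove Theorem~\ref{th_kkl}; it is stated as a classical result and attributed to Kotlarski--Krajewski--Lachlan without argument, so there is no ``paper's own proof'' to compare against. Your outline is a correct sketch of the Enayat--Visser elementary-chain construction, which is exactly the method the paper adopts in the Appendix when proving the strengthened Lemma~\ref{lem_enayat_visser_z_kolekcja} (and which it cites as the source of the modern proof of Theorem~\ref{th_kkl}). In that sense you have supplied more than the paper does for this particular statement, and along the same route the paper favours.

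One small calibration: in the Enayat--Visser argument as the paper runs it, the step is not to admit a single new sentence $\phi$ into the domain, but to pass from a satisfaction predicate $S_j$ compositional on all formulae of $M_{j-1}$ to an $S_{j+1}$ compositional on all formulae of $M_j$; the consistency of the step is then checked by compactness, interpreting $S_{j+1}$ on any \emph{finite} set of formulae by an explicit recursion on their syntactic structure inside $M_j$ (this is Lemma~\ref{lem_niesprzecznosc_teoryj_EV}). Your framing in terms of a single $\phi$ at a time slightly understates what each step accomplishes and makes the ``unbounded family of fresh sentences'' obstacle look harder than it is --- the point is precisely that only finitely many formula templates need be handled at once, and for those an interpretation can be written down directly. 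But the substance of your sketch is right.
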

On the other hand, as proved in \cite{lachlan}, the presence of $\CT^-$ has nontrivial consequences on the model-theoretic side.
\begin{theorem}[Lachlan] \label{th_lachlan}
	If $(M,T) \models \CT^-$, then $M$ is recursively saturated. Moreover, the same holds for $\CT^-{\res}[0,c]$ for every nonstandard $c\in M$.
\end{theorem}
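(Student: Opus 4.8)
The plan is to show that $M$ realises every finitely satisfiable recursive type; since $\CT^-$ contains $\PA$ this yields recursive saturation of $M$. So fix a recursive type $p(v)=\{\phi_i(v,\bar a):i\in\omega\}$ over $M$ which is finitely satisfiable. Closing $p$ under conjunctions and coding the parameters into one, I may assume there is a single parameter $a$, that each $\phi_{i+1}$ is $\phi_i$ together with one further conjunct, and that every $\phi_i$ has free variables among $v,w$.

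The first ingredient is a \emph{disquotation lemma}: for every \emph{standard} $\psi(v,w)\in\form_{\LPA}$,
\[
\CT^-\vdash\forall x\,\forall y\,\bigl(T(\psi[\num x/v,\num y/w])\leftrightarrow\psi(x,y)\bigr),
\]
proved by external induction on the build-up of $\psi$ — the atomic case being clause~2 of Definition~\ref{def_ct_restr} together with $\PA\vdash\val{\num x}=x$, and the inductive steps using clauses 3--5. Since the induction invokes compositionality only for formulae of \emph{standard} syntactic depth, the lemma holds verbatim for $\CT^-\res[0,c]$ with any nonstandard $c$. In particular, for standard $i$ and any $b\in M$, $(M,T)\models T(\phi_i[\num b/v,\num a/w])$ iff $M\models\phi_i(b,a)$.

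Next I would push the enumeration into $M$. As $i\mapsto\qcr{\phi_i(v,w)}$ is primitive recursive and provably total in $\PA$ — which also proves that its values are codes of formulae with free variables among $v,w$, each $\phi_{i+1}$ extending $\phi_i$ by a conjunct — it is represented by an $\LPA$-definable function of $M$; write $\phi_c$ for its value at $c\in M$ and set $\sigma_c:=\exists v\,\phi_c(v,\num a)$. Put $Y:=\{c\in M:(M,T)\models T(\sigma_c)\}$. Two things are straightforward: (i) $\omega\subseteq Y$, by the disquotation lemma and the finite satisfiability of $p$; and (ii) if $Y$ contains a nonstandard element $c$, then $M$ realises $p$ — for by clause~5 there is $b$ with $T(\phi_c[\num b/v,\num a/w])$, and peeling off standardly many conjuncts via the compositional clauses gives $T(\phi_i[\num b/v,\num a/w])$, hence $M\models\phi_i(b,a)$, for every standard $i$. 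Combining (i) and (ii) reduces the whole theorem to showing that $Y\neq\omega$ (then $Y\setminus\omega\neq\emptyset$, and any element of it is nonstandard). For the restricted theory $\CT^-\res[0,c_0]$ one replaces $Y$ by $Y\cap\{c:\dpt(\sigma_c)\le c_0\}$; since the depths $\dpt(\phi_i)$ grow only recursively in $i$, this set still contains $\omega$ and the rest goes through unchanged.

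So the one essential point — and the step I expect to be the main obstacle — is that
\[
Y\text{ is not }\omega .
\]
Here the routine manipulations run out: $Y$ is defined by a formula mentioning $T$, whereas $\CT^-$ (and a fortiori $\CT^-\res[0,c_0]$) proves induction only for purely arithmetical formulae, so one cannot apply overspill to $Y$. Equivalently, were $Y=\omega$ the standard cut would be definable in $(M,T)$ by the formula $T(\sigma_v)$ with parameter $a$, and the task becomes to contradict the definability of $\omega$ in a model of full compositional truth — the genuinely non-trivial point, for which I would invoke Lachlan's argument rather than the elementary considerations above.
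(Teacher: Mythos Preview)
The paper does not prove this theorem; it is stated with attribution and a citation to \cite{lachlan}, so there is no in-paper argument to compare against.

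Your reduction is the standard one and is correct as far as it goes: the disquotation lemma for standard formulae, the extension of the recursive enumeration $i\mapsto\qcr{\phi_i}$ to a nonstandardly long $M$-coded sequence, the inclusion $\omega\subseteq Y$, and the implication from ``$Y$ has a nonstandard element'' to ``$p$ is realised'' are all fine; so is the observation that $\CT^-\res[0,c_0]$ is handled identically because only standard-depth instances of compositionality are invoked.

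The gap is exactly where you place it, but it is the entire substance of the theorem. Saying you would ``invoke Lachlan's argument'' to conclude $Y\neq\omega$ is circular: that \emph{is} what you are trying to prove. Your reformulation --- that $Y=\omega$ would make the standard cut definable in $(M,T)$ --- is a correct restatement but not a reduction to anything easier: $\CT^-$ carries no induction for formulae containing $T$, so there is no overspill in the expanded language and no general principle forbidding the standard cut from being $T$-definable. Lachlan's actual argument does not appeal to any such principle; it exploits the specific syntactic relationship among the $\sigma_c$ and the compositional clauses to manufacture, from the $M$-coded sequence $(\phi_c)_{c\le d}$, a single nonstandard formula whose analysis under $T$ --- using only the $\CT^-$ axioms, no induction on $T$ --- produces a realisation of the type. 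Until that construction (which is short but genuinely clever) is supplied, what you have written is the routine set-up, not a proof.
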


By a simple compactness argument, using arithmetical partial truth predicates (say, for $\Sigma_n$ classes, as every formula in $\Sigma_n$ has depth $\leq n$), one can show that the theory of the restricted compositional truth predicate is still conservative also when we consider the fully inductive variant.
\begin{theorem}
	The theory saying "$I$ is a nonstandard initial segment and $\CT{\res}I$"  is conservative over $\PA$.
\end{theorem}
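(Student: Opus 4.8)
The plan is to carry out the compactness argument announced just above the statement; the result is genuinely easy, which is precisely its point (contrast with the local collection scheme). Write $\mathcal{T}$ for the theory in question, made precise as follows: its language is $\LPA \cup \{T,X\}$ with $X$ a free set variable, and its axioms are those of $\PA$, the six axioms of $\CT^-{\res}X$ from Definition~\ref{def_ct_restr}, the sentence ``$X$ is downward closed'' (that is, $\forall x\,\forall y\,(x \in X \wedge y \leq x \rightarrow y \in X)$), the full induction scheme for $\LPA \cup \{T,X\}$, and the scheme $\{\num{n} \in X : n \in \omega\}$ expressing that $X$ is nonstandard. Conservativity over $\PA$ amounts to: for every $\sigma \in \Sent_{\LPA}$ with $\PA \nvdash \sigma$, the theory $\mathcal{T} + \neg\sigma$ is consistent. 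By compactness it suffices that every finite $\Delta \subseteq \mathcal{T} + \neg\sigma$ has a model; such a $\Delta$ uses only finitely many axioms of $\PA$, finitely many instances $\num{n} \in X$ (all with $n \leq N$, say, for a standard $N$), finitely many instances of induction for formulae of $\LPA \cup \{T,X\}$, $\neg\sigma$, and the remaining finitely many axioms.

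I would then build a model of $\Delta$ from an arithmetical partial truth predicate. Fix $M \models \PA + \neg\sigma$, which exists since $\PA \nvdash \sigma$. It is standard (see e.g.\ \cite{kaye}) that for each fixed standard $N$ there is an $\LPA$-formula $\Tr_{\leq N}(x)$ for which $\PA$ proves that $\Tr_{\leq N}$ holds exactly of the true sentences of syntactic depth at most $N$ and is compositional on that class; equivalently, $\PA$ proves that the pair $(T,X) := (\Tr_{\leq N},\,[0,\num{N}])$ satisfies clauses 1--6 of Definition~\ref{def_ct_restr}. (Among the compositional clauses the only one without an explicit depth bound is clause 6; it holds because substituting a closed term for a variable leaves the syntactic depth unchanged, so both sides of the equivalence are false when the depth exceeds $N$, and agree by correctness of $\Tr_{\leq N}$ otherwise.) Put $J := [0,\num{N}]^{M} = \{0,1,\dots,N\}$ and consider the structure $(M,\Tr_{\leq N}^{M},J)$. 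By the above it satisfies $\PA$ and the axioms of $\CT^-{\res}X$; moreover $J$ is downward closed and contains $\num{n}$ for every $n \leq N$, and $M \models \neg\sigma$. Finally it satisfies full induction for $\LPA \cup \{T,X\}$: since $T$ and $X$ are there interpreted by $\LPA$-formulae with the single standard parameter $N$, every formula of $\LPA \cup \{T,X\}$ becomes, under that interpretation, an $\LPA$-formula with the extra parameter $N$, so all induction instances follow from induction in $M \models \PA$. Hence $(M,\Tr_{\leq N}^{M},J) \models \Delta$, so $\mathcal{T} + \neg\sigma$ is finitely satisfiable, hence consistent.

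There is essentially no obstacle; the only delicate points are bookkeeping ones. First, ``$X$ is nonstandard'' must be expressed by the scheme $\{\num{n} \in X\}$ rather than by a single first-order sentence, and one must not strengthen the theory by ``$X$ has no largest element'': a proper cut with no greatest element is never definable, so the step instantiating $X$ by the definable finite segment $[0,\num{N}]$ would fail -- which is exactly why the formalisation deliberately permits $X$ to have a largest element. Second, one should keep in mind the (standard) fact that $\PA$ itself proves the compositional clauses for $\Tr_{\leq N}$ for each concrete standard $N$; the whole argument breaks for a nonstandard bound, since no single $\LPA$-formula is a truth predicate for all sentences of nonstandard depth -- which is of course why $\CT^-$, and a fortiori $\CT$, is not trivially conservative.
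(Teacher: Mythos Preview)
Your argument is correct and is exactly the compactness-plus-partial-truth-predicates route the paper sketches in the sentence preceding the theorem; the paper gives no further details beyond that one-line hint, so your write-up is in fact more complete than the original. Your care about formalising ``nonstandard'' as the scheme $\{\num{n}\in X\}$ and allowing $X$ to have a greatest element is well placed and matches the paper's conventions for $\CT^-{\res}[0,c]$.
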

Recall that $\CT{\res}I$ is simply a theory of truth predicate which satisfies full induction and compositional conditions for formulae whose \emph{depth} is in an unspecified initial segment $I$. The obvious common strengthening of the two theories, i.e. $\CT$, is much stronger than $\PA$. For example, arguing by induction on the length of proofs, we can easily see that $\CT$ proves the consistency of $\PA$.
\begin{theorem}
	$\CT$ is not conservative over $\PA$.
\end{theorem}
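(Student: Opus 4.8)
The plan is to show that $\CT$ proves the purely arithmetical sentence $\Con(\PA)$. Since $\Con(\PA)$ is not provable in $\PA$ (by G\"odel's second incompleteness theorem, granting that $\PA$ is consistent), this immediately witnesses non-conservativity.

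So fix a model $(M,T) \models \CT$ and argue inside it. Suppose towards a contradiction that $M \models \neg\Con(\PA)$, i.e.\ there is (a code of) a $\PA$-proof $d$ of the sentence $0 = S0$; here $d$ is a finite sequence of formulae, possibly nonstandard. The heart of the matter is the following claim, which is proved by induction on the position $i$ of a formula in $d$: for every such $i$, writing $\psi_i$ for the $i$-th formula of $d$, and for every assignment $\alpha$ with $\FV(\psi_i) \subseteq \dom(\alpha)$, we have $T(\psi_i[\alpha])$. This induction is legitimate precisely because $\CT$ includes the full induction scheme for the language with $T$: the inductive statement mentions $T$ essentially.

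To run the induction one checks, using the compositional clauses of $\CT^-$ together with $M \models \PA$: \emph{(i)} if $\psi_i$ is a logical axiom then $T(\psi_i[\alpha])$ — propositional tautologies are handled by clauses 3 and 4 (each atom becomes, under $\alpha$, a closed equation decided correctly by clause 2, and $T$ respects the Boolean structure), the equality axioms by clause 2, and the quantifier axioms $\forall v\,\phi \to \phi(t)$ by clause 5, applied after substituting $\alpha$ so that only $v$ remains free, together with clauses 2 and 6 governing the values of closed terms and the invariance of $T$ under replacing a term by another of the same value; \emph{(ii)} if $\psi_i$ is a nonlogical axiom of $\PA$ then $T(\psi_i[\alpha])$ — for the finitely many Robinson axioms this is a direct computation from the compositional clauses, while for an arbitrary (possibly nonstandard) instance $I_\phi$ of the induction scheme one reduces $T(I_\phi)$, by compositionality, to an ordinary induction for the set $\{\,b \in M : T(\phi(\num{b},\num{\bar{a}}))\,\}$, where $\bar a$ codes the values of the side variables, and here one invokes the induction scheme of $\CT$ for the $T$-formula defining this set; \emph{(iii)} if $\psi_i$ follows from earlier lines by modus ponens or generalisation, then $T(\psi_i[\alpha])$ follows from the inductive hypothesis via clauses 3, 4 and 5. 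Applying the claim to the last line of $d$ gives $T(0 = S0)$, contradicting clause 2 since $\val{0} \neq \val{S0}$ in $M$. Hence $M \models \Con(\PA)$, so $\CT \vdash \Con(\PA)$, and we are done.

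The main obstacle — indeed the only point that is not entirely mechanical — is step \emph{(ii)} for the induction scheme: one has to arrange the reduction of $T(I_\phi)$ to a genuine induction so that the inductive set is a single $T$-definable subset of $M$ (with parameters for the suppressed side variables and for the witnessing numerals), and this is exactly the place where induction for formulae containing $T$ is unavoidable — consistently with the fact that over $\CT^-$ alone one cannot carry this out. A secondary nuisance is the bookkeeping with assignments in step \emph{(i)}: the compositional clause for $\exists$ is stated only for formulae with at most one free variable, so one must substitute numerals for all variables except the quantified one to reach that case, and clause 6 is needed to absorb the mismatch between substituting a term and substituting the numeral naming its value.
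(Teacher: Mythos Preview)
Your proposal is correct and takes essentially the same approach as the paper: the paper simply remarks that ``arguing by induction on the length of proofs, we can easily see that $\CT$ proves the consistency of $\PA$,'' and your argument is precisely a careful unpacking of this one-line sketch, including the key observation that handling (possibly nonstandard) induction axioms is exactly where induction for $T$-formulae is needed.
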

More generally, using essentially the same proof, we can show the following
\begin{theorem}\label{ubi prawda ibi konsystencja}
	If $(M,T)\models \CT\res[0,c]$, then every proof of contradiction of $\PA$ in $M$ contains a formula of depth $>c$. 
\end{theorem}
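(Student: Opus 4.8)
The plan is to mimic the classical argument that $\CT$ proves $\Con(\PA)$, but carefully tracking the syntactic depth of the formulae that actually enter the induction. Fix $(M,T)\models\CT\res[0,c]$ and, towards a contradiction, suppose $d\in M$ is (a code of) a proof of $0=1$ from the axioms of $\PA$ in which every formula appearing has depth $\leq c$. The key object is the set $A=\{i<\mathrm{lh}(d) : T(\phi_i)\}$, where $\phi_i$ is the $i$-th line of $d$; this set is definable using $T$ with parameter $d$, so it falls under the full induction scheme available in $\CT\res[0,c]$. I would then argue by induction on $i<\mathrm{lh}(d)$ that $i\in A$, i.e.\ that every line of $d$ is true.

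The inductive step splits by cases on how $\phi_i$ is justified in $d$. If $\phi_i$ is a logical axiom or follows by a propositional rule (say modus ponens) or the quantifier rules from earlier lines, then the compositional clauses of $\CT^-\res[0,c]$ applied to $\phi_i$ and its subformulae give $T(\phi_i)$ from the truth of the premises — and here is where the depth bound is essential: every subformula of $\phi_i$, and every formula $\exists v\,\psi$, $\neg\psi$, $\psi\vee\chi$ whose compositional clause we invoke, has depth $\leq\dpt(\phi_i)\leq c$, so clauses 1--6 of Definition \ref{def_ct_restr} are applicable with $X=[0,c]$. If $\phi_i$ is a non-logical axiom of $\PA$, I need to know $T(\phi_i)$ outright. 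For the finitely many axioms of Robinson's arithmetic this is a single statement provable in $\CT^-$ (one checks each, using compositionality and the term-value clause). For an instance of the induction scheme, one uses the standard fact that $\CT^-$ proves the truth of all arithmetical instances of induction: internal induction inside $M$ at the level of the truth predicate, again only invoking compositional clauses at depths $\leq c$ because the instance $\phi_i$ itself has depth $\leq c$.

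Once we have $T(\phi_i)$ for all $i<\mathrm{lh}(d)$, in particular $T(\qcr{0=1})$, which by clause 2 of Definition \ref{def_ct_restr} (the term-value clause, with $\dpt(\qcr{0=1})=0\in[0,c]$) forces $\val{\qcr 0}=\val{\qcr 1}$, i.e.\ $0=1$ in $M$, a contradiction. Hence no such $d$ exists, which is exactly the statement.

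The main obstacle is the case of the induction axioms: one must verify that $\CT^-\res[0,c]$, with induction available only for the expanded language and compositional clauses available only up to depth $c$, still proves the truth of every arithmetical induction instance $\phi_i$ of depth $\leq c$. The point is that the usual $\CT^-$-proof of "all induction instances are true'' proceeds by an internal induction (in the arithmetical sense, inside $M$) on a parameter, and only peels apart the fixed formula $\phi_i$ and its subformulae; since all of these have depth $\leq c$, every application of a compositional axiom is licensed by $X=[0,c]$, and the external induction used is over an $\LPA\cup\{T\}$-formula, hence covered by the induction scheme of $\CT\res[0,c]$. Making this bookkeeping precise — in particular checking that substitution of numerals, which is used in the quantifier clause, does not raise depth — is the technical heart of the argument, but it is entirely routine.
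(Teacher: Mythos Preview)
Your proposal is correct and follows exactly the approach the paper indicates: the paper gives no detailed proof of this theorem, merely noting it uses ``essentially the same proof'' as the standard argument that $\CT$ proves $\Con(\PA)$ by induction on the length of derivations, and in Section~4 spells out precisely your strategy (restrict to $T_c$, use the full induction available in $\CT\res[0,c]$ to show every line of the proof is true under all assignments). One small remark: lines of a Hilbert proof may contain free variables, so your set $A$ should really be $\{i<\mathrm{lh}(d):\forall\alpha\in\Val(\phi_i)\ T(\phi_i[\alpha])\}$ rather than $\{i:T(\phi_i)\}$, and what you call ``internal induction'' is in fact the \emph{external} induction of $\CT\res[0,c]$ applied to the formula $T(\psi[\num{x}])$---but you identify this correctly in your final paragraph, so this is purely a matter of phrasing.
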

Interestingly, we can conservatively add to $\CT^-$ some specific form of the induction scheme. By \df{internal induction} we mean the axiom:
\begin{equation*} \label{eq_int} \tag{$\INT$}
\forall \phi \in \form^{\leq 1}_{\LPA} \Bigl( T\phi(\num{0}) \wedge \forall x \ \ \left(T \phi(\num{x}) \rightarrow T \phi(\num{x+1}) \right) \rightarrow \forall x T \phi(\num{x}) \Bigr).
\end{equation*}
It essentially states that any set defined with a (possibly nonstandard) formula under the truth predicate satisfies the induction scheme. As we have already mentioned, the following holds:\footnote{The result is announced, but not really proved, in \cite{kkl} and \cite{enayatvisser2}. A complete proof occurs in \cite{leigh}.}
\begin{theorem}[Kotlarski-Krajewski-Lachlan] \label{th_ctminus_int_conservative}
	$\CT^- + \INT$ is conservative over $\PA$.
\end{theorem}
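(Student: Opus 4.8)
The plan is to establish the stronger model-theoretic statement that every countable recursively saturated $M \models \PA$ can be expanded to a model $(M,T) \models \CT^- + \INT$; conservativity over $\PA$ then follows by the routine compactness argument. Indeed, if there were an arithmetical $\sigma$ with $\CT^- + \INT \vdash \sigma$ but $\PA \nvdash \sigma$, then $\PA + \neg\sigma$ would be consistent and hence would have a countable recursively saturated model $M$; expanding $M$ to a model $(M,T)$ of $\CT^- + \INT$ yields $(M,T) \models \sigma$, contradicting $M \models \neg\sigma$, since the truth value of the arithmetical sentence $\sigma$ in $(M,T)$ is determined by the arithmetical reduct alone.

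To build the predicate $T$ over such an $M$, I would run the Kotlarski--Krajewski--Lachlan chain construction of a full compositional satisfaction class, augmented with a family of requirements that force $\INT$. Fix enumerations $(\phi_k)_{k\in\omega}$ of $\{a \in M : M \models \Sent_{\LPA}(a)\}$ and $(\theta_k)_{k\in\omega}$ of $\{a \in M : M \models \form^{\leq 1}_{\LPA}(a)\}$, and construct $T = \bigcup_s T_s$ as the union of an increasing chain of $M$-coded partial approximations, each locally compositional and satisfying the Kotlarski--Krajewski--Lachlan extendability invariant (roughly: it is, in the relevant type-theoretic sense, a finite fragment of a genuine compositional satisfaction class; this is precisely where recursive saturation of $M$ enters, via realising suitable recursive types, cf.\ Theorem~\ref{th_lachlan}). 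Across the stages one interleaves (a) \emph{compositional} requirements, forcing each value $T(\phi_k)$ together with everything dictated by the clauses of Definition~\ref{def_ct_restr}; (b) a \emph{totality} requirement ensuring that every $M$-sentence eventually receives a value; and (c) for each $k$, an $\INT$ requirement guaranteeing that the set $A_{\theta_k} := \{x : T\theta_k(\num{x})\}$ is inductive in the final model. Requirement (c) is met by, at the stage devoted to $\theta_k$, first extending the current approximation so as to decide the sentence $\forall v\,\theta_k(v)$: if it is declared true, the compositional clauses force $A_{\theta_k} = M$; if it is declared false, one makes a further extension committing either to $\neg T\theta_k(\num{0})$, or to some numeral $\num{a}$ with $T\theta_k(\num{a})$ and $\neg T\theta_k(\num{a+1})$ — in either case $A_{\theta_k}$ is inductive.

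The crux — and the only point going genuinely beyond the classical conservativity of $\CT^-$ (Theorem~\ref{th_kkl}) — is to show that requirement (c) is compatible with the extendability invariant: from any finite locally-compositional extendable approximation one must be able to pass to a further such approximation that enlarges the compositional domain \emph{and} forces one of the two disjuncts above for $\theta_k$. This amounts to checking that the auxiliary theory ``there is a compositional satisfaction class for the $\LPA$-formulas of depth below some nonstandard $c$, extending the finitely many current commitments, in which $\forall v\,\theta_k(v)$ is true, or else is false and accompanied by a numeral witness to the failure of its induction step'' is consistent with the elementary diagram of $M$; one verifies this by an overspill argument carried out inside $M$, using that for each \emph{standard} depth $d$ the corresponding partial truth predicate is $\LPA$-definable and provably well-behaved in $\PA$, and applying arithmetical induction in $M$ to that definable predicate. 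I expect this compatibility verification to be the main obstacle. An alternative and entirely proof-theoretic route is Leigh's cut-elimination argument \cite{leigh}: translate a $\CT^- + \INT$-derivation of an arithmetical sentence into a derivation in a Tait-style calculus over a generic partial-satisfaction predicate, eliminate all cuts, and observe that the surviving $T$-atoms mention only the finitely many formulas occurring in the original derivation, while the $\INT$-instances turn into ordinary arithmetical induction instances once the generic predicate is read off as an $\LPA$-definable partial satisfaction class.
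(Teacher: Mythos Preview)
The paper does not give its own detailed proof of this theorem: it cites Leigh's cut-elimination argument \cite{leigh} as the complete proof in the literature, and remarks (at the end of the Appendix) that the Enayat--Visser chain construction carried out there for Lemma~\ref{lem_enayat_visser_z_kolekcja} adapts with no nontrivial changes to yield $\INT$. So your ``alternative route'' via Leigh is in fact the reference the paper points to; your primary proposal, by contrast, is genuinely different from what the paper sketches.

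The Enayat--Visser route the paper has in mind does \emph{not} force $\INT$ requirement by requirement. Instead one builds an elementary chain $(M_j,T_j,X^j_\phi)_{\phi\in M_{j-1}}$, introducing at stage $j+1$ a satisfaction predicate $S_{j+1}$ compositional for formulae of $M_j$. The crux is the compactness step (Lemma~\ref{lem_niesprzecznosc_teoryj_EV}): to interpret $S_{j+1}$ on any \emph{finite} set of formulae $\phi_1,\ldots,\phi_n$, one defines it arithmetically from finitely many of the disintegration predicates $X^j_{\widehat{\phi_i}}$ already present. Since those predicates satisfy full induction by elementarity, every set of the form $\{\alpha:S_{j+1}(\phi_i,\alpha)\}$ is automatically inductive. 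Internal induction thus falls out for free from definability at each finite stage --- no witness-hunting is needed.

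Your KKL-style plan, forcing for each $\theta_k$ either $\forall v\,\theta_k$ or an explicit counterexample to the inductive step, is a reasonable idea, but the overspill you sketch for the compatibility check does not go through as written. The standard partial truth predicates $\Tr_d$ are indeed $\LPA$-definable and satisfy induction in $M$, but they cannot serve as witnesses extending your current approximation $T_s$: by stage $s$ the $M$-finite set $T_s$ will already contain decisions about sentences of \emph{nonstandard} depth, and no standard $\Tr_d$ speaks about those at all, so there is no standard $d$ for which ``$\Tr_d$ extends $T_s$ and handles $\theta_k$'' holds, and overspill has nothing to bite on. The genuine content of the compatibility step is precisely that one can re-choose the extension of $\theta_k$ (and its structural relatives) to be an \emph{arithmetically definable} set without disturbing extendability --- and once you formulate it that way you have essentially rediscovered the definability trick that makes the Enayat--Visser argument work. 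Your proposal is therefore not wrong in spirit, but the step you correctly identify as ``the main obstacle'' is not dispatched by the overspill you outline; the paper's route sidesteps it entirely.
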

The same result holds if we consider a theory $\CT^- + \INT + \StrReg$. We will not show it, but we will discuss the proof in the Appendix, since it is a slight modification of the proof of Lemma \ref{lem_enayat_visser_z_kolekcja}.
\begin{theorem} \label{th_ctminus_plus_int_plus_strreg_konserwatywne}
	$\CT^- + \StrReg + \INT$ is conservative over $\PA$.
\end{theorem}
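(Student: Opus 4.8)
The plan is to prove this in essentially the same way as Lemma~\ref{lem_enayat_visser_z_kolekcja}, namely by the Enayat--Visser method of constructing a satisfaction class along an elementary chain, except that at every stage we keep track of \emph{internal} induction in place of (the relevant form of) collection, while keeping $\StrReg$ throughout. First I would reduce to a model-theoretic statement. Since $\CT^- + \StrReg + \INT$ is a computable theory, by Theorem~\ref{th_bsr} it is enough to show that it is consistent with the elementary diagram of an arbitrary countable recursively saturated $M\models\PA$; equivalently, to produce an elementary extension $M'\succeq M$ together with a set $T'\subseteq M'$ such that $(M',T')\models\CT^- + \StrReg + \INT$. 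Given this, Theorem~\ref{th_bsr} yields an expansion of $M$ itself to a model of the theory, and letting $M$ range over countable recursively saturated models of arbitrary consistent extensions of $\PA$ gives conservativity. (For the bare conservativity statement one could also just take any $M\models\PA + \neg\sigma$ and note that $M'$ still satisfies $\neg\sigma$, but phrasing it via Theorem~\ref{th_bsr} matches Lemma~\ref{lem_enayat_visser_z_kolekcja}.)

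Second, I would carry out the chain construction. Build an elementary chain $M = M_0 \prec M_1 \prec M_2 \prec \cdots$ of countable recursively saturated models of $\PA$ together with sets $T_i$ of codes of $M_i$-sentences such that: (a) the $T_i$ cohere, so that $T' := \bigcup_i T_i$ is a well-defined subset of $\Sent_{\LPA}^{M'}$, where $M' := \bigcup_i M_i$; (b) $T_{i+1}$ decides every sentence of $M_i$ and obeys the compositional Tarskian clauses on $\Sent_{\LPA}^{M_i}$, with witnesses for the existential clause drawn from $M_{i+1}$ and honoured by the later stages; (c) $T_{i+1}$ respects structural equivalence $\approx$ on $\Sent_{\LPA}^{M_i}$; (d) $T_{i+1}$ is internally inductive for $M_i$: for every $\phi\in\form^{\leq1}_{\LPA}$ coded in $M_i$, if $\phi(\num 0)\in T_{i+1}$ and $\phi(\num x)\in T_{i+1}\to\phi(\num{x+1})\in T_{i+1}$ for all $x\in M_i$, then $\phi(\num x)\in T_{i+1}$ for all $x\in M_i$. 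Every element, and hence every sentence and every formula, of $M'$ lies in some $M_i$ and so is eventually decided; the compositional axioms of $\CT^-$ for $(M',T')$ follow by locating the finitely many relevant sentences inside a single $M_i$ and applying (b); $\StrReg$ follows from (c); and $\INT$ for $(M',T')$ follows from (d), since given $\phi\in\form^{\leq1}_{\LPA}$ coded in $M'$ and $a\in M'$ one picks $j$ with $\phi,a\in M_j$, checks that the hypothesis of $\INT$ descends to the set $\{x\in M_j : \phi(\num x)\in T_{j+1}\}$, which is therefore all of $M_j$ by (d), so $\phi(\num a)\in T_{j+1}\subseteq T'$.

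The main obstacle is the step lemma: given $(M_i,T_i)$ satisfying (a)--(d), produce $(M_{i+1},T_{i+1})$. As in Enayat--Visser, this amounts to writing down the theory (computable in parameters from $M_i$) expressing that $(M_{i+1},T_{i+1})$ extends $(M_i,T_i)$ and has the required properties, showing it is consistent with $\ElDiag(M_i)$ by means of arithmetically definable partial satisfaction predicates---whose existence for each bounded syntactic depth $M_i\models\PA$ can verify, and which, being genuinely Tarskian, are automatically structurally regular---and then passing to a countable recursively saturated model. The genuinely delicate point, inherited from the conservativity of $\CT^- + \INT$ (Theorem~\ref{th_ctminus_int_conservative}, with a complete proof in \cite{leigh}), is that the partial satisfaction predicate witnessing consistency at each stage must be chosen so that requirement (d) is preserved; this is precisely where the argument goes beyond bookkeeping, and it is the single place where the present proof departs from---while closely paralleling---that of Lemma~\ref{lem_enayat_visser_z_kolekcja}, with the collection clause there replaced by (d) here. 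Folding in $\StrReg$ is then routine and is carried out exactly as in that lemma.
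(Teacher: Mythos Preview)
Your overall plan --- reduce via resplendence to building an elementary extension carrying a truth class, then run an Enayat--Visser chain with extra bookkeeping, pointing to Lemma~\ref{lem_enayat_visser_z_kolekcja} as the template with collection swapped for induction --- is exactly what the paper does (see the last paragraph of the Appendix). Two points in your sketch need correction, however, and the second is a genuine gap.

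First, your description of the consistency step is misleading: you appeal to ``arithmetically definable partial satisfaction predicates---whose existence for each bounded syntactic depth $M_i\models\PA$ can verify, and which, being genuinely Tarskian, are automatically structurally regular.'' This reads as an invocation of the arithmetical predicates $\Tr_c$ of Theorem~\ref{th_arytmetyczne_predykaty_prawdy}, but those exist only for \emph{standard} $c$; they cannot cover the nonstandard formulae of $M_i$, and they have no reason to cohere with the previously built $T_i$. The Enayat--Visser consistency argument works differently: one interprets $S_{i+1}$ on the finitely many formulae at hand by a formula of $\LPA$ \emph{augmented with finitely many of the disintegration predicates} $X^i_\phi$ (Definition~\ref{def_dezintegracja}) carried along from the previous stage, exactly as in the proof of Lemma~\ref{lem_niesprzecznosc_teoryj_EV}. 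Second, and relatedly, your condition (d) is $\INT$ itself --- the target --- but it is too weak to serve as the inductive invariant that makes the step go through. From (d) for $T_i$ alone you cannot conclude that a set defined from $T_i$ by an arithmetical formula with quantifiers (e.g.\ the projection arising from the existential clause when you unfold $\phi=\exists v\,\psi$) is inductive in $M_i$. What the paper tracks, and what you must track, is the stronger statement that $(M_i,T_i,X^i_\phi)_{\phi\in M_{i-1}}$ satisfies the \emph{full induction scheme} for its language; this propagates because in the compactness argument the interpretation of $S_{i+1}$ is definable in that language, and $\INT$ for the limit predicate then drops out as a corollary.
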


\section{The main result}
In this section, we will prove our main theorem. As we have written in the introduction, Richard Kaye asked whether $\CT^-$ with full collection scheme, but without any induction, is conservative over $\PA$. We provide a partial answer to this question. Let us consider the scheme of \df{local collection}, $\LocColl$, which consists of the following formulae:
\begin{displaymath}
\forall c \Bigl(\forall x< a \exists y \ \ \phi[T_c/T](x,y) \rightarrow \exists b \forall x < a  \exists y<b \ \ \phi[T_c/T](x,y) \Bigr) ,
\end{displaymath}
where $\phi$ is an arbitrary formula of $\LPA$ extended with a truth predicate and $T_c(x)$ abbreviates $T(x) \wedge \dpt(x) \leq c$. Local collection expresses that any such restriction of the truth predicate, $T_c$, satisfies full collection scheme.

\begin{theorem} \label{tw_konserwatywnosc_lokalnej_kolekcji}
	$\CT^- + \LocColl$ is conservative over $\PA$.
\end{theorem}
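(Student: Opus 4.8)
The plan is to prove the following model-theoretic statement, which by a standard compactness/completeness argument is equivalent to conservativity: every countable model $M \models \PA$ has an expansion $(M,T) \models \CT^- + \LocColl$. We start with a countable recursively saturated $M \models \PA$ (enough, since every completion of $\PA$ has such a model and conservativity need only be checked against recursively saturated models via the Barwise–Schlipf–Ressayre machinery). The key idea is that $\LocColl$ is, for each fixed $c$, a statement only about the truncated predicate $T_c$, which ranges over sentences of depth $\leq c$; and truncated compositional truth predicates are well understood. I would build $T$ as a union of a chain, or directly via resplendence, ensuring that for every nonstandard $c$ the truncation $T_c$ behaves like a full model of $\CT \res [0,c]$-type theory on its (bounded) domain, so that collection for $T_c$ follows from the fact that $\CT \res [0,c]$ proves full collection in the truncated language.

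More concretely, here are the steps. First, I would reformulate $\LocColl$: since for \emph{standard} $c$ the predicate $T_c$ is arithmetically definable (a partial truth predicate for depth $\leq c$), collection for $T_c$ is already a consequence of $\PA$; so the content of $\LocColl$ is entirely about nonstandard $c$. Second, I would invoke the earlier Theorem that "$I$ is a nonstandard initial segment and $\CT \res I$" is conservative over $\PA$, together with its fully inductive strengthening: the point is that $\CT \res I$ has \emph{full induction} for the extended language, hence proves the full collection scheme for formulae mentioning $T \res I$. Third — the crux — I want a single predicate $T$ on $M$ such that simultaneously (a) $(M,T) \models \CT^-$ and (b) for every nonstandard $c \in M$, the structure $(M, T_c)$ satisfies collection for all $\La_{\PA} \cup \{T_c\}$-formulae. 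I would obtain this by a back-and-forth/chain construction along an $\omega$-enumeration of potential collection instances and parameters: at stage $n$, given a finite approximation, use recursive saturation / chronic resplendence of $M$ to extend $T$ so as to satisfy the $n$-th instance of local collection, while maintaining a type asserting "$T$ is compositional" (a recursive type, realizable by Lachlan-type arguments and the KKL conservativity of $\CT^-$). The compositional axioms form a recursive set, and each $\LocColl$ instance for a given $c$ and $\phi$ and $a$ is a single first-order requirement on $T$; resplendence lets us meet it.

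The main obstacle I anticipate is showing that the collection instances are genuinely consistent with full compositionality — i.e. that adding $\LocColl$ does not overconstrain $T$. The danger is exactly the phenomenon flagged in the introduction: collection applied to the sentences in a proof $d$ (all of bounded depth $c$) must \emph{not} let us prove $d$'s conclusion true, so I must make sure the construction never forces $T$ to reflect $\PA$-provability. The resolution should be that, unlike induction, collection for $T_c$ does not let one run an induction on the length of a proof: collection only bounds the witnesses $y$ for $x < a$, and the relevant $\exists y$ statements about the truncated truth predicate never "accumulate" into a proof-reflection. I would make this precise by checking that $\CT^- + \LocColl$ is interpretable in, or follows from, $\CT^- + \INT + \StrReg$ restricted appropriately, or — more likely the actual route — by reducing to a model $(M,T) \models \CT^- + \StrReg$ obtained from Theorem \ref{th_ctminus_plus_int_plus_strreg_konserwatywne}, and then arguing that structural regularity plus the compositional clauses already make each truncation $T_c$ "sufficiently inductive on its own domain" to validate collection: for a fixed nonstandard $c$, the set of depth-$\leq c$ sentences is coded, the truncated predicate is an honest Tarskian truth predicate on that coded set, and collection over a coded (bounded) domain is cheap. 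The delicate bookkeeping is ensuring uniformity across \emph{all} nonstandard $c$ at once, which is where I expect the real work to lie and where the $\StrReg$ and resplendence hypotheses earn their keep.
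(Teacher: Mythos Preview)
Your proposal has the right overall architecture --- expand a countable recursively saturated $M$ by building $T$ as a union along a cofinal chain --- but it is missing the central mechanism that makes collection hold, and the alternatives you sketch do not work.

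The claim that ``collection over a coded (bounded) domain is cheap'' is the key error. The domain $\{\phi : \dpt(\phi) \leq c\}$ is indeed bounded, but an instance of collection for $T_c$ has the shape $\forall x < a\, \exists y\, \phi[T_c](x,y) \rightarrow \exists b\, \forall x<a\, \exists y<b\, \phi[T_c](x,y)$, where the witnesses $y$ range over \emph{all} of $M$, not over sentences of bounded depth. So there is nothing cheap here: $T_c$ can perfectly well define (via $\phi$) an unbounded function on $[0,a)$ unless something in the construction prevents it. Your hope that $\CT^- + \StrReg$ already makes each $T_c$ ``sufficiently inductive'' to validate collection amounts to hoping that $\CT^- + \StrReg \vdash \LocColl$, which would trivialise the theorem and is false (pathological truth predicates in the style of Smith's example can be arranged to violate collection for some truncation). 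Likewise, the back-and-forth idea of meeting collection instances one at a time by resplendence does not obviously terminate in a coherent predicate: a single collection instance is a $\Pi_2$-type condition on $T$ with parameters, and satisfying one such condition may conflict with satisfying another together with full compositionality; you have given no argument that the resulting theory is consistent.

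What the paper actually does is entirely different and rests on Proposition~\ref{stw_kappa_like_daje_kolekcje}: in an $\omega_1$-like model, \emph{any} predicate whatsoever satisfies full collection, for cardinality reasons. The route is: (i) given $(M,T) \models \CT^-\res[0,c] + \Coll + \StrReg$, extend $T$ to a full $\CT^-$-predicate $T^*$ whose \emph{disintegration} $(X_{T^*\phi})_{\phi\in M}$ satisfies collection jointly with $T$ (a modified Enayat--Visser construction, Lemma~\ref{lem_enayat_visser_z_kolekcja}); (ii) use Keisler's theorem on collection to get an $\omega_1$-like elementary end-extension of $(M, T, X_{T^*\phi})_\phi$; (iii) reassemble a truth predicate $T'$ from the extended predicates $X'_{T^*\phi}$ in the $\omega_1$-like model, where collection now comes for free; (iv) pull the result back to $M$ by resplendence. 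Iterating this along a cofinal $\omega$-sequence $(a_n)$ gives the desired $T = \bigcup T_n$. The disintegration and the passage through an $\omega_1$-like model are the ideas your proposal lacks.
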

The proof of the main result relies on the following simple and well-known  observation.

\begin{proposition} \label{stw_kappa_like_daje_kolekcje}
	Let $M \models \PA$ be a $\kappa$-like model for some regular cardinal $\kappa$. Let $T \subset M$. Then $(M,T)$ satisfies full collection scheme.
\end{proposition}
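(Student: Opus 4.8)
The plan is to read this off directly from the observation recorded just before the proposition: in a $\kappa$-like model $M$ with $\kappa$ regular, every subset of $M$ of cardinality $<\kappa$ is bounded. So I would fix an arbitrary instance of the collection scheme, given by a formula $\phi(x,y,\bar z)$ of the language $\LPA \cup \{T\}$ (possibly carrying further free variables, which I treat as parameters), and assume $(M,T) \models \forall x < a\, \exists y\, \phi(x,y,\bar p)$ for some $a,\bar p \in M$. The goal is to produce $b \in M$ with $(M,T) \models \forall x < a\, \exists y < b\, \phi(x,y,\bar p)$.

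The key step is a counting argument carried out in the metatheory. For each $x \in M$ with $x<a$, using the axiom of choice pick an element $y_x \in M$ with $(M,T)\models\phi(x,y_x,\bar p)$; such a $y_x$ exists by the hypothesis. Now consider $W = \{\, y_x : x \in M,\ x < a \,\} \subseteq M$. The initial segment $\{x \in M : x < a\}$ is a proper initial segment of the $\kappa$-like model $M$, hence has cardinality $<\kappa$, and therefore $|W| \le |\{x : x < a\}| < \kappa$.

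Applying the boundedness observation to $W$ gives some $b \in M$ with $y_x < b$ for every $x < a$. Then for each $x<a$ the element $y_x$ witnesses $\exists y < b\, \phi(x,y,\bar p)$, so $(M,T) \models \forall x < a\, \exists y < b\, \phi(x,y,\bar p)$. Since $\phi$ and the parameters were arbitrary, $(M,T)$ satisfies the full collection scheme.

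I do not expect a genuine obstacle here — the statement is essentially a bookkeeping consequence of regularity of $\kappa$. The only point worth flagging is that the assignment $x \mapsto y_x$ is made externally and need not be definable in $(M,T)$; in particular the argument uses no induction and no structural property of the predicate $T$ whatsoever. This is harmless, because the collection scheme only asserts the existence of a bound $b$, not a definable Skolem function.
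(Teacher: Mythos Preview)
Your proof is correct and is essentially the same as the paper's: the paper phrases the argument in terms of an arbitrary (external) function $f:M\to M$ whose image on $[0,a]$ has size $<\kappa$ and is therefore bounded, which is exactly your choice function $x\mapsto y_x$. Your version just unpacks this into an explicit verification of a generic instance of the scheme.
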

\begin{proof}
	For any function $f: M \to M$ and any $a \in M$, the image of the initial segment $f [[0,a]]$ has less then $\kappa$ elements and thus it is bounded. This immediately implies that the collection scheme holds in $M$ expanded with an arbitrary predicate.  
\end{proof}

The observation suggests one possible strategy of the proof that collection for the truth predicate is conservative over $\PA$. If up to elementary equivalence,  for any countable $M \models \PA$, we can find an $\omega_1$-like elementary end-extension $M'$ with a truth predicate $T$, then this predicate $T$ must automatically satisfy the collection scheme, which in turn implies that collection is conservative over $\PA$.\footnote{This strategy of proof was explicitly suggested by Richard Kaye. However, as we already mentioned, the most obvious strategy of building $\omega_1$-like models of $\CT^-$ simply does not work, due to the fact that not every model of $\CT^-$ has an end-extension.} Basing on this approach, we are able to show conservativity of local collection.

 We shall rely heavily on the construction of the \df{disintegration} of a truth predicate. To better understand it, observe that in the arithmetical context, the truth predicate canonically determines a satisfaction relation $S_T(x,y)$ via the definition:
\[S_T(\phi,\alpha) := \form_{\LPA}(\phi) \wedge \Val(\alpha,\phi) \wedge T(\phi[\alpha]).\]
Then, the disintegration of a truth predicate is simply an infinite family of projections of $S_T$ along the singleton sets $\{\phi\}$, for each formula $\phi$ in the considered model.

\begin{definition}\label{def_dezintegracja}
	Let $(M,T,X)\models \CT^-\res X$. The \df{disintegration} of $T$ is a family of predicates $\{X_{T\phi}\}_{\phi\in M}$ which are interpreted in $(M,T, X)$ by the condition 
	\[\alpha \in X_{T\phi} \textnormal{ iff } \Val(\alpha,\phi) \wedge T(\phi[\alpha]).\]
\end{definition}  

The idea of disintegration is that we expand $M$ with all relations (possibly of nonstandard arity) which are arithmetically definable with possibly nonstandard formulae using the predicate $T$. Let us notice that this newly obtained structure corresponds to the original one in a very direct way. Namely, the following equivalence holds for any $\eta \in \form_{\LPA}(M)$:
\begin{displaymath}
( M,T, X_{T\phi})_{\phi \in M} \models \forall \alpha \in \Val(\eta) \ \  X_{T\eta}(\alpha)  \equiv T \eta[\alpha]. 
\end{displaymath}

 Let us proceed to the lemma which is the technical core of our proof. Note that, in view of Theorem \ref{ubi prawda ibi konsystencja}, already here a dramatic difference from the induction scheme becomes apparent. In what follows, we will denote the \df{elementary diagram} of a model $M$ by $\ElDiag(M)$.

\begin{lemma} \label{lem_ct_minus_ograniczone_z_kolekcja_jest_konserwatywne}
	For any $M \models \PA$ and any $c \in M$, the theory $\ElDiag(M) + \CT^- \res [0,c] + \Coll$  is consistent.
\end{lemma}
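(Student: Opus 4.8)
The plan is to build the desired model by a compactness-and-overspill argument, combining the conservativity of restricted compositional truth (Theorem quoted above, that ``$I$ nonstandard, $\CT{\res}I$'' is conservative over $\PA$) with Proposition \ref{stw_kappa_like_daje_kolekcje} on $\kappa$-like models. First I would reduce to the case where $M$ is countable and recursively saturated: by the Barwise--Schlipf--Ressayre theorem any countable recursively saturated model suffices, and a general $M$ can be handled by an elementary-chain / L\"owenheim--Skolem reflection once the countable case is known, since $\Coll$ and $\CT^-\res[0,c]$ are first-order over $\ElDiag(M)$. So fix a countable recursively saturated $M \models \PA$ and $c \in M$, which we may take nonstandard (the standard case is trivial, as an arithmetical partial truth predicate for $\Sigma_c$-formulae already works and $\Coll$ for it follows from $\PA$).

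Next I would produce, inside a suitable extension, an $\omega_1$-like elementary end-extension carrying the restricted truth predicate. The key point is that $\CT\res I$ --- the fully inductive version restricted to an initial segment --- \emph{is} conservative over $\PA$, so by Barwise--Schlipf--Ressayre we can expand $M$ to a countable recursively saturated $(M,T,I) \models \CT\res I$ with $I$ a nonstandard initial segment; by overspill we may arrange $[0,c] \subseteq I$, hence $(M,T)\models \CT^-\res[0,c]$ as well. Because $\CT\res I$ includes \emph{full} induction for the language with $T$ and the second-order parameter, the MacDowell--Specker theorem (Theorem \ref{th_macdowell_specker}) applies: we may form a proper elementary end-extension of $(M,T,I)$ of the same cardinality. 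Iterating $\omega_1$ times and taking unions at limits yields an $\omega_1$-like elementary extension $(M',T',I')$ of $(M,T,I)$ --- all of whose proper initial segments are countable --- still satisfying $\CT\res I'$, and with $M' \succ M$, so $\ElDiag(M)$ holds in $M'$, and $[0,c]\subseteq I'$ so $(M',T')\models \CT^-\res[0,c]$.

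Finally, Proposition \ref{stw_kappa_like_daje_kolekcje} applied to the $\omega_1$-like model $M'$ (with $\kappa = \aleph_1$ regular) and the predicate $T'$ gives that $(M',T')$ satisfies the full collection scheme, i.e.\ $(M',T')\models \Coll$; and we already arranged $\ElDiag(M) + \CT^-\res[0,c]$. Hence $\ElDiag(M) + \CT^-\res[0,c] + \Coll$ has a model, which is what we wanted. I expect the main obstacle to be the bookkeeping in the $\omega_1$-like construction --- checking that elementary end-extensions of $(M,T,I)$ genuinely preserve $\CT\res I$ (this is immediate since the axioms are first-order in the full language and the extension is elementary) and that the union at limit stages keeps the segment $[0,c]$ inside $I'$ and keeps the model $\omega_1$-like (each initial segment being determined already at some countable stage). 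A secondary subtlety is the reduction from arbitrary $M$ to a countable recursively saturated one, but this is routine: consistency of the first-order theory $\ElDiag(M) + \CT^-\res[0,c] + \Coll$ follows from consistency over every finitely generated --- hence countable, and after elementary substructure, countable recursively saturated --- elementary submodel containing $c$, by a standard compactness argument.
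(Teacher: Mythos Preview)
Your argument has a genuine gap at the step ``by overspill we may arrange $[0,c] \subseteq I$.'' What resplendence together with the conservativity of $\CT{\res}I$ over $\PA$ gives you is an expansion $(M,T,I)$ with $I$ a \emph{nonstandard} initial segment, i.e.\ one containing all standard numbers. It does \emph{not} allow you to prescribe that a given nonstandard $c\in M$ lies in $I$, and overspill does not bridge this gap either: overspill produces \emph{some} nonstandard element with a property, not the specific $c$ you were handed.

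In fact the statement you would need --- that $\ElDiag(M)+\CT{\res}[0,c]$ (with full induction for the extended language) is consistent for every $M\models\PA$ and every $c\in M$ --- is \emph{false}. By Theorem~\ref{ubi prawda ibi konsystencja}, any model of $\CT{\res}[0,c]$ has no proof of $0=1$ from $\PA$ using only formulae of depth $\leq c$; but there are models $M\models\PA$ and nonstandard $c\in M$ in which such a proof exists, and for these $(M,c)$ no expansion to $\CT{\res}[0,c]$ can exist. The paper flags exactly this point just before stating the lemma: ``in view of Theorem~\ref{ubi prawda ibi konsystencja}, already here a dramatic difference from the induction scheme becomes apparent.'' So the passage through the fully inductive $\CT{\res}[0,c]$ and MacDowell--Specker cannot work; this is precisely why the paper instead runs an Enayat--Visser style construction (Lemma~\ref{lem_enayat_visser_z_kolekcja}) to obtain a $\CT^-$ predicate whose disintegration satisfies only \emph{collection}, and then invokes Keisler's theorem rather than MacDowell--Specker to build the $\omega_1$-like end-extension.
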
 
Recall that $\CT^- \res [0,c]$ is the theory of a compositional truth predicate for formulae of depth at most $c$.

Since, we will need to use the above lemma iteratively, we will need its strengthening which is proved in almost the same way, so we will only present the proof of the following version:

\begin{lemma} \label{lem_iteracja_lematu_o_kolekcji}
	Let $M \models \PA$ be a countable recursively saturated model and let $c<d$ be any two elements. Suppose that $(M,T) \models \CT^- \res [0,c] + \Coll + \StrReg$ is recursively saturated in the expanded language. Then there exists $T' \supset T$ such that $(M,T') \models \CT^- \res [0,d] + \Coll + \StrReg$.
\end{lemma}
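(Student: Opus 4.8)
The plan is to build $T'$ by a two-step procedure: first produce a candidate truth predicate $T''$ on all of $M$ that extends $T$ and is "compositional enough" to handle depths between $c$ and $d$, and then truncate it to depth $d$ and restore structural regularity by working with structural templates. The natural vehicle is the Enayat--Visser style construction of compositional truth predicates via chains of models, adapted to the restricted/collection-enriched setting (this is the construction referenced as Lemma~\ref{lem_enayat_visser_z_kolekcja} in the Appendix). Concretely: since $(M,T)$ is countable and recursively saturated in the language with $T$, by the Barwise--Schlipf--Ressayre theorem (Theorem~\ref{th_bsr}) and chronic resplendence it suffices to show that the theory $\ElDiag(M,T) + \CT^-\res[0,d] + \Coll + \StrReg$, with $T$ from the diagram required to be the restriction of the new predicate to depth $\leq c$, is consistent; then some recursively saturated expansion of $(M,T)$ realizes it, and the new predicate restricted to depth $\leq c$ agrees with $T$ on all of $M$ by elementarity, giving $T' \supseteq T$.

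So the work reduces to a consistency statement, and here I would use the compactness/chain argument behind Lemma~\ref{lem_ct_minus_ograniczone_z_kolekcja_jest_konserwatywne}: take an elementary chain $M = M_0 \prec M_1 \prec \cdots$ (or a single ultrapower-type extension) whose union $M^*$ is such that $M$ is "small" inside it, and define the extended truth predicate on a formula $\phi \in \mathrm{Sent}_{\LPA}(M)$ of depth $\leq d$ by recursion on the syntactic tree of its structural template: atomic sentences are evaluated by the value of closed terms (clause 2), and the Tarskian clauses 3--5 peel off connectives and quantifiers until we either hit an atomic sentence or reach depth $\leq c$, at which point we invoke the given $T$. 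The key points to verify are (i) that this is well-defined on $M$ — i.e. the "peeling" terminates and the witnesses for existential quantifiers can be found, which is exactly where one needs the ambient model to be rich enough (recursive saturation of $M$, used as in Lachlan's theorem, Theorem~\ref{th_lachlan}); (ii) that $\StrReg$ holds — this is automatic because the recursion is defined on structural templates and term values, not on the terms themselves; and (iii) that $\Coll$ holds for the expanded structure.

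For (iii) I would follow the strategy flagged by Proposition~\ref{stw_kappa_like_daje_kolekcje}: arrange that the model carrying the new truth predicate is (for the purposes of collection) a "$\kappa$-like-from-above" extension, i.e. build the extension so that $M$ sits as a cofinal-at-worst but in any case a bounded-image-preserving submodel, or more simply invoke Keisler's theorem (Theorem~\ref{th_keisler_kolekcja}): since $(M,T)\models\Coll$ for the language with $T$ (and $T_c$, and $X$), it has a proper elementary end-extension $(N,T_N)$ of the same cardinality in that language, and $N \models \Coll$ as well; now run the compositional recursion inside $N$ to define $T'$ on the sentences of $M$, using the new elements of $N$ above $M$ to witness existential quantifiers and to serve as the "bound $b$" in instances of collection. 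Because $M$ is an initial segment of $N$ and collection instances quantify bounded-existentially, a witness/bound found in $N$ for a $\Sigma$-formula with parameters from $M$ whose bound $a$ is in $M$ can be taken below any new element of $N$; this is the mechanism that transfers $\Coll$ from $N$ down to the expanded $M$. One then checks that the recursion, carried out in $N$, when restricted back to $M$ yields a predicate $T'$ on $M$ satisfying clauses 1--6 for depth $\leq d$, agreeing with $T$ below depth $c$, satisfying $\StrReg$, and satisfying $\Coll$.

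**The main obstacle.** The delicate point is (iii) combined with the requirement that $T' \supseteq T$ \emph{on the same model $M$}: one cannot simply pass to an end-extension and keep the truth predicate there, because $\Coll$ must hold for the \emph{pair} $(M,T')$ with $M$ unchanged, yet $M$ itself need not be $\kappa$-like. The resolution is that collection for $(M,T')$ follows once we know every instance "$\forall x<a\,\exists y\,\phi[T'_d/T'](x,y)$" that holds in $(M,T')$ already has a bounding $b \in M$; and this is forced by defining $T'$ via a recursion that is \emph{absolute} between $M$ and a suitable elementary extension $N\models\Coll$ in which $M$ is an initial segment, so that any unbounded-in-$M$ witnessing function would contradict $\Coll$ in $N$. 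Making this absoluteness precise — in particular, checking that the compositional recursion defining $T'$ does not itself smuggle in new unbounded definable functions, and that the given $T$ on $M$ extends compatibly inside $N$ (which is where recursive saturation of $(M,T)$ and the hypothesis $\CT^-\res[0,c]+\Coll+\StrReg$ on $(M,T)$ are used, via Theorem~\ref{th_bsr}) — is the technical heart of the argument, and is where the proof will differ, in its bookkeeping, from the simpler Lemma~\ref{lem_ct_minus_ograniczone_z_kolekcja_jest_konserwatywne}.
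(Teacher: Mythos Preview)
Your proposal has a genuine gap at the heart of the construction. You propose to define $T'$ on a sentence $\phi$ of depth $\leq d$ by ``recursion on the syntactic tree of its structural template,'' peeling off connectives and quantifiers ``until we either hit an atomic sentence or reach depth $\leq c$, at which point we invoke the given $T$.'' The difficulty is that $d - c$ may be nonstandard---and in the application to Theorem~\ref{tw_konserwatywnosc_lokalnej_kolekcji}, where the lemma is iterated along a cofinal sequence $(a_n)$, it typically is. An external, metatheoretic recursion of nonstandardly many steps is not a construction at all; and an internal recursion along the syntactic tree would require induction for formulae involving the truth predicate, which is precisely what we do not have. So the recursion you describe does not exist in the generality the lemma requires, and your predicate $T'$ is never actually produced.

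The paper sidesteps this obstacle by a quite different route. It first applies Lemma~\ref{lem_enayat_visser_z_kolekcja} to produce, on $M$ itself, a \emph{full} $\CT^-$-predicate $T^* \supseteq T$ (compositional at every depth, not merely up to $d$) with the additional property that the disintegration $(X_{T^*\phi})_{\phi \in M}$ satisfies full collection jointly with $T$. This single Enayat--Visser step replaces your depth-by-depth peeling and handles all depths at once. The structure $(M, T, X_{T^*\phi})_{\phi \in M}$ then satisfies collection, so Keisler's theorem may be iterated $\omega_1$ times to obtain an $\omega_1$-like elementary end-extension $(M', X'_{T^*\phi})_{\phi \in M}$. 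The truth predicate on $M'$ is then rebuilt from the extended disintegration predicates via structural templates (whence $\StrReg$), and collection for $(M', T')$ is free by Proposition~\ref{stw_kappa_like_daje_kolekcje} because $M'$ is $\omega_1$-like. Only at the very end does resplendence pull everything back to $(M, T)$.

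Your treatment of collection is the second place the proposal falls short. Passing to a single end-extension $(N, T_N) \succ_e (M, T)$ and invoking ``absoluteness'' does not force collection for $(M, T')$: in the nonstandard-gap case the predicate $T'$ you want is not definable from $T$ in $(M,T)$ by any fixed formula of the language, so instances of collection for $T'$ do not reduce to instances already available. The paper's mechanism---go all the way to an $\omega_1$-like model, where collection holds for \emph{every} subset, and only then invoke resplendence---is what makes the argument go through; you correctly identified this as the ``main obstacle,'' but the resolution you sketch does not close it.
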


In \cite{enayatvisser2}, the conservativity of $\CT^-$ has been demonstrated with an elegant, model-theoretic reasoning. The proof presented there allows numerous modifications in order to obtain finer results. We will make use of one such strengthening. We will use it in the proof of Lemma \ref{lem_iteracja_lematu_o_kolekcji}.

\begin{lemma} \label{lem_enayat_visser_z_kolekcja}
	Suppose that $(M,T,I) \models \CT^-{\res} I+ \Coll + \StrReg$ is a countable model recursively saturated in the extended language with $I$ an initial segment, possibly empty. Then there exists $T' \supset T$ such that $(M,T') \models \CT^- + \StrReg$ and, moreover, the model $(M,T,X_{T'\phi})_{\phi \in M}$ satisfies full collection scheme, where the family $\{X_{T'\phi}\}_{\phi\in M}$ is the disintegration of $T'$.
\end{lemma}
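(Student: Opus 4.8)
The plan is to adapt the model-theoretic construction of satisfaction classes of Enayat and Visser \cite{enayatvisser2} (the one behind Theorem \ref{th_kkl}) so as to: (a) start from the given partial predicate $T$, so that the output predicate $T'$ satisfies $T\subseteq T'$; (b) keep the structural regularity principle $\StrReg$ true at every stage; and (c) force full collection for the disintegration. The tools for (c) will be iterated Keisler end-extensions (Theorem \ref{th_keisler_kolekcja}, available because $\Coll$ holds), Proposition \ref{stw_kappa_like_daje_kolekcje}, and the resplendence theorem (Theorem \ref{th_bsr}).

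First I would reduce the lemma to a consistency statement. Working with a fresh predicate symbol $T'$, let $\Xi$ be the theory in $\LPA\cup\{T,\in_I,T'\}$ axiomatised by $\CT^-$ and $\StrReg$ written for $T'$, the sentence $\forall x\,(Tx\to T'x)$, and the full collection scheme for all $\LPA\cup\{T,T'\}$-formulae. Since $\Xi$ is a computable theory and $(M,T,I)$ is countable and recursively saturated in $\LPA\cup\{T,\in_I\}$, it suffices to show that $\ElDiag(M,T,I)+\Xi$ is consistent: Theorem \ref{th_bsr} then expands $(M,T,I)$ to a model $(M,T,I,T')$ of $\Xi$, and the defining equivalence $\alpha\in X_{T'\phi}\equiv T'(\phi[\alpha])$ translates every disintegration-formula, uniformly in the parameter $\phi$, into an $\LPA\cup\{T,T'\}$-formula via the satisfaction relation $S_{T'}$, so that collection for $\LPA\cup\{T,T'\}$ delivers collection in $(M,T,X_{T'\phi})_{\phi\in M}$.

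To prove $\ElDiag(M,T,I)+\Xi$ consistent I would build a witnessing model. If $I$ is a proper initial segment it is bounded by some $b\in M$ and hence does not grow under end-extensions (and if $I=M$ the lemma is trivial, taking $T'=T$). Iterating Theorem \ref{th_keisler_kolekcja} through $\omega_1$ and taking unions at limits yields an $\omega_1$-like model $(\widehat M,\widehat T,\widehat I)\succeq(M,T,I)$ still satisfying $\CT^-\res I+\Coll+\StrReg+\ElDiag(M,T,I)$ — so $\widehat T$ is a compositional, structurally regular predicate on $\widehat M$, defined on all sentences of depth below $b$. The crux is then to expand this to $(\widehat M,\widehat T,\widehat I,T')$ with $(\widehat M,T')\models\CT^-+\StrReg$ and $\widehat T\subseteq T'$: once this is achieved, $\widehat M$ is $\omega_1$-like for the regular cardinal $\aleph_1$, so Proposition \ref{stw_kappa_like_daje_kolekcje} gives full collection for every expansion of $\widehat M$, in particular for $\LPA\cup\{T,T'\}$, whence $(\widehat M,\widehat T,\widehat I,T')\models\ElDiag(M,T,I)+\Xi$. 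To construct $T'$ I would write $\widehat M=\bigcup_{\alpha<\omega_1}\widehat M_\alpha$ for the countable recursively saturated stages $\widehat M_\alpha$ of the Keisler tower and run the Enayat--Visser construction along this tower, dealing at stage $\alpha$ with the compositional requirements whose formulae and parameters have appeared by stage $\alpha$, while at each stage the predicate in play is only ever a \emph{partial} (structurally regular) satisfaction class over the countable model $\widehat M_\alpha$; a bookkeeping argument ensures that by stage $\omega_1$ every formula of $\widehat M$ and every compositional clause has been accounted for, so $T'=\bigcup_\alpha T'_\alpha$ works.

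I expect the main difficulty to be precisely this interleaving of the two constructions. The Enayat--Visser machinery operates on countable recursively saturated models, so one must keep every stage $\widehat M_\alpha$ countable and recursively saturated and, crucially, keep the satisfaction class \emph{partial} until the very end: by Smith's theorem \cite{smith} a model already carrying a full compositional truth predicate need not admit \emph{any} proper end-extension, so the completion of $T'$ must occur only in the $\omega_1$-limit, never at an intermediate stage. One must also check that each step is compatible with the collection needed to run the next Keisler extension and with $\StrReg$; the latter is exactly what the structural-template apparatus of Definitions \ref{def_structural_template}--\ref{def_structural regularity} is for, since the compositional clauses act on templates rather than on particular terms and bound variables, so structurally equivalent sentences are treated alike throughout. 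Finally, replacing the collection bookkeeping by a device that forces $\INT$ to hold of $T'$ gives, by essentially the same argument, Theorem \ref{th_ctminus_plus_int_plus_strreg_konserwatywne}.
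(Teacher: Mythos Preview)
Your reduction to a consistency statement via resplendence is correct and matches the paper's setup. The gap is in your construction of the witnessing model.

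You build the Keisler tower $(\widehat M_\alpha,\widehat T_\alpha,\widehat I_\alpha)_{\alpha<\omega_1}$ in the language $\{T,\in_I\}$ \emph{first}, using the given collection for that language, and only afterwards try to thread partial satisfaction predicates $T'_\alpha$ through the already-fixed models $\widehat M_\alpha$. But the tower is elementary only in $\{T,\in_I\}$: nothing relates $T'_\alpha$ to $\widehat M_{\alpha+1}$. Concretely, if $T'_\alpha$ declares $\neg\exists v\,\phi(v)$ true, so that $T'_\alpha(\neg\phi(\num{a}))$ for every $a\in\widehat M_\alpha$, you need $T'_{\alpha+1}(\neg\phi(\num{b}))$ for every new $b\in\widehat M_{\alpha+1}\setminus\widehat M_\alpha$ as well; nothing forces this, because the end-extension step was taken blind to $T'_\alpha$. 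The Enayat--Visser machinery does not decorate a pre-built chain: it \emph{constructs} each next model together with the next satisfaction fragment, precisely so that the required finite piece of $S_{j+1}$ is definable in the previous stage. If instead you try to genuinely interleave---carry $T'_\alpha$ into the language before applying Keisler---you need collection for $\{T,T'_\alpha\}$ at each countable stage, and you yourself flag this (``each step must be compatible with the collection needed to run the next Keisler extension''). Obtaining that collection is exactly the content of the lemma being proved, so the argument becomes circular.

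The paper never passes to an $\omega_1$-like model in this lemma. It runs a modified Enayat--Visser $\omega$-chain $(M_j,T_j,I_j,S_j,X^j_\phi)_{\phi\in M_{j-1}}$ in which the chain is required to be elementary not only in the base language but also in the finitely many disintegration predicates $X^j_\phi$ introduced so far. The compactness step works because, for any finite fragment, $S_{j+1}$ is arithmetically definable from $T_j$ together with finitely many $X^j_\phi$'s, all of which satisfy collection by induction; hence the new $X^{j+1}_\phi$'s do too, and this survives to the union by elementarity. The $\omega_1$-like trick you reach for (Proposition~\ref{stw_kappa_like_daje_kolekcje}) is used in the paper only \emph{downstream}, in Lemma~\ref{lem_iteracja_lematu_o_kolekcji}: there the tower is built for the disintegration predicates---whose collection is supplied by the present lemma---and the truth predicate is \emph{reassembled} from the elementarily extended $X'_{T^*\phi}$ on the $\omega_1$-like model, rather than carried up the tower directly. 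In effect you have transposed the roles of the two lemmata.
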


Notice that the crucial point of the above lemma is that the predicates $X_{\phi}$ satisfy collection \emph{jointly} with the original predicate $T$.

The lemma is proved by combining a resplendence argument and the Enayat--Visser construction.  The details are standard and are given in the Appendix. We now turn to the proof of Lemma \ref{lem_iteracja_lematu_o_kolekcji}.

\begin{proof}[Proof of Lemma \ref{lem_iteracja_lematu_o_kolekcji}]
	Let $(M,T), c, d$ be as in the assumption. Using Lemma \ref{lem_enayat_visser_z_kolekcja}, we construct a model $(M,T^*) \models \CT^- + \StrReg$ such that $T^* \supset T$ and the predicates $\{X_{T^*\phi}\}_{\phi \in M}$ satisfy full collection jointly with $T$. 
	
	We will show that $M$ has an elementary end extension $M'$
	such that for some $T'\subseteq M'$ extending $T^*$, $(M',T') \models \CT^-{\res}M + \Coll+ \StrReg$. 
	In particular, it follows that for any $c \in M$,
	\begin{displaymath}
	(M',T') \models \ElDiag(M) + \CT^- \res [0,c] + \Coll +  \StrReg.
	\end{displaymath}
	By resplendence, this will conclude the proof. (Note that $T'$ which we construct in the proof is not literally the same as $T'$ satisfying the conclusion of the lemma, but we would like to avoid employing excessively heavy notation.)

	Since $(M,T,X_{T^*\phi})_{\phi \in M}$ 	satisfies full collection scheme, by Keisler's Theorem \ref{th_keisler_kolekcja}, it has an elementary end extension. By taking an $\omega_1$-chain of such elementary end-extensions, we obtain a model  $(M',X'_{T^*\phi})_{\phi \in M}$ elementarily extending $(M,X_{T^*\phi})_{\phi \in M}$, where $M'$ is an $\omega_1$-like model. 
	
	Now, let
	\begin{multline*}
	T_0' =  \Bigl\{ \phi(t_1, \ldots, t_e) \in \Sent_{\LPA}(M') \mid \phi \in \form_{\LPA}(M), \tuple{t_1, \ldots, t_e} \in \ClTermSeq_{\LPA}(M'), \\
	(M',X'_{T^*\phi})_{\phi \in M} \models X_{T^*\phi}(\tuple{\val{t_1}, \ldots, \val{t_e}}) \Bigr\},
	\end{multline*}
	where we conflate a sequence of values and a corresponding assignment. Notice that we do not assume that the sequence $\tuple{\val{t_1}, \ldots, \val{t_e}}$ has standard length or standard values. Let finally:
	\begin{multline*}
	T' =  \Bigl\{ \phi \in  \Sent_{\LPA}(M') \mid \exists \psi \in \form_{\LPA}(M) \exists \tuple{t_1, \ldots, t_e} \in \ClTermSeq_{\LPA}(M') \\
	\psi \approx \phi(t_1, \ldots, t_e) \Bigr\}.
	\end{multline*}
	
	We claim that $(M',T') \models \CT^-{\res} M + \Coll+\INT + \StrReg$. This model satisfies collection scheme by Proposition \ref{stw_kappa_like_daje_kolekcje}, since $M'$ is $\omega_1$-like. Notice that $T_0'$ was defined only for sentences obtained by substituting terms into formulae from $M$, whereas we want to make sure that it is defined on formulae whose \emph{depth} is in $M$. However, since $M'$ is an end-extension of $M$ for any formula whose depth is in $M$, its syntactic template is in $M$ as well. 
	
	We first check that the compositional conditions are satisfied for $T'$ and the sentences $\phi(t_1, \ldots, t_e)$ for $\phi \in \form_{\LPA}(M)$ by cases which depend on the syntactic shape of a formula $\phi$. For example, let $\phi(t_1, \ldots, t_e) = \exists v \psi(v,t_1, \ldots,t_e)$ where $\psi \in \form_{\LPA}(M)$ and $\tuple{t_1, \ldots, t_e} \in M'$. The equivalence
	\begin{displaymath}
	\forall \alpha \in \Val(\phi) \ \ \Big( X_{T^*\phi}(\alpha) \equiv \exists \beta \sim_v \alpha \ \ X_{T^*\psi}(\beta) \Big)
	\end{displaymath} 
	holds in $(M,X_{T^*\phi})_{\phi \in M}$, since $T^*$ satisfies compositional conditions. Therefore it must hold in  $(M',X'_{T^*\phi})_{\phi \in M}$ by elementarity. So by definition $T'$ satisfies the compositional condition for the quantifier for the formula $\phi$. The other cases are analogous. 
	
	The compositional conditions are satisfied for other sentences with depth in $M$ as well. Take any formula $\phi \in M'$ such that $\dpt(\phi) \in M$. First observe that if $\phi \sim \psi$  and $\psi \in M$, then $\widehat{\phi}  \in M$, since by elementarity $\widehat{\psi} \in M$ and these two are equal. Then we check that $T'$ is compositional by case distinction depending on the main connective or quantifier in $\phi$. 
	
	For instance, suppose that  $\phi = \exists v \eta$, $T'\phi$ holds, and $\phi \approx \psi = (\exists w \xi) \in M$ such that $T'\psi$ holds. Then by compositionality of $T_0'$, there exists $x \in M'$ such that $T' \xi(\num{x})$ holds. Now, since $\eta(\num{x}) \approx \xi(\num{x})$, by definition $T'\eta(\num{x})$ holds as well. An analogous reasoning shows that if $T' \eta(\num{x})$ holds for some $x \in M'$, then $T' \phi$ holds. The argument for disjunction is similar.

	The argument for negation is the only place where we use $\StrReg$. Namely, suppose that $T' \neg \phi$ holds for some $\phi \in M'$. We want to show that $T'\phi$ does not hold. Suppose otherwise. By definition of $T'$, there exists $ \psi \approx \phi$ such that $\psi = \psi^*(t_1,\ldots,t_n)$ for some $\psi^* \in \form_{\LPA}(M)$ and $T_0'\neg \psi$ holds. By compositionality, $T'_0 \psi$ does not hold. Now, by $\StrReg$ $T_0' \eta$ cannot hold for any $\eta \approx \psi$. In particular, it cannot hold for any $\eta \approx \phi$. The other implication for the compositionality of negation can be proved with a simple argument similar to the argument for the existential quantifier. 
	
	It follows immediately by the construction that $T'$ satisfies the structural regularity property $\StrReg$.
	\begin{comment}
	Finally, the internal induction, $\INT$, holds in $(M',T')$ since if $\dpt(\phi)\notin M$, then $(M',T')\models \neg T'\phi(\num{0})$, hence the axiom of induction for $\phi$ is vacuously true. If $\dpt(\phi)\in M$, then $X'_{T^*\phi}$ satisfies the induction axiom by elementarity and the internal induction for all formulae with depth in $M$ follows.
	\end{comment}
\end{proof}

Now, we are ready to prove our theorem.
\begin{proof}[Proof of Theorem \ref{tw_konserwatywnosc_lokalnej_kolekcji}]
	Let $M \models \PA$ be any countable recursively saturated model. Fix any sequence $(a_n)_{n \in \omega}$ cofinal in $M$. Using Lemma \ref{lem_ct_minus_ograniczone_z_kolekcja_jest_konserwatywne} in the initial step and Lemma \ref{lem_iteracja_lematu_o_kolekcji} and chronic resplendence in the induction step, we construct a sequence of predicates $T_n$ such that
	\begin{displaymath}
	(M,T_n) \models \CT^- \res [0,a_n] + \Coll,
	\end{displaymath}
	 the constructed models are recursively saturated in the expanded language.
	
	Finally, we set $T:= \bigcup_{n \in \omega} T_n$. Then we readily check that $(M,T) \models \CT^- + \LocColl$. Since $M$ was arbitrary, this concludes the proof.
\end{proof}

\section{Local induction}

	As we have already noted, the behaviour of local collection is in stark contrast to the behaviour of local induction which is its natural analogue for the induction scheme. More precisely, let us define the instances of local induction, $\LocInd$, as follows:
	\begin{displaymath}
	\forall c \ \ \Bigl(\phi[T_c/T](0) \wedge \forall x \bigl(\phi[T_c/T](x) \rightarrow \phi[T_c/T](x+1) \bigr) \rightarrow \forall x \phi[T_c/T](x) \Bigr),
	\end{displaymath}	
	where $\phi$ is an arbitrary formula in the language $\LPA$ extended with a truth predicate and $T_c(x)$ is an abbreviation for $T(x) \wedge \dpt (x) \leq c$. In other words, 
	\begin{equation}\label{equat_loc_ind}\tag{\LocInd}
	(M,T)\models \CT^- + \LocInd \textnormal{ iff } \forall c\in M, (M,T_c)\models \CT{\res}[0,c],
	\end{equation}
	so local induction scheme expresses that any restricted truth predicate $T_c$ satisfies full induction.

	One can easily observe that local induction is not conservative over $\PA$, since it proves the consistency of $\PA$. Indeed, by composing Theorem \ref{ubi prawda ibi konsystencja} and the above condition \eqref{equat_loc_ind} one gets that for every $c\in M$, every proof of $0=1$ in $\PA$ contains a formula of complexity $>c$. Let us briefly recall the whole argument: take any proof $d$ in $\PA$, say, in Hilbert calculus. There exists $c$ such that all sentences occurring in that proof have complexity smaller than $c$. Take the restricted predicate $T_c$ and show, using local induction, that every sentence in that proof is true. Consequently, the conclusion of the proof has to be true, and thus it cannot be of the form "$0 \neq 0$."

 	The above proof essentially shows that in $\CT^- + \LocInd$, we can show the following principle of \df{global reflection}: 
 	\begin{equation}\label{GR}\tag{GR}
 	\forall \phi \in \Sent_{\LPA} \ \ \Pr_{\PA}(\phi) \rightarrow T \phi,
 	\end{equation}
 	where $\Pr_{\PA}(x)$ is the canonical provability predicate for $\PA$. In order to prove global reflection, we fix any $\phi$ which is provable in $\PA$, we fix any proof of $\phi$ and take any $b$ such that all formulae in the proof have depth smaller than $b$. Then we take the restriction $T_b$ and show by induction on the length of derivation that all formulae in the proof are true under all assignments.
 	
 	As shown by Kotlarski in \cite{kotlarski},
 	$\CT^-$ with global reflection proves $\Delta_0$-induction for the truth predicate. His argument was later refined in two ways by Cezary Cieśliński: firstly, in \cite{cies} it was shown that reflection over first order logic (i.e. \eqref{GR} with $\Pr_{\PA}$ changed to $\Pr_{\emptyset}$) is sufficient to prove $\Delta_0$ induction.
 	 Secondly, in \cite{cieslinskict0} it was shown that the closure under propositional logic principle, i.e. the sentence
 	\[\forall \phi\ \ \Pr_{\textnormal{Prop}}^T(\phi) \rightarrow T(\phi),\]
 	where $\Pr_{\textnormal{Prop}}^T(\phi)$ expresses that $\phi$ is provable from true premises in pure propositional calculus, is enough to yield bounded induction.\footnote{We note, however, that the last principle expresses closure of the set of true sentences under a logical reasoning. Thus we potentially require something more than in the previous two reflection principles.}

	Kotlarski in \cite{kotlarski} characterised the arithmetical strength of global reflection in terms of the following family of theories:
	\begin{align*}
	\Th_0 &= \PA\\
	\Th_{n+1} &= \set{\forall x \phi(x)}{\phi(x)\in\LPA, \forall k \in \omega \ \ \Th_{n}\vdash \phi(\num{k})}
	\end{align*}
	\begin{theorem}[Kotlarski]
		$\CT^- + \eqref{GR}$ is arithmetically conservative over $\PA + \set{\Con(\Th_n)}{n\in\omega}$.
	\end{theorem}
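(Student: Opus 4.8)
The statement asserts that $\CT^- + \eqref{GR}$ and $\PA + \set{\Con(\Th_n)}{n \in \omega}$ prove exactly the same arithmetical sentences, so the plan is to establish two inclusions between arithmetical theories.

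For the inclusion $\PA + \set{\Con(\Th_n)}{n \in \omega} \subseteq \CT^- + \eqref{GR}$ it is enough to show that $\CT^- + \eqref{GR}$ proves each $\Con(\Th_n)$, and I would do this by proving, by external induction on $n$, the stronger statement that $\CT^- + \eqref{GR} \vdash \forall \phi\, \bigl(\Pr_{\Th_n}(\phi) \to T\phi\bigr)$, i.e. that $T$ is closed under $\Th_n$-provability; $\Con(\Th_n)$ then follows because compositionality for atomic sentences gives $\neg T(\qcr{0 = \num{1}})$. The base case $n = 0$ is exactly $\eqref{GR}$. For the induction step, work inside a model and take a $\Th_{n+1}$-proof $d$ of $\phi$ — possibly of nonstandard length and using possibly nonstandardly many axioms — and show $T(d_i)$ by induction on the position $i$: logical axioms are true by $\eqref{GR}$; applications of modus ponens are handled by the compositional clauses for $\neg$ and $\vee$; and an axiom of $\Th_{n+1}$, being of the form $\forall x\, \psi(x)$ with $\Pr_{\Th_n}(\psi(\num{k}))$ for every $k$, is true because the outer inductive hypothesis gives $T(\psi(\num{k}))$ for all $k$ and hence $T(\forall x\, \psi(x))$ by the compositional quantifier clause. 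The inner induction runs on the predicate ``$T(d_i)$'', which is $\Delta_0$ in the language with $T$, so it needs $\Delta_0$-induction for that language; this is precisely the point where I would invoke Kotlarski's theorem quoted above, that $\CT^- + \eqref{GR}$ already proves $\Delta_0$-induction for the extended language.

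For the conservativity inclusion — every arithmetical theorem of $\CT^- + \eqref{GR}$ is provable in $\PA + \set{\Con(\Th_n)}{n \in \omega}$ — I would argue model-theoretically, in the style of the Enayat--Visser construction used elsewhere in the paper. Given an arithmetical $\sigma$ not provable in $\PA + \set{\Con(\Th_n)}{n}$, pick a countable recursively saturated $M \models \PA + \set{\Con(\Th_n)}{n} + \neg\sigma$; since $\CT^- + \eqref{GR}$ is $\PA$ together with finitely many sentences of the language with $T$, it is enough to produce $T$ over an elementary extension $M^*$ of $M$ with $(M^*, T) \models \CT^- + \eqref{GR}$ (one may then also push this back to $M$ itself via Theorem \ref{th_bsr}, but that is not needed for conservativity). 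First, by overspill, from $M \models \Con(\Th_n)$ for every standard $n$ one gets $M \models \forall n \le c\, \Con(\Th_n)$ for some nonstandard $c$. Then build $T$ by the usual step-by-step satisfaction-class construction: extend a partial, compositional truth predicate along an elementary chain of recursively saturated models, at each stage deciding one more sentence and putting in all $\PA$-provable sentences. For \emph{standard} proofs the consistency of the approximations needs nothing beyond $\CT^-$; what the internal hypothesis ``$\Con(\Th_n)$ for $n \le c$'' buys is the consistency of the approximations in the presence of a \emph{nonstandard} $\PA$-proof, whose conclusion $T$ must declare true while remaining compositional on sentences of the relevant nonstandard syntactic depth.

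I expect this last point to be the main obstacle, and it is where the exact shape of the hierarchy $\Th_n$ matters. One has to (i) replace the external clause ``$\forall k \in \omega$'' in the definition of $\Th_{n+1}$ by a genuine arithmetical condition, so that $\Con(\Th_n)$ has a fixed meaning — note that $\Th_{n+1}$ is $\Pi^0_2$, not recursively enumerable, so this choice is substantive — and (ii) show that the $n$-th level of the hierarchy is exactly what is needed to absorb $n$ nested applications of the compositional quantifier clause when one tries to build a truth predicate over a nonstandard proof. Verifying that this internal reformulation of $\Th_n$ matches the layering of the Tarskian clauses, and carrying out the overspill/compactness argument cleanly at the nonstandard level $c$, is the part that requires real work; the remaining ingredients — compositional bookkeeping, recursive saturation and resplendence, the step-by-step construction — are routine variants of arguments already present in this paper and in \cite{enayatvisser2}.
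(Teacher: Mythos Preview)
The paper does not prove this theorem: it is quoted as a result of Kotlarski with a reference to \cite{kotlarski}, and the only additional comment is that the arithmetical theory in question coincides with $\omega$-iterated uniform reflection over $\PA$, with details deferred to \cite{smorynski} and \cite{lelyk_thesis}. There is therefore no proof in the paper to compare your proposal against.

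On the merits of your sketch: the first inclusion is essentially the standard argument and goes through once one fixes an internal definition of $\Th_n$ (or, more conveniently, replaces the hierarchy by iterated uniform reflection, as the paper suggests); your appeal to $\Delta_0$-induction for the extended language, supplied by \eqref{GR} via Kotlarski's result, is exactly the right move for the inner induction on proof length. The conservativity direction, however, is where the content lies, and your outline does not yet contain the key idea. Kotlarski's construction does not run an Enayat--Visser chain; it passes through \emph{$M$-logic}, the deductive system over a countable recursively saturated $M$ obtained by adjoining the $M$-rule (from $\{\phi(\num a): a\in M\}$ infer $\forall x\,\phi(x)$). The hypotheses $\Con(\Th_n)$ together with overspill are used to show that $\PA$ is $M$-logic consistent, and a complete $M$-logic theory then yields a full satisfaction class on $M$ that automatically contains every (possibly nonstandard) $\PA$-theorem, hence satisfies \eqref{GR}. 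Your phrase ``$\Con(\Th_n)$ for $n\le c$ buys consistency of the approximations in the presence of a nonstandard $\PA$-proof'' points in the right direction but does not by itself explain how to obtain a \emph{compositional} predicate containing all of $\Pr_{\PA}$; the $M$-logic apparatus is precisely the missing bridge.
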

	%%as $\omega$-many iterations of the uniform reflection scheme over $\PA$. 
	An easy argument shows that the above arithmetical theory is equivalent to $\omega$-many iterations of the uniform reflection principle over $\PA$. Details concerning the inclusion of Kotlarski's theory in the iterations of reflection can be found in the paper \cite{smorynski} and in the second author's PhD Thesis, \cite{lelyk_thesis}.  
	
	It turns out that the content of $\LocInd$ can be characterised in a very precise manner. We have just shown that it implies global reflection \ref{GR}. It turns out that $\LocInd$ is exactly equivalent to \ref{GR}. 
	
	\begin{fact} \label{fakt_locind_equivalent_gr}
		$\CT^- + \LocInd$ is equivalent to $\CT^- +$ \textnormal{\ref{GR}}.
	\end{fact}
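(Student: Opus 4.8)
The plan is to establish both directions of the equivalence over $\CT^-$. The forward direction, that $\CT^- + \LocInd$ proves \ref{GR}, has essentially already been carried out in the discussion preceding the statement: given $\phi$ with $\Pr_{\PA}(\phi)$, fix a (possibly nonstandard) proof $d$ of $\phi$, let $b$ bound the syntactic depths of all formulae occurring in $d$, pass to the restricted predicate $T_b$, and run an internal induction on the position in $d$ to show that every formula appearing in $d$ is true under all assignments --- here we use that $(M,T_b) \models \CT\res[0,b]$, so $T_b$ is genuinely compositional and fully inductive on the relevant formulae, which is exactly what makes the soundness-of-proofs induction go through. Hence $T\phi$. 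So the only real content is the reverse direction: $\CT^- + \ref{GR} \vdash \LocInd$.

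For the reverse direction I would argue in a model $(M,T) \models \CT^- + \ref{GR}$, fix $c \in M$, and show $(M,T_c) \models \CT\res[0,c]$, i.e.\ $T_c$ satisfies full induction (for formulae in the language with $T_c$, equivalently with $T$ and the parameter $c$) in addition to the compositional conditions up to depth $c$, which it inherits automatically from $\CT^-$. The key observation is that global reflection lets us internalise the metatheoretic fact that induction is a theorem scheme of $\PA$: for a fixed formula $\psi(x)$ (in the relevant extended language) one wants to derive the induction axiom for $\psi$ relative to $T_c$. The standard route --- the one underlying Kotlarski's proof that $\ref{GR}$ yields $\Delta_0$-induction for the truth predicate, and Cie\'sli\'nski's refinements cited in the text --- is to observe that each instance of the $T_c$-relativised induction axiom, once we fix the complexity bound $c$, can be reduced to a statement of the form ``a certain purely arithmetical sentence (with parameters, built uniformly from $c$ and the instance) is true,'' and that this arithmetical sentence is provable in $\PA$; then $\ref{GR}$ (or even reflection over first-order logic) delivers its truth, hence the induction instance. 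Concretely, I would: (i) express, inside $M$, the induction axiom for $T_c$ applied to $\psi$ as an $\LPA \cup \{T\}$-sentence $\theta_\psi$; (ii) use the compositional axioms of $\CT^-$ restricted to depth $\leq c$ to rewrite $\theta_\psi$ so that the only occurrences of $T$ are applied to sentences of depth $\leq c$, and then further reduce truth-of-those-sentences, via compositionality, to an arithmetical matrix --- in effect replacing $T_c$ by a genuine partial arithmetical truth predicate for depth $\leq c$, which exists because $c$ is a fixed (though possibly nonstandard) bound; (iii) observe that the resulting sentence is provable in $\PA$ (induction for the partial truth predicate for a bounded complexity class is a $\PA$-theorem, uniformly in $c$); and (iv) apply $\ref{GR}$ to conclude it is true, hence $\theta_\psi$ holds.

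The main obstacle, and the step requiring the most care, is (ii)--(iii): making precise the claim that ``$T_c$-relativised induction for $\psi$ reduces, provably in $\PA$ and uniformly, to truth of a $\PA$-provable arithmetical sentence.'' The subtlety is that $\psi$ itself may contain $T$ outside the scope of the depth bound, and that the bound $c$ is nonstandard, so one cannot literally invoke an arithmetical partial truth predicate as an honest formula; instead one must work with the compositional clauses of $\CT^-\res[0,c]$ as a \emph{substitute} for such a predicate, checking that they suffice to carry out, inside $M$, the usual Tarski-style unwinding of truth down to the atomic level for all sentences of depth $\leq c$. This is exactly the technical heart of Kotlarski's and Cie\'sli\'nski's arguments, and the cleanest presentation is probably to cite those results: since $\ref{GR}$ (indeed reflection over first-order logic) already proves $\Delta_0$-induction for $T$ by \cite{kotlarski, cies}, and more generally the soundness argument shows $\ref{GR}$ proves full induction for \emph{any} $T_c$ with $c$ fixed --- because once $c$ is fixed, every instance of $T_c$-induction is (provably in $\PA$) equivalent to the truth of a $\PA$-provable arithmetical sentence obtained by the bounded Tarskian unwinding --- the equivalence follows. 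I would present the forward direction in full and the reverse direction by reducing it to this established technique, flagging that the bounded-complexity Tarskian unwinding works uniformly in the nonstandard parameter $c$.
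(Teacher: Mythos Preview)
Your forward direction is fine and matches the paper. The reverse direction has the right instinct---replace $T_c$ by something built from a partial arithmetical truth predicate---but the execution has a real gap, and your diagnosis of the obstacle is inverted.

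You say that for nonstandard $c$ ``one cannot literally invoke an arithmetical partial truth predicate as an honest formula'' and must instead unwind $T$ via compositional clauses to reach ``an arithmetical matrix'' or ``a $\PA$-provable arithmetical sentence.'' Both halves of this are wrong. First, you \emph{can} invoke $\Tr_c$: it is a perfectly good (nonstandard) element of $\form_{\LPA}(M)$, and the expression $T\Tr_c(\num{\phi})$ is meaningful. The paper's key move is exactly this: since $\PA$ proves (internally, for all $c$) that $\Tr_c(\num{\phi}) \equiv \phi$ whenever $\dpt(\phi)\leq c$, global reflection yields $T(\Tr_c(\num{\phi}) \equiv \phi)$, and compositionality gives $T\Tr_c(\num{\phi}) \equiv T\phi$. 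Hence $T_c$ is extensionally the same as $\Theta_c(x) := T\Tr_c(\num{x}) \wedge \dpt(x)\leq c$. Second, your steps (ii)--(iii) cannot terminate in an arithmetical sentence: the formula $\psi$ whose $T_c$-induction instance you are analysing lives in the \emph{expanded} language and contains outer occurrences of $T$ that no amount of depth-$c$ unwinding removes. So there is no ``$\PA$-provable arithmetical sentence'' to hit with \ref{GR} at the end.

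What actually closes the argument is a reduction to \emph{internal induction} $\INT$, which you never mention. Once $T_c$ is replaced by $\Theta_c$, the compositional axioms let you pull the single outer $T$ in $\Theta_c$ up through the (standard) syntax of $\psi$, turning the $T_c$-induction instance into $T\psi[\Tr'_c](\num{0}) \wedge \forall x(T\psi[\Tr'_c](\num{x})\to T\psi[\Tr'_c](\num{x+1})) \to \forall x\, T\psi[\Tr'_c](\num{x})$, an instance of $\INT$ for the (nonstandard) arithmetical formula $\psi[\Tr'_c]$. And $\INT$ itself follows from \ref{GR} directly: for each $\phi\in\form^{\leq 1}_{\LPA}$, $\PA$ proves the induction axiom $\ind(\phi)$, so \ref{GR} gives $T(\ind(\phi))$, and compositionality unpacks this into the $\INT$ instance. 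Your citation of Kotlarski and Cie\'sli\'nski for $\Delta_0$-induction is not what is needed here; the two-step route through $\INT$ and the $\Tr_c$ trick is.
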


	Moreover, it was shown in the second author's PhD thesis \cite{lelyk_thesis} that $\CT^- +$ \ref{GR} is equivalent to $\CT_0$, the compositional truth theory $\CT^-$ extended with bounded induction for the full language which immediately allows us to obtain an equivalent characterisation.
	
	\begin{fact} \label{fak_locind_equivalent_ct0}
		$\CT^- + \LocInd$ is equivalent to $\CT_0$. 
	\end{fact}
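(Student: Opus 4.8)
The plan is to obtain Fact~\ref{fak_locind_equivalent_ct0} by composing two equivalences already available to us. First, by Fact~\ref{fakt_locind_equivalent_gr}, the theories $\CT^- + \LocInd$ and $\CT^- + \eqref{GR}$ have exactly the same consequences. Second, by the characterisation recalled immediately above \cite{lelyk_thesis}, $\CT^- + \eqref{GR}$ is equivalent to $\CT_0$. Transitivity of logical equivalence then yields $\CT^- + \LocInd \equiv \CT^- + \eqref{GR} \equiv \CT_0$, which is exactly the claim. Thus, once the two inputs are in hand, the proof is a two-line chase.

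For orientation it is worth recording where the actual content of these inputs sits. In Fact~\ref{fakt_locind_equivalent_gr}, the implication from $\LocInd$ to global reflection is the derivation-length argument already given in this section: a fixed $\PA$-proof $d$ mentions only formulae of depth below some $c$, and $\LocInd$ supplies full induction for the truncated predicate $T_c$, which suffices to carry ``true under every assignment'' through each line of $d$; the converse direction is the substance of Fact~\ref{fakt_locind_equivalent_gr}. In the equivalence $\CT^- + \eqref{GR} \equiv \CT_0$, the direction $\CT^- + \eqref{GR} \vdash \CT_0$ is Kotlarski's theorem \cite{kotlarski}, refined by Cieśliński \cite{cies,cieslinskict0}, that global reflection---indeed reflection over pure logic---already yields $\Delta_0$-induction for the full language; the converse $\CT_0 \vdash \eqref{GR}$ is the more delicate half established in \cite{lelyk_thesis}.

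Accordingly, there is no new obstacle in Fact~\ref{fak_locind_equivalent_ct0} itself. The one place where genuine work would be needed, were a self-contained proof wanted, is reproving $\CT_0 \vdash \eqref{GR}$: the naive approach---inducting on the length of a $\PA$-proof with the formula ``the $i$-th line is true under every assignment''---does not go through, because that formula is $\Pi_1$ in the language with $T$ (it quantifies universally over assignments, whose values are unbounded), while $\CT_0$ provides only $\Delta_0$-induction. One has to replace it by a low-complexity invariant---for instance by passing to universal closures so that the relevant statements become essentially atomic in $T$, and checking that the truth predicate respects every inference rule at that level---which is precisely the Cieśliński-style bookkeeping carried out in \cite{lelyk_thesis}; here it is enough to invoke that result together with Fact~\ref{fakt_locind_equivalent_gr}.
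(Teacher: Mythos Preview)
Your proposal is correct and matches the paper's approach exactly: the paper derives Fact~\ref{fak_locind_equivalent_ct0} immediately from Fact~\ref{fakt_locind_equivalent_gr} together with the equivalence $\CT^- + \eqref{GR} \equiv \CT_0$ established in \cite{lelyk_thesis}, just as you do. Your additional remarks on where the real work lies (in particular the $\Pi_1$ obstruction to a naive proof of $\CT_0 \vdash \eqref{GR}$) are accurate and go a bit beyond what the paper spells out, but the core argument is identical.
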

	
	It is relatively straightforward to show that $\CT^- + $ \ref{GR} proves internal induction, $\INT$. Let $\ind(\phi)$ abbreviate the axiom of induction for a formula $\phi\in \form_{\Lang_{\PA}}^{\leq 1}$, i.e. the sentence
	\[\phi(0)\wedge \forall x \bigl(\phi(x)\rightarrow \phi(x+1)\bigr)\rightarrow \forall x \phi(x).\]
	We work in $\CT^- + $ \ref{GR}. Since for every $\phi\in \form_{\Lang_{\PA}}^{\leq 1}$ we have $\Pr_{\PA}(\ind(\phi))$, by \ref{GR} it follows that $T(\ind(\phi))$ holds. By compositional axioms and extensionality we obtain the sentence
	\[T(\phi(0))\wedge \forall x\bigl(T(\phi(\num{x}))\rightarrow T(\phi(\num{x+1}))\bigr)\rightarrow \forall x T(\phi(\num{x})).\]
	%%%%%%%%%% BW 9.06 Mateusz to zmienił. Spytać o rezmianę.

 	It is a classical fact of first-order arithmetic that there exist partial $\Sigma_n$-truth predicates. More precisely, the following holds provably in $\PA$:\footnote{For a detailed discussion of arithmetical truth predicates, see \cite{hajekpudlak}, Chapter I, Section 1(d), pp.50--61.}
 	\begin{theorem} \label{th_arytmetyczne_predykaty_prawdy}
 		For every $n$, there exists a formula $\Tr_n$ such that for every sentence $\phi$ with $\dpt(\phi) \leq n$ (in fact, for $\phi \in \Sigma_n$), the following equivalence holds:
 		\begin{displaymath}
 		\Tr_n(\num{\phi}) \equiv \phi.
 		\end{displaymath}   
 	\end{theorem}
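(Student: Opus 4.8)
The plan is to follow the classical construction of arithmetical partial satisfaction predicates (see \cite{hajekpudlak}, Chapter I.1(d)). The statement is to be read as a scheme: for each (standard) $n$ we produce an arithmetical formula $\Tr_n$, and for each (standard) sentence $\phi$ of depth at most $n$ we show $\PA \vdash \Tr_n(\num{\phi}) \equiv \phi$. I would build $\Tr_n$ by recursion on $n$, first producing an auxiliary satisfaction-with-assignment predicate $\mathrm{Sat}_n(\phi,\alpha)$ and then taking $\Tr_n$ to be its restriction to sentences.

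For the base case $n=0$ I would construct $\mathrm{Sat}_0$, a satisfaction predicate for bounded ($\Delta_0$) formulae. The key point is that for a bounded formula $\phi$ and an assignment $\alpha$, the truth value of $\phi$ under $\alpha$ is decided by a search all of whose ranges are bounded by values of terms occurring in $\phi$, hence by a single arithmetically computable bound $b(\phi,\alpha)$. So I would let $\mathrm{Sat}_0(\phi,\alpha)$ assert: there is a finite ``evaluation table'' assigning a truth value to each pair consisting of a subformula $\theta$ of $\phi$ and an assignment $\beta \supseteq \alpha$ with values $\le b(\phi,\alpha)$, which is correct on atomic formulae via the value function $\val{\cdot}$ and closed under the Tarski clauses for $\neg$, $\vee$ and the bounded quantifiers, and which assigns ``true'' to $(\phi,\alpha)$. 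This is a $\Sigma_1$ condition; inside $\PA$ (in fact in a weak fragment), $\Delta_0$-induction on the build-up of $\phi$ shows that such a table exists and is unique, so $\mathrm{Sat}_0$ is equivalently $\Pi_1$, i.e.\ provably $\Delta_1$.

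For the inductive step, given $\mathrm{Sat}_n$ I would define $\mathrm{Sat}_{n+1}$ on depth-$\le n{+}1$ formulae by peeling off the outermost quantifier/connective layer and recursing, using the dual of $\mathrm{Sat}_n$ for the dual layer --- so one really defines a $\Sigma$- and a $\Pi$-version in parallel. Thus $\mathrm{Sat}_{n+1}(\exists v\,\theta,\alpha) \equiv \exists x\,\mathrm{Sat}_{n+1}(\theta,\alpha[v{:=}x])$ and dually, with $\mathrm{Sat}_{n+1}$ agreeing with $\mathrm{Sat}_n$ on formulae of depth $\le n$; unwinding the definition shows $\Tr_{n+1}$ can be taken to lie in $\Sigma_{n+1}$ (for $n+1 \ge 1$). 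Once $\mathrm{Sat}_n$ is in hand, the Tarski biconditionals $\PA \vdash \forall\alpha\,(\mathrm{Sat}_n(\num{\psi},\alpha) \equiv \psi[\alpha])$ are proved by an external induction on the fixed standard formula $\psi$, each case being an immediate unfolding of the definition against the matching compositional clause; specialising to sentences yields $\PA \vdash \Tr_n(\num{\phi}) \equiv \phi$. Running the same recursion starting from a prenex normal form gives the stronger assertion for all $\phi \in \Sigma_n$.

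The main obstacle is the base case: one must check carefully, inside $\PA$, that the bounded-evaluation recursion is legitimate --- that the evaluation table is a genuine object (its domain is bounded, so this causes no difficulty) and that it both exists and is unique --- and then verify that the external induction establishing the Tarski biconditionals really goes through, i.e.\ that the definition of $\mathrm{Sat}_0$ unfolds to exactly the compositional clauses. Everything above the base case is then bookkeeping: tracking quantifier complexity so that $\Tr_n$ lands in the advertised class, and re-running the same external induction one level higher.
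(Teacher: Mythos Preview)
Your proposal is correct and follows precisely the classical construction that the paper invokes: the paper does not give its own proof of this theorem but simply cites \cite{hajekpudlak}, Chapter I, Section 1(d), and your sketch---building $\mathrm{Sat}_0$ for $\Delta_0$ formulae via bounded evaluation tables, then extending level by level and verifying the Tarski biconditionals by external induction on the standard formula---is exactly the argument found there. One small remark: your inductive clause as written has $\mathrm{Sat}_{n+1}$ on both sides, but since you stipulate that it agrees with $\mathrm{Sat}_n$ on formulae of depth $\le n$, the recursion is well-founded and the intent is clear.
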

This theorem formalises in $\PA$, hence  we have:
	\begin{displaymath}	\forall c \forall \phi \in \Sent_{\LPA} \Big(\dpt(\phi) \leq c \rightarrow \Pr_{\PA}(\Tr_c(\num{\phi}) \equiv \phi) \Big).
	\end{displaymath}
	 By \ref{GR} and the compositional axioms we obtain:
	\begin{displaymath}
	\forall c \forall \phi \in \Sent_{\LPA} \Big(\dpt(\phi) \leq c \rightarrow (T\Tr_c(\num{\phi}) \equiv T\phi) \Big).
	\end{displaymath}
	Let $\Theta_c(x)$ be defined as $T\Tr_c(\num{x}) \wedge x \in \Sent_{\LPA} \wedge \dpt(x) \leq c$. Fix any instance of the induction scheme containing the truth predicate $T_c(x)$:
		\begin{displaymath}
	\phi[T_c](0) \wedge \forall x \bigl(\phi[T_c](x) \rightarrow \phi[T_c](x+1) \bigr) \rightarrow \forall x \phi[T_c](x).
	\end{displaymath}
	Since $T_c$ and $\Theta_c$ are equivalent by the above considerations, the displayed sentence is equivalent to:
	\begin{displaymath}
	\phi[\Theta_c](0) \wedge \forall x \bigl(\phi[\Theta_c](x) \rightarrow \phi[\Theta_c](x+1) \bigr) \rightarrow \forall x \phi[\Theta_c](x).
	\end{displaymath}
	But, by applying compositional axioms for the full truth predicate $T$, we can "pull it up" from $\Theta_c$ to the top of the formula $\phi$, thus obtaining:
	\begin{displaymath}
	T\phi[\Tr'_c](\num{0}) \wedge \forall x \bigl(T\phi[\Tr'_c](\num{x}) \rightarrow T\phi[\Tr'_c](\num{x+1}) \bigr) \rightarrow \forall x T\phi[\Tr'_c](\num{x}),
	\end{displaymath}
	where $\Tr'_c(x)$ is the formula $\Tr_c(x)  \wedge x \in \Sent_{\LPA} \wedge \dpt(x) \leq c$. The last formula is however an instance of the internal induction axiom and thus is provable in $\CT^- +$ \ref{GR}. This shows that $\LocInd$ holds in $\CT^- +$ \ref{GR}.

	\section{The strength of $\BSigma_n(T)$}
	
	Let us now consider a question whether adding a little bit of collection to $\CT_0$ increases the arithmetical strength of the latter theory. Let us denote
	\[\BSigma_n(T):= \CT_0 + \SigmanColl,\] 
	where $\SigmanColl$ is the restriction of  full collection scheme to $\Sigma_n$ formulae in the expanded language. It is easy to observe that, as in the purely arithmetical setting, we have
	\[\BSigma_{n+1}(T)\vdash \CT_n,\]
	and $\CT_{n+1}\vdash \Con (\CT_n)$, hence already $\BSigma_2(T)$ is arithmetically non-conservative over $\CT_0$. What is left is the case of $\Sigma_1$ collection: we shall show that it is $\Pi_2$-conservative over $\CT_0$ (over the full language with the truth predicate), which implies that $\BSigma_1(T)$ is arithmetically conservative over $\CT_0$ as for every arithmetical sentence $\phi$, $T(\phi)$ is an atomic  sentence of the expanded language equivalent to $\phi$ (provably in $\CT^-$). More generally, the situation for fragments of $\CT$ parallels the one well known from fragments of $\PA$:
	
	\begin{theorem}\label{th_besigmaen}
		For every $n\geq 0$, $\BSigma_{n+1}(T)$ is $\Pi_{n+2}$ conservative over $\CT_n$ in the extended language. In particular for all $n$, $\BSigma_{n+1}(T)$ is arithmetically conservative over $\CT_n$.
	\end{theorem}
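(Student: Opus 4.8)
The plan is to prove the conservativity model-theoretically, exactly in the spirit of the proof of Theorem~\ref{tw_konserwatywnosc_lokalnej_kolekcji}, realising the statement as the truth-theoretic counterpart of the classical fact that $\textnormal{B}\Sigma_{n+1}$ is $\Pi_{n+2}$-conservative over $\textnormal{I}\Sigma_n$. First I would invoke the standard reduction: for theories $U_1\subseteq U_2$ in $\LPA\cup\{T\}$, $U_2$ is $\Pi_{n+2}$-conservative over $U_1$ iff every countable model of $U_1$ has a $\Sigma_{n+1}$-elementary extension (in the full language) to a model of $U_2$. Indeed, if $\theta\in\Pi_{n+2}$ and $\CT_n\not\vdash\theta$, write $\neg\theta$ as $\exists x\,\rho(x)$ with $\rho\in\Pi_{n+1}$, pick a countable $(M,T)\models\CT_n+\neg\theta$ and $a\in M$ with $(M,T)\models\rho(a)$; since $\Pi_{n+1}$-formulae are absolute along $\Sigma_{n+1}$-elementary extensions, any $(N,T')\succeq_{\Sigma_{n+1}}(M,T)$ modelling $\BSigma_{n+1}(T)$ still satisfies $\rho(a)$, hence $\neg\theta$; the converse direction is the usual compactness argument (the negation of a Boolean combination of $\Sigma_{n+1}$- and $\Pi_{n+1}$-sentences, with the parameters universally closed, is $\Pi_{n+2}$). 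By Lachlan's Theorem~\ref{th_lachlan} and chronic resplendence we may assume $(M,T)$ is countable and recursively saturated in the extended language. So everything reduces to: from such an $(M,T)\models\CT_n$, build a $\Sigma_{n+1}$-elementary extension $(N,T')\models\BSigma_{n+1}(T)$.

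For $n\geq 1$ this is quick. Over the extended language the compositional clauses of $\CT^-$ can be written as $\Pi_2$-sentences (replace each syntactic operation by its $\Delta_0$-graph; e.g.\ the quantifier clause becomes a universal sentence over a matrix consisting of a bounded block followed by a single unbounded existential quantifier, hence $\Pi_2$), so they are absolute along $\Sigma_{n+1}$-elementary extensions once $n+1\geq 2$; moreover each instance of $\textnormal{I}\Sigma_n(T)$ is a Boolean combination of a $\Sigma_{n+1}$- and a $\Pi_{n+1}$-sentence. Since $(M,T)$ is in particular a countable model of $\textnormal{I}\Sigma_n$ for the \emph{finite} language $\LPA\cup\{T\}$, the classical model-theoretic argument that $\textnormal{B}\Sigma_{n+1}$ is $\Pi_{n+2}$-conservative over $\textnormal{I}\Sigma_n$ (Paris--Kirby; see \cite{kaye}, \cite{hajekpudlak}) relativises to this language --- and is not disturbed by the extra, $\Pi_2$-absolute, axioms of $\CT^-$ --- yielding a $\Sigma_{n+1}$-elementary extension $(N,T')\models\textnormal{I}\Sigma_n(T)+\textnormal{B}\Sigma_{n+1}(T)$. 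By the absoluteness remarks $(N,T')\models\CT^-$, hence $(N,T')\models\CT_0+\Sigma_{n+1}\text{-}\Coll=\BSigma_{n+1}(T)$, and $\neg\theta$ survives.

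The case $n=0$ needs genuine work, because over the extended language a $\Sigma_1$-elementary extension need not preserve the witnessing direction $T\exists v\phi\rightarrow\exists x\,T\phi[\num{x}/v]$ of the quantifier clause --- that sentence is $\Pi_2$ and is not reducible to $\Pi_1$. Here I would follow the template of Lemma~\ref{lem_iteracja_lematu_o_kolekcji}: pass to the disintegration $M^+=(M,T,X_{T\phi})_{\phi\in M}$ (Definition~\ref{def_dezintegracja}), build a $\Sigma_1$-elementary end-extension of $M^+$ and iterate it $\omega_1$ times, obtaining an $\omega_1$-like $N^+=(N,\ldots,X'_{T\phi})_{\phi\in M}$ with $M^+\prec_{\Sigma_1}N^+$, which by Proposition~\ref{stw_kappa_like_daje_kolekcje} satisfies full collection; then read off $T'$ on $N$ exactly as in that lemma --- on sentences $\psi(\bar t)$ with $\psi\in\form_{\LPA}(M)$ and $\bar t$ closed terms of $N$ via the predicates $X'_{T\phi}$, closing under structural equivalence $\approx$ and invoking $\StrReg$ (which the construction produces automatically, and which one may assume of $(M,T)$, $\StrReg$ being innocuous --- cf.\ Theorem~\ref{th_ctminus_plus_int_plus_strreg_konserwatywne}) for the negation clause. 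Because $N$ end-extends $M$, every sentence of $N$ of standard depth has its structural template in $M$, so the compositional clauses for $T'$ are verified precisely as in Lemma~\ref{lem_iteracja_lematu_o_kolekcji}. The resulting $(N,T')$ models $\CT^-$, is $\omega_1$-like (so satisfies $\Sigma_1\text{-}\Coll$, indeed full collection), is $\Sigma_1$-elementary over $(M,T)$ (so models $\CT_0$ and retains $\neg\theta$), hence models $\BSigma_1(T)+\neg\theta$. Finally, arithmetical conservativity over $\CT_n$ is immediate: for arithmetical $\phi$ the sentence $T(\num{\phi})$ is atomic --- so lies in $\Pi_{n+2}$ --- and is $\CT^-$-provably equivalent to $\phi$, so an arithmetical non-theorem of $\CT_n$ is already a $\Pi_{n+2}$ non-theorem.

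The main obstacle is the $n=0$ construction. Unlike the hypothesis of Lemma~\ref{lem_enayat_visser_z_kolekcja}, the given $(M,T)\models\CT_0$ \emph{already} carries a fully compositional truth predicate (as $\CT_0\supseteq\CT^-$), so $T$ cannot be enlarged on $M$ itself; one must instead build the arithmetical end-extension and the extended truth predicate in tandem, and the disintegration/Enayat--Visser machinery is indispensable here because, by Smith's theorem \cite{smith}, a model of $\CT^-$ need not have \emph{any} proper end-extension. The delicate bookkeeping --- arranging that the disintegration of $(M,T)$ admits an $\omega_1$-like $\Sigma_1$-elementary end-extension while only $\Delta_0(T)$-induction is in hand, genuinely propagating $\Sigma_1$-elementarity over $(M,T)$ (an ingredient absent from the proof of Lemma~\ref{lem_iteracja_lematu_o_kolekcji}, where there is no $\Pi_{n+2}$-counterexample to protect), and verifying the witnessing clause of $\CT^-$ for the resulting $T'$ --- is the technical heart; for $n\geq 1$ none of it is needed, the content of the theorem there collapsing to the relativised classical conservativity together with the $\Pi_2$-absoluteness of $\CT^-$.
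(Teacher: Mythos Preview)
Your reduction and the $n\geq 1$ case are fine and match the paper: both form $I=\sup_N(M)$ for a proper elementary extension $N\succ(M,T)$, verify $I\prec_n N$, and obtain $\Sigma_{n+1}$-collection in $I$ via Lemma~\ref{lem_rozszerzenie_indukcja_daje_kolekcje}. Your $\Pi_2$-absoluteness remark is a legitimate shortcut for $n\geq1$, where $I\prec_1 N$ already transfers the $\CT^-$ axioms downward.

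The $n=0$ plan, however, has a genuine gap. The disintegration-plus-$\omega_1$-like construction you borrow from Lemma~\ref{lem_iteracja_lematu_o_kolekcji} produces only $(N,T')\models\CT^-{\res}M$: the predicates $X'_{T\phi}$ exist for $\phi\in M$ alone, so $T'$ is compositional only for sentences whose \emph{depth} lies in $M$, and formulae of depth in $N\setminus M$ are left untouched. In Lemma~\ref{lem_iteracja_lematu_o_kolekcji} this is harmless because a final resplendence step pulls everything back into the countable $M$; here you must remain in the uncountable $\omega_1$-like $N$, where no such step is available, so your assertion that ``$(N,T')$ models $\CT^-$'' is unjustified. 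There is a second issue: you write $M^+=(M,T,X_{T\phi})_{\phi\in M}$ with $T$ included, but then MacDowell--Specker does not apply, since $(M,T)$ has only $\Delta_0(T)$-induction; if instead you drop $T$ from $M^+$, the claimed $\Sigma_1$-elementarity of $(N,T')$ over $(M,T)$ in the language with $T$ no longer follows from elementarity in the disintegration language.

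The paper's route is simpler and uniform in $n$: the \emph{same} $\sup_N(M)$ construction works for $n=0$, once one proves the short Lemma~\ref{lem_odcinek_poczatkowy_generuje_ctminus} that $I=\sup_N(M)\models\CT^-$. The only nontrivial compositional clause is the witnessing direction of the quantifier axiom, and this is dispatched by invoking the equivalence $\CT_0\equiv\CT^-+\LocInd$ (Fact~\ref{fak_locind_equivalent_ct0}): in $M$ each restricted predicate $T_{d+1}$ is fully inductive and hence satisfies strong collection, so for any bound $d\in M$ the existential witnesses for all $\psi<d$ are uniformly bounded by some $c\in M$; this bound persists to $N$ by elementarity and thus lies in $I$. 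No disintegration, no $\omega_1$-like chain, and no $\StrReg$ are needed.
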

	
	 Although the proof follows essentially by the same pattern of reasoning as in the classical Paris--Friedmann result (see \cite{hajekpudlak}, Theorem 1.61, Chapter IV or \cite{kaye}, Corollary 10.9), one detail has to be taken care of. It is the content of the following lemma. Let us recall that if we have a model $M$ and a set $I\subseteq M$, then 
	\begin{center}
	$\sup_M(I) := \set{x\in M}{\exists b\in I\ \ M\models x<b}.$
	\end{center}
	If $M$ is a model of $\PA^-$ and $I$ is closed under multiplication, then $\sup_M(I)$ is a substructure of $M$. If additionally $M\models \CT^-$, then we can naturally view $\sup_M(I)$ as a substructure of $M$.
	\begin{lemma} \label{lem_odcinek_poczatkowy_generuje_ctminus}
		Suppose $M\preceq N$ are models of $\CT_0$. Then  $\sup_N(M)\models \CT^-$. Consequently, $\sup_N(M) \models \CT_0$.
	\end{lemma}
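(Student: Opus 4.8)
The plan is to take $K:=\sup_N(M)=\set{x\in N}{\exists b\in M\ N\models x<b}$, equip it with the truth predicate $T_K:=T\cap K$, and verify the axioms of $\CT^-$ in $(K,T_K)$; note that $(K,T_K)$ is an $\LPA\cup\{T\}$-substructure of $N$, that $K$ is an initial segment of $N$, and that $M$ is cofinal in $K$. The first, and key structural, observation is that $K\preceq N$ as $\LPA$-structures: since $M\preceq N$, every $\LPA$-definable function of $N$ with parameters in $M$ has, on a domain of the form $[0,b)$ with $b\in M$, a supremum which is $N$-definable from $b$ and therefore belongs to $M$; consequently $K=\bigcup_{b\in M}[0,b)^N$ is closed under all $\LPA$-Skolem functions of $N$. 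In particular $K\models\PA$, and $K$ is closed under every $\LPA$-definable total function — exponentiation, the iterated-exponential (tower) function, term evaluation, numeral- and substitution-functions, $\dpt$, and the syntactic operations $\neg$, $\vee$ — so all these are computed identically in $K$ and in $N$ for arguments in $K$.

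With this in hand, the compositional axioms $1$, $2$, $3$, $4$, $6$ of $\CT^-$ and the bounded induction scheme for $\LPA\cup\{T\}$ descend from $N$ to $(K,T_K)$ with essentially no extra work. Each of these is a $\Pi_1$-sentence over $\LPA\cup\{T\}$ once the $\LPA$-definable syntactic and evaluation functions are read as term operations, and — the $\LPA$-definable predicates being absolute under $\preceq_{\LPA}$, and atomic $T$-facts and bounded $T$-matrices being absolute between the initial segment $K$ (with $T_K=T\cap K$) and $N$ — such $\Pi_1$-sentences pass from $N$ down to $K$. For bounded induction one uses the least-number principle: a $\Delta_0(\LPA\cup\{T\})$-formula with parameters in $K$ defines the same subset of any $[0,a]^K=[0,a]^N$ ($a\in K$) in $K$ and in $N$, so the least element provided by $N$ already lies in $K$. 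This last point also delivers the "consequently" clause: once $(K,T_K)\models\CT^-$ we obtain $(K,T_K)\models\CT_0$.

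The only genuinely non-routine point is the quantifier axiom $5$, and within it the implication $T(\exists v\phi)\to\exists x\,T(\phi[\num x/v])$: the compositional structure of $N$ produces a witness, but only in $N$ — and, unlike for $\LPA$-formulas in the first step, there is no reason such a witness lies in the cut $K$. So fix $\phi\in\form^{\leq 1}_{\LPA}(K)$ with $\FV(\phi)\subseteq\{v\}$ and $T_N(\exists v\phi)$, pick $b\in M$ bounding (the code of) $\exists v\phi$, and suppose for contradiction that every $x$ with $T_N(\phi[\num x/v])$ lies outside $K$. Then for every $m\in M$,
\[
N\models\exists\psi<b\ \bigl(\psi\in\form^{\leq 1}_{\LPA}\wedge\FV(\psi)\subseteq\{v\}\wedge T(\exists v\psi)\wedge\forall c<m\ \neg T(\psi[\num c/v])\bigr),
\]
with $\phi$ itself as a witness; this sentence has parameters only in $M$, so by $M\preceq N$ it holds in $M$ for every $m\in M$. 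Since $M\models\CT^-$, each such $\psi$ in fact has a $T_M$-witness, which is then forced to be $\geq m$; hence, inside $M$, the bounded family $S:=\set{\psi<b}{\psi\in\form^{\leq 1}_{\LPA}\wedge\FV(\psi)\subseteq\{v\}\wedge T_M(\exists v\psi)}$ has the property that the least $T_M$-witnesses of its members are unbounded in $M$. It remains to contradict this, i.e. to prove that the "least $T_M$-witness" assignment is bounded on the bounded set $S$; boundedness then conflicts with the displayed unboundedness, forcing a witness of $\exists v\phi$ into $K$ after all.

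This final step is the main obstacle, and it is the reason the hypothesis is $\CT_0$ rather than merely $\CT^-$: full collection for the truth predicate is not on hand, so one has to extract the single instance needed — collection over the bounded domain $S$ for the $\Delta_0(\LPA\cup\{T\})$-condition "$c$ is a $T$-witness of $\exists v\psi$". The natural route exploits the arithmetical strength of $\CT_0$: by the characterisation $\CT_0\equiv\CT^-+\eqref{GR}$ and the partial truth predicates $\Tr_b$ of Theorem~\ref{th_arytmetyczne_predykaty_prawdy}, the predicate "$T(\psi[\num c/v])$" for $\psi<b$ (hence $\dpt(\psi)<b$) is, uniformly in $\psi$ and $c$, equivalent in $(M,T)$ to a genuinely arithmetical condition on $c$, whereupon ordinary $\PA$-collection inside $M$ bounds the witnesses; alternatively one argues by hand in $(M,T)$ with bounded induction only, using that $m\mapsto(\text{least }\psi\in S\text{ whose least }T_M\text{-witness exceeds }m)$ is nondecreasing with range inside the bounded set $S$. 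Carrying out this reduction carefully — in particular making the passage through $\Tr_b$ uniform in the possibly nonstandard parameter $b$ — is the real content of the lemma.
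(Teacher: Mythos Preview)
Your proof is correct and rests on the same idea as the paper's: both isolate the $\exists$-clause as the only nontrivial axiom and resolve it by obtaining strong collection for $T_d$ in $M$ from $\LocInd$ (equivalently from \ref{GR} together with the partial predicates $\Tr_d$, exactly your first route). The paper is somewhat more direct---it proves the bounding statement $\exists c\,\forall\psi<d\bigl(T(\exists v\psi)\to\exists x<c\,T\psi(\num x)\bigr)$ in $M$, transfers the witness $c\in M$ to $N$ by elementarity, and then descends to $\sup_N(M)$ by $\Delta_0(T)$-absoluteness---so your contradiction detour is unnecessary; and your second alternative (stabilisation of the nondecreasing map $m\mapsto\text{least }\psi$ with witness $>m$) would need more care, since eventual stabilisation of a bounded nondecreasing $\Delta_0(T)$-definable function is not obviously available from $\Delta_0(T)$-induction alone.
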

	\begin{proof}
		The only problematic issue is whether $\sup_N(M)$ satisfies the compositional axiom for the existential quantification, i.e. 
		\[  \forall \phi \in \form^{\leq 1}_{\PA} \ \ \Big(\FV(\phi) \subseteq \{v\} \rightarrow T \exists v \phi \equiv \exists x \ T \phi(\num{x}) \Big),\]
Fix $\phi$ and $v$ as above and put $I = \sup_N(M)$. Given that $I\subseteq N$, the non-obvious part is whether $I$ validates the implication $T \exists v \phi \rightarrow \exists x T \phi(\num{x})$. Working in $I$, assume $T(\exists v \phi)$. Let $d\in M$ be  greater than $\exists v \phi$ (as an element of $N$). Consider the following sentence
	\[\exists c\forall v <d\forall\psi <d \ \ \biggl( \bigl(\Var(v) \wedge \form_{\LPA}^{\leq 1} (\psi) \wedge T\exists v \psi\bigr) \rightarrow \exists x<c \ T\psi(\num{x})\biggr).\]
	The above is true in $M$, since it is equivalent to 
	\[\exists c\forall v <d\forall\psi <d \ \ \biggl( \bigl(\Var(v) \wedge \form_{\LPA}^{\leq 1} (\psi) \wedge T_{d+1}\exists v \psi\bigr) \rightarrow \exists x<c \ T_{d+1}\psi(\num{x})\biggr)\]
	which is an instance of the strong collection scheme for $T_{d+1}$ and each restriction of $T$ is fully inductive by $\LocInd$ which is equivalent to $\CT_0$ by Fact \ref{fak_locind_equivalent_ct0}. Fix $c\in M$ witnessing the existential quantifier. By elementarity 
	\[\forall v <d\forall\psi <d \ \ \biggl( \bigl(\Var(v) \wedge \form_{\LPA}^{\leq 1} (\psi) \wedge T\exists v \psi\bigr) \rightarrow \exists x<c \ T\psi(\num{x})\biggr)\]
	is true in $N$, but as it is a $\Delta_0(T)$ sentence with parameters from $I$, it holds in the latter model as well (by definition $I \subseteq_e N$). This ends the proof since $d$ majorizes both $v$ and $\phi$ in $\sup_N(M)$.
	\end{proof}

	The rest of the proof of Theorem \ref{th_besigmaen} can be carried out as in the case of arithmetical theories. We will use the following lemma as the key ingredient.
	
	\begin{lemma} \label{lem_rozszerzenie_indukcja_daje_kolekcje}
		Let $I \preceq_n N$ be two models in a language extending $\LPA$, possibly with additional predicates, where $n \in \mathbb{N}$ (we allow $n=0$). If $N$ satisfies $\Sigma_n$-induction, then $I$ satisfies $\Sigma_{n+1}$-collection. 
	\end{lemma}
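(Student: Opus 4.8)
The plan is to run the standard Paris--Friedman argument for $\Sigma_n$-elementary initial segments (cf. \cite{kaye}, Chapter 10, or \cite{hajekpudlak}, Chapter I), checking only that the extra predicates cause no trouble. The operative situation, and the one that occurs in the application, is that $I$ is a \emph{proper} $\Sigma_n$-elementary initial segment of $N$; note that $\preceq_n$ is also $\Pi_n$-elementarity, so $\Sigma_n$- and $\Pi_n$-formulae with parameters from $I$ are absolute between $I$ and $N$, and that $\Sigma_n$-induction in $N$ yields $\BSigma_n$ in $N$ (hence $\Sigma_n$ and $\Pi_n$ are closed under bounded quantification in $N$) together with the $\Pi_n$-least-number principle in $N$; for $n=0$ these closure facts are trivial, as $\Pi_0=\Sigma_0=\Delta_0$.

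First I would reduce an arbitrary instance of $\Sigma_{n+1}$-collection to one with a $\Pi_n$ matrix: write the $\Sigma_{n+1}$ formula as $\exists z\,\theta(x,y,z)$ with $\theta\in\Pi_n$ and contract the pair $(y,z)$ to a single variable $w$ via a pairing function (so that bounding $w$ bounds $y$), obtaining a $\Pi_n$ formula $\theta^\ast$. So it suffices to show: if $I\models\forall x<a\,\exists w\,\theta^\ast(x,w)$ with $a$ and the remaining parameters in $I$, then some $b\in I$ satisfies $I\models\forall x<a\,\exists w<b\,\theta^\ast(x,w)$. Next, transfer witnesses upward: for each $x<a$ in $N$ we have $x\in I$ (since $a\in I$ and $I$ is an initial segment), hence a witness $w\in I$ with $I\models\theta^\ast(x,w)$, and then $N\models\theta^\ast(x,w)$ by $\Pi_n$-absoluteness. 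Picking any $e\in N\setminus I$ (which exists as $I\subsetneq N$), all these witnesses lie in $I$, hence below $e$, so $N\models\forall x<a\,\exists w<e\,\theta^\ast(x,w)$. The set $\{t:N\models\forall x<a\,\exists w<t\,\theta^\ast(x,w)\}$ is $\Pi_n$-definable in $N$ (here is where $\BSigma_n$ in $N$ is used) and nonempty, so by the $\Pi_n$-least-number principle in $N$ it has a least element $b^\ast$.

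The crux is that $b^\ast$ lands in $I$ and works there. Since every $e\in N\setminus I$ belongs to that set, $b^\ast\le e$ for all such $e$; but $I$, being a substructure of $N$, is closed under addition, so $N\setminus I$ has no least element, which forces $b^\ast\in I$. Finally, for each $x<a$ in $I$ pick in $N$ a witness $w_x<b^\ast$ with $N\models\theta^\ast(x,w_x)$; since $b^\ast\in I$ and $I$ is an initial segment, $w_x\in I$, whence $I\models\theta^\ast(x,w_x)$ by $\Pi_n$-absoluteness, so $b^\ast$ is the required bound. The main obstacle is exactly this last step: one must resist transferring the \emph{bounded} statement $\forall x<a\,\exists w<b^\ast\,\theta^\ast$ down to $I$ wholesale (it need not be correctly interpreted there, since $I$ may lack collection) and instead transfer the individual witnesses $w_x<b^\ast$, which is legitimate precisely because $I$ is an initial segment. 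The remaining work is the routine complexity bookkeeping: checking that $\theta^\ast$ and the set above really are $\Pi_n$ given the collection available in $N$.
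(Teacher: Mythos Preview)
Your argument is correct and is exactly the standard Paris--Friedman argument the paper invokes: the paper gives no proof of its own, simply citing \cite{kaye}, Proposition~10.5, and remarking that the argument transfers to expanded languages with obvious modifications. You also correctly flag that the lemma tacitly assumes $I$ is a \emph{proper} initial segment of $N$ (the case actually used in the application), without which the conclusion can fail.
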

	The proof of this fact for the language of arithmetic is given e.g. in \cite{kaye}, Proposition 10.5.  It transfers to languages extending $\LPA$ after obvious modifications.
	\begin{proof}[Proof of Theorem \ref{th_besigmaen}]
	 In order to prove the theorem, it is enough to show that if $\phi \in \Sigma_{n+2}$ is satisfied in some model of $\CT_n$, then it is satisfied in some model of $\BSigma_{n+1}(T)$.
			
	So fix a model $M\models \CT_n + \phi$, where $\phi = \exists x \forall y \psi(x,y)$, $\psi\in \Sigma_n$. Let $N$ be an elementary extension of $M$ such that $\sup_N(M)\neq N$. Put $I = \sup_N(M)$. By elementarity, there exists $c\in M$ such that
	\[N\models \forall y \psi(c,y),\]
	If $\psi(x,y)$ is $\Delta_0$, then it automatically follows that the same is true in $I$, which, by Lemma \ref{lem_odcinek_poczatkowy_generuje_ctminus} is a model of $\CT_0$ and thus by Lemma \ref{lem_rozszerzenie_indukcja_daje_kolekcje}, a model of $\BSigma_1(T)$. This concludes the proof for $n=0$.

	 For greater $n$'s we have to show that $I\prec_n N$. This will conclude our argument since then again we obtain that $I \models \forall y \psi(c,y)$ by elementarity and that $I \models \BSigma_{n+1}(T)$ by Lemmata \ref{lem_odcinek_poczatkowy_generuje_ctminus} and \ref{lem_rozszerzenie_indukcja_daje_kolekcje}.
	
	We show $\Sigma_n$-elementarity: employing the Tarski--Vaught test, it is sufficient to show that for all $\theta(x,y)\in\Sigma_n$ and all $b\in I$
	\[N\models \exists x \theta(x,b) \Rightarrow \exists d\in I\ \  N\models \theta(d,b).\] 
	So fix $\theta(x,y)\in\Sigma_n$, $b\in I$ and assume $N\models \exists x \theta(x,b)$. Fix $e\in M$ such that $N\models b<e$. Since $M\models \CT_n$ we have an $f$ such that
	\[M\models \forall y<e\bigl(\exists x \theta(x,y)\rightarrow \exists x<f \  \theta(x,y)\bigr),\]
	hence the same is true in $N$ by elementarity. It follows that for some $d$, $N\models \theta(d,b) \wedge d<f$. Any such $d$ must belong to $\sup_N(M)$ which concludes the proof.  It follows that $\sup_N(M)$ is a $\Sigma_n$ elementary initial segment of $N\models \CT_n$. This concludes the proof.
\end{proof}

	Kotlarski and Ratajczyk, in \cite{kotlarski-ratajczyk}, gave a characterisation of arithmetical consequences of $\CT_n$ in terms of the transfinite induction. For each $k\in\mathbb{N}$ define (below, $\alpha^\beta$ denotes ordinal exponentiation):
	\begin{align*}
	\omega_0(k) &= k\\
	\omega_{m+1}(k) &= \omega^{\omega_m(k)}.
	\end{align*}
	Assume a standard coding of ordinals below $\phi_2(0)$ (see e.g. \cite{franzen}) and denote by $\TI(\alpha, \phi)$ the transfinite induction for $\phi$ up to $\alpha$, the formula
	\[\forall \beta\bigl(\forall \gamma\prec \beta \phi(\gamma) \rightarrow \phi(\beta)\bigr)\longrightarrow \forall \gamma\prec \alpha \phi(\gamma).\]
	
	\begin{theorem}[Kotlarski, Ratajczyk, \cite{kotlarski-ratajczyk}]
		For every $n$, the sets of arithmetical consequences of $\CT_n$ and $\PA + \set{\TI(\varepsilon_{\omega_n(k)}, \phi)}{\phi\in\Lang_{\PA}, k\in\mathbb{N}}$ coincide.
	\end{theorem}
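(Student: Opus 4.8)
The plan is to establish the two inclusions between the sets of arithmetical theorems separately, isolating the base case $n=0$ first. By Facts \ref{fakt_locind_equivalent_gr} and \ref{fak_locind_equivalent_ct0}, $\CT_0$ is $\CT^-$ together with the global reflection principle \ref{GR}, so by Kotlarski's theorem recalled above its arithmetical part coincides with $\PA + \{\Con(\Th_m) : m\in\omega\}$, that is, with $\omega$-many iterations of the uniform reflection rule over $\PA$; and by the classical relationship between iterated uniform reflection and transfinite induction (the Sch\"utte--Feferman analysis, in the precise form of Schmerl's fine-structure formula), this theory has exactly the arithmetical consequences of $\PA + \{\TI(\varepsilon_k,\phi) : k\in\omega,\ \phi\in\LPA\}$. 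Since $\omega_0(k)=k$, this is precisely the assertion for $n=0$, and the real content of the theorem is the way in which the ordinal $\varepsilon_{\omega_n(k)}$ grows with the induction parameter $n$.

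\emph{From $\CT_n$ to transfinite induction.} One must show $\CT_n\vdash\TI(\varepsilon_{\omega_n(k)},\phi)$ for each standard $k$ and each arithmetical $\phi$. The idea is to use the $\Sigma_n$-induction for the language with $T$ in order to iterate, transfinitely and internally, the passage ``from a truth class to its reflection''. Concretely, inside $\CT_n$ one defines by a $\Sigma_n$-recursion along an initial segment of a notation system a transfinite sequence $\tuple{\Tr_\alpha}_{\alpha\prec\tau}$ of \emph{ranked} partial truth predicates, each $\Tr_\alpha$ being explicitly definable from $T$ and the earlier ones, and proves by the available induction that each level is a correct truth predicate reflecting all earlier levels. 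This is a formalisation of a Gentzen-style ordinal assignment together with (partial) cut-elimination, with $T$ in the role of the ``oracle''; the crucial feature is that the number $n$ of nested induction quantifiers one is allowed to use is exactly what governs how high this hierarchy can climb, producing the tower $\omega_n(k)$ in the height of $\tau$ and hence $\varepsilon_{\omega_n(k)}$ as the order type for which transfinite induction becomes provable.

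\emph{From transfinite induction to $\CT_n$, and the main obstacle.} For the converse, fix an arithmetical $\sigma$ with $\CT_n\vdash\sigma$; the proof uses only finitely many instances of the axiom schemata of $\CT_n$, so there is a fixed bound $m$ on the complexity of the induction instances it invokes. It then suffices to construct, provably in $\PA + \{\TI(\varepsilon_{\omega_n(k)},\phi)\}$ for a suitable $k=k(m)$, a satisfaction class on a definable structure which validates the compositional axioms together with those $\Sigma_n$-induction instances; $\sigma$ is then true in the resulting definable model of a finite fragment of $\CT_n$, hence provable. That satisfaction class is itself built by transfinite recursion of length $\varepsilon_{\omega_n(k)}$ — starting from a Kotlarski--Krajewski--Lachlan style first approximation and successively ``inductivising'' it — and the transfinite induction that is assumed is precisely what verifies that the recursion is total and that the limit class has the required properties; alternatively one can iterate the Enayat--Visser/Lachlan machinery used earlier in the paper to build a recursively saturated model of $\CT_n$ and count the iterations. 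The delicate point, in both directions, is the exact ordinal bookkeeping: one must match the combinatorial strength of $\Sigma_n$-induction over $\LPA\cup\{T\}$ to \emph{precisely} $\varepsilon_{\omega_n(k)}$, neither overshooting in the model construction nor falling short in the transfinite-induction proof. Making the lower-bound direction actually reach $\varepsilon_{\omega_n(k)}$ and the upper-bound construction go through with exactly that much transfinite induction — which in practice amounts to setting up a dedicated infinitary calculus for $\CT_n$, with ranked truth predicates supplying the infinite branching, and proving a cut-elimination theorem whose length bounds genuinely yield the $\omega_n$-tower — is where essentially all of the work lies.
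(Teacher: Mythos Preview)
The paper does not prove this theorem at all: it is stated as a result of Kotlarski and Ratajczyk and simply cited from \cite{kotlarski-ratajczyk}, with no argument given. There is therefore no ``paper's own proof'' to compare your attempt against.

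As for the content of your proposal, it is not a proof but an outline, and you say so yourself: the final paragraph concedes that setting up the infinitary calculus, proving the matching cut-elimination bounds, and making the ordinals come out to exactly $\varepsilon_{\omega_n(k)}$ ``is where essentially all of the work lies.'' That is correct, and it is also essentially all of the theorem. Your base case $n=0$ is handled cleanly via the material already in the paper together with Schmerl's formula, but for $n\geq 1$ you have only gestured at the shape of the argument (ranked partial truth predicates, a Gentzen-style assignment, counting iterations) without supplying any of the definitions, the recursion, or the ordinal analysis. In particular, the phrase ``alternatively one can iterate the Enayat--Visser/Lachlan machinery \ldots\ and count the iterations'' is not a viable route to the upper bound: that machinery is model-theoretic and does not by itself produce the sharp proof-theoretic ordinal; the original Kotlarski--Ratajczyk argument genuinely requires the infinitary proof system and its cut-elimination. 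So the honest assessment is that your sketch identifies the right ingredients and the right difficulty, but does not constitute a proof of the inductive step in either direction.
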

	
	\begin{cor}
		For every $n$, the sets of arithmetical consequences of $B\Sigma_{n+1}(T)$ and $\PA + \set{\TI(\varepsilon_{\omega_n(k)}, \phi)}{\phi\in\Lang_{\PA}, k\in\mathbb{N}}$ coincide.
	\end{cor}
	
\section{End-extensions of models of $\CT^-$}

	One obvious strategy to show that $\CT^- + \Coll$ is conservative over $\PA$ would be to show that any countable model of $\CT^-$ has a countable end-extension
	 and thus build an $\omega_1$-chain of models of $\CT^-$. However, in general this strategy is doomed to fail, as witnessed by the following result of Smith (\cite{smith}, Theorem 4.3).
	
	\begin{theorem}[Smith] \label{tw_smith_o_rozszerzeniach_koncowych}
		There exists a countable model of $\CT^-$ which has no end-extension.
	\end{theorem}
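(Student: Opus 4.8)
The first point is that, unlike in the $\PA^-$ setting, no soft compactness argument can do the job: if one forms the theory $\CT^-\cup\mathrm{Diag}(M,T)\cup\{c>\underline m : m\in M\}$ over a language with a constant for each element of $M$ (here $\mathrm{Diag}$ is the atomic diagram), then compactness produces a \emph{proper} extension of $(M,T)$, but nothing forces it to be an \emph{initial} one, since for nonstandard $m$ the statement ``$x<\underline m\rightarrow\bigvee_{k<m}x=\underline k$'' is an infinitary disjunction rather than a first-order formula. A genuine model-theoretic construction, and a genuine obstruction, are therefore needed, which is exactly why Kaye's argument for $\PA^-$ does not transfer.

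The plan is to reduce the problem to the pure model theory of $\PA$ by means of Lachlan's theorem. If $(N,T')\models\CT^-$ then $N$ is recursively saturated (Theorem~\ref{th_lachlan}), and a routine downward L\"owenheim--Skolem argument shows that whenever $(M,T)\subset_e(N,T')$ (as structures in the language of $\CT^-$) with $M$ countable, then already $M$ has a countable recursively saturated proper end-extension as a model of $\PA$: take a countable recursively saturated $N_0\preceq N$ with $M\cup\{a\}\subseteq N_0$ for some $a\in N\setminus M$, and note that $M$ is an initial segment of $N_0$, being one in $N$ and $N_0\subseteq N$. So it suffices to build a single countable recursively saturated $M\models\PA$ with \emph{no} recursively saturated proper end-extension and then expand it to a model of $\CT^-$; the expansion exists because every countable recursively saturated model of $\PA$ is chronically resplendent (Theorem~\ref{th_bsr}) and $\CT^-$ is a computable theory consistent with its elementary diagram (Theorem~\ref{th_kkl}). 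The resulting $(M,T)\models\CT^-$ then has no proper $\CT^-$-end-extension whatsoever.

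The substantive step is the construction of such an $M$. One builds it as the union of a chain of finite approximations over $\omega$ stages, interleaving the (countably many) requirements that force recursive saturation together with a complete $\PA$-theory with a further countable list of requirements that jointly rule out every recursively saturated proper end-extension; isolating this second list is the delicate part. The natural strategy is that at stage $n$ one commits to $\PA$-definable facts about a single fresh element of $M$ --- most naturally the code of a long sequence, or of a formula of nonstandard syntactic depth, where one exploits the \emph{external} ill-foundedness of the subformula ordering on such a formula --- chosen so that no recursively saturated model can simultaneously end-extend the approximation built so far and realize the $n$-th candidate ``end-extension type''. Alternatively, one may carry out the diagonalization directly against $\CT^-$-end-extensions: rather than using the Enayat--Visser / Kotlarski--Krajewski--Lachlan machinery to show that a partial compositional truth predicate extends, one runs it in reverse, arranging the truth commitments inside $M$ so that over any proper end-extension the compositional clauses would over-determine the value of $T'$ on some nonstandard sentence.

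The main obstacle is exactly this construction, and the difficulty is one of reconciliation. The requirements that make $M$ recursively saturated --- and, in the direct approach, those that make $(M,T)$ a genuine model of $\CT^-$ via a Kotlarski--Krajewski--Lachlan-style build --- push towards generic behaviour, whereas the requirements defeating all end-extensions push towards rigid, anti-generic behaviour, and one must show the construction never gets stuck, i.e. that at every stage there is still room to meet the next requirement of either kind. It is here that the weakness of $\CT^-$ is used essentially: because there is no induction for the extended language, the value of a hypothetical $T'$ on a formula of nonstandard depth is not pinned down by any internal recursion, so the freedom needed to keep $(M,T)$ alive as a model of $\CT^-$ is real, while the same failure of induction is precisely what allows the nonstandard syntactic pathologies to encode the anti-end-extension requirements.
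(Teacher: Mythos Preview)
Your reduction to pure $\PA$ model theory cannot work. You propose to build a countable recursively saturated $M\models\PA$ with no countable recursively saturated proper end-extension, and then expand it to a model of $\CT^-$. But every countable recursively saturated model of $\PA$ \emph{does} have a countable recursively saturated proper (even elementary) end-extension: by resplendence, $M$ has an elementary initial segment $I\prec M$ which is itself recursively saturated, and by Paris--Friedman (Theorem~\ref{th_paris_friedman}) $I\simeq M$, so $M$ is isomorphic to a proper initial segment of itself. This is precisely the content of the remark following Visser's theorem at the end of Section~6. Hence no $M$ of the kind you are looking for exists, and your ``substantive step'' is vacuous. Your alternative suggestion of diagonalizing directly against $\CT^-$-end-extensions is not wrong in spirit, but as written it is only a sketch of a hope, not an argument.

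The paper's proof (following Smith) places the obstruction in the truth predicate $T$ rather than in the underlying arithmetical model $M$. One takes $(M,T)\models\CT^-$ in which some nonstandard formula $\phi$ defines, under $T$, a bijection from the whole of $M$ onto a bounded interval $[0,a]$; such $T$ exists by a variant of the Enayat--Visser construction. The first-order sentences expressing ``$\phi$ is a function with range contained in $[0,\underline{a}]$'', ``$\phi$ is injective'', and ``$\phi$ is total'' are then all in $T$. In any end-extension $(N,T')\models\CT^-$ these sentences remain in $T'$, so $\phi$ would define an injection from $N$ into $[0,a]$; but every element of $[0,a]$ is already the $\phi$-image of some element of $M$, contradicting properness. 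The point is exactly opposite to your diagnosis: by Visser's theorem the arithmetical reduct $M$ \emph{always} has an elementary end-extension carrying some $\CT^-$-predicate; the obstruction is that the particular $T$ cannot be carried along.
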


	\begin{proof}[Sketch of a proof]
		Take a model $(M,T) \models \CT^-$ in which there is a formula $\phi$ such that $\set{\tuple{x,y} \in M^2}{(M,T) \models T\phi(\num{x},\num{y})}$ is a bijection from $M$ to its proper initial segment $[0,a]$. (It can be shown that such a predicate $T$ exists by a modification of Enayat--Visser argument. A more complete argument can be found in the paper \cite{smith} of Smith.)
		
		Now, all the following sentences are in $T$:
		\begin{enumerate}
			\item $\forall x,y \Big(\phi(x,y) \rightarrow y< \num{a}\Big).$
			\item $\forall x_1, x_2,y \Big( \phi(x_1,y) \wedge \phi(x_2,y) \rightarrow x_1 = x_2\Big)$.
			\item $\forall x, y_1, y_2 \Big( \phi(x,y_1) \wedge \phi(x,y_2) \rightarrow y_1 = y_2 \Big)$.
			\item $\forall x \exists y \phi(x,y).$
		\end{enumerate}
		Therefore, if $(M,T) $ has an end-extension $(N,T')$, then all the above sentences will be in $T'$. This means that in $(N,T')$, the formula $\phi$ defines a bijection from $N$ to $[0,a]$. However, this is impossible, since all elements in $[0,a]$ are already values of elements from $M$ under this bijection.
	\end{proof}
	
	In the light of the above result, once could hope to find some extension $\Theta$ of $\CT^-$ such that:
	\begin{enumerate}
		\item $\Theta$ is conservative over $\PA$.
		\item Any countable model from $\Theta$ has an end-extension to a model of $\Theta$.
		\item $\Theta$ is closed under taking unions of end-extensions. 
	\end{enumerate}
	Our initial hope was that $\CT^- + \INT$ may fit the bill. Note that the lack of internal induction (or, in fact, of internal collection) is exactly the obstruction which makes it impossible for a model introduced by Smith to have an end-extension. Unfortunately, we did not manage to settle the question whether any countable model $(M,T) \models \CT^- + \INT$ has an end-extension to a model of $\CT^- + \INT.$ However, we managed to obtain the following partial result:
	
	\begin{theorem} \label{tw_modele_ctminus_int_rek_nas}
		If $(M,T) \models \CT^- + \INT + \StrReg$ is a countable model recursively saturated in the extended language, then there exists an end extension $(N,T') \models \CT^- + \INT + \StrReg$ which is also recursively saturated in the extended language.
	\end{theorem}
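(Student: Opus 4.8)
The plan is to run the strategy behind Lemmas~\ref{lem_enayat_visser_z_kolekcja} and~\ref{lem_iteracja_lematu_o_kolekcji}, but arranged so as not to disturb the truth predicate on $M$. The one genuinely new ingredient --- and the point where $\INT$ is used essentially, in stark contrast with Smith's models --- is the following claim: if $(M,T)\models\CT^- + \INT + \StrReg$, then its disintegration $(M,(X_{T\phi})_{\phi\in M})$ satisfies full collection for its (countable) language. The argument is purely syntactic. First, using the compositional axioms together with $\StrReg$ (which licenses the renaming of bound variables needed when one combines several $X_{T\phi_i}$'s), one proves by induction on the build-up of a formula that every formula $\theta(\bar v,\bar p)$ of the disintegration language is, provably in $\CT^- + \StrReg$, equivalent to $X_{T\Theta}(\bar v)$ for a suitable $\Theta\in M$ (in general nonstandard) built from the skeleton of $\theta$ with the parameters $\bar p$ substituted as numerals: arithmetical atoms are absorbed by axiom~2, Booleans by axioms~3--4, and quantifiers (bounded or not) by axiom~5. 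Hence full collection for $\theta$ reduces to full collection for a single $X_{T\Theta}$ with $\Theta(u,w)$ of two free variables. For such $\Theta$, the statement ``a collection bound for $X_{T\Theta}$ exists below $a$'' is itself of the form $T(\Xi_\Theta[\num{a}/z])$, where $\Xi_\Theta(z)$ is the one-variable $\LPA$-formula $\neg(\forall u<z\,\exists w\,\Theta)\vee(\exists b\,\forall u<z\,\exists w<b\,\Theta)$, by the compositional axioms. One checks directly, using only $\CT^-$, that $T(\Xi_\Theta[\num{0}/z])$ holds and that $T(\Xi_\Theta[\num{a}/z])\to T(\Xi_\Theta[\num{a+1}/z])$ holds for every $a$; applying $\INT$ to $\Xi_\Theta$ then gives $\forall a\,T(\Xi_\Theta[\num{a}/z])$, which is precisely full collection for $X_{T\Theta}$. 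I expect verifying this syntactic reduction, with all the variable-clash bookkeeping, to be the most laborious --- though routine --- part.

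With this in place, the construction parallels Lemma~\ref{lem_iteracja_lematu_o_kolekcji}. The structure $(M,(X_{T\phi})_{\phi\in M})$ is countable, recursively saturated (a parameter-definable expansion of the recursively saturated $(M,T)$) and, by the previous paragraph, satisfies full collection; so I would iterate Keisler's Theorem~\ref{th_keisler_kolekcja} $\omega$ times, realizing one recursive type at each step according to a fixed bookkeeping, obtaining a countable recursively saturated elementary end extension $(N,(X'_{T\phi})_{\phi\in M})$, which again satisfies full collection. As in Lemma~\ref{lem_iteracja_lematu_o_kolekcji} I then read off a partial truth predicate on $N$: for $\psi\in\Sent_{\LPA}(N)$ whose structural template lies in $M$ --- equivalently, with $\dpt(\psi)\in M$, since $N$ end-extends $M$ --- put $\psi\in T'_0$ iff $\psi\approx\phi(t_1,\dots,t_e)$ for some $\phi\in\form_{\LPA}(M)$ and closed terms $t_i\in N$ with $X'_{T\phi}(\tuple{\val{t_1},\dots,\val{t_e}})$, and put no sentence of depth outside $M$ into $T'_0$. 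A case analysis on the main connective (with $\StrReg$ needed exactly for negation, as in Lemma~\ref{lem_iteracja_lematu_o_kolekcji}) together with elementarity of the disintegration gives $(N,T'_0,M)\models\CT^-{\res}M + \StrReg + \INT$; reapplying the collection argument of the first paragraph inside $(N,T'_0)$ yields $(N,T'_0,M)\models\Coll$ as well; and $T'_0\cap M=T$, since $X'_{T\phi}\cap M=X_{T\phi}$ by elementarity and $\StrReg$.

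It remains to promote $T'_0$, which is compositional only for depths in $M$, to an unrestricted compositional predicate on $N$. Here I would invoke the $\INT$-analogue of Lemma~\ref{lem_enayat_visser_z_kolekcja} --- the variant behind Theorem~\ref{th_ctminus_plus_int_plus_strreg_konserwatywne}, discussed in the Appendix --- applied to the countable recursively saturated model $(N,T'_0,M)\models\CT^-{\res}M+\Coll+\StrReg$ with $M$ as the distinguished initial segment: it produces $T'\supseteq T'_0$ with $(N,T')\models\CT^- + \INT + \StrReg$, and, since the Enayat--Visser extension leaves the values on sentences of depth in $M$ untouched, $T'$ restricts to $T'_0$ there, so in particular $T'\cap M=T$. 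Thus $(N,T')$ is an end extension of $(M,T)$ modelling $\CT^- + \INT + \StrReg$, and, maintaining chronic resplendence throughout the construction exactly as in Lemma~\ref{lem_iteracja_lematu_o_kolekcji}, it can be taken recursively saturated in the extended language. The heart of the matter, as already indicated, is extracting full collection for the disintegration from the single weak principle $\INT$; once that is secured, everything else is a routine assembly of Keisler's theorem with the Enayat--Visser machinery we have already used.
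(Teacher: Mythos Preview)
Your syntactic reduction is correct and in fact delivers more than you use: once every disintegration-formula $\theta(x)$ is shown equivalent to $T(\Theta(\num{x}))$ for a suitable $\Theta\in\form_{\LPA}(M)$, $\INT$ gives full \emph{induction} for the disintegration directly, not merely collection, so MacDowell--Specker already applies and the Keisler route is unnecessary. The paper simply asserts that the disintegration satisfies full induction without spelling out the reduction, so your first paragraph is, if anything, more careful than the paper here.

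The genuine gap is the recursive saturation of $(N,T'_0,M)$, which you need in order to invoke Lemma~\ref{lem_rozszerzenia_model_ctminus_int}. You appeal to chronic resplendence ``exactly as in Lemma~\ref{lem_iteracja_lematu_o_kolekcji}'', but in that lemma resplendence is applied to the \emph{given} model $(M,T)$ to produce a new predicate on the same $M$; it cannot manufacture recursive saturation for a structure living on a genuinely larger $N$. The paper makes this obstruction explicit: even if the end-extended disintegration $(N,(X'_{T\phi}))_{\phi\in M}$ is taken recursively saturated, $(N,T'_0)$ need not be, because $T'_0(\psi)$ is not first-order definable from any finite collection of the $X'_{T\phi}$ --- recovering it requires locating the template $\widehat{\psi}\in M$ and consulting $X'_{T\widehat{\psi}}$, a different predicate for each $\psi$. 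The paper's repair is a resplendence manoeuvre of a different shape. Since $N$ is recursively saturated (Lachlan) and shares theory and standard system with $M$, Paris--Friedman gives an isomorphism onto some $(N,T^*)\simeq(M,T)$; now $(N,T^*)$ \emph{is} recursively saturated, and the already-constructed data $(M,T,f,T'_0)$ witness, over $\ElDiag(N,T^*)$, the consistency of a theory positing an initial segment $I$, an isomorphism $(N,T^*)\simeq(I,T_I)$, and a $\widetilde{T}\supseteq T_I$ with $(N,\widetilde{T},I)\models\CT^-{\res}I+\INT+\StrReg$. Chronic resplendence of $(N,T^*)$ realises this so that the full expansion is recursively saturated; then $(N,\widetilde{T},I)$ is recursively saturated as a reduct, and since $(I,T_I)\simeq(M,T)$ this is, up to isomorphism, the desired end extension --- and only now does Lemma~\ref{lem_rozszerzenia_model_ctminus_int} apply to promote $\widetilde{T}$ to an unrestricted predicate.
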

	Before we proceed to the proof, we need one more lemma:
	\begin{lemma} \label{lem_rozszerzenia_model_ctminus_int}
		Let $(M,T,I) \models \CT^- \res I + \StrReg + \INT$ be any countable model recursively saturated in the expanded language. Then there exists $T' \supset T$ such that $(M,T) \models \CT^- + \StrReg+ \INT$.
	\end{lemma}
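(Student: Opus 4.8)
The plan is to follow the recipe used for Lemma~\ref{lem_enayat_visser_z_kolekcja} in the Appendix, namely a resplendence argument feeding on an Enayat--Visser construction, while tracking internal induction through the construction in place of the collection of the disintegration; the extra bookkeeping needed to do this is exactly what turns the Enayat--Visser conservativity argument into the proof of Theorem~\ref{th_ctminus_plus_int_plus_strreg_konserwatywne}. First I would reduce the existence of $T'$ to a consistency statement. Since $(M,T,I)$ is countable and recursively saturated in the language $\LPA\cup\{T,I\}$, it is chronically resplendent, so it suffices to show that the computable $\LPA\cup\{T,I,U\}$-theory $\Phi$ which asserts that $U\supseteq T$ is a compositional truth predicate on all arithmetical sentences satisfying $\StrReg$ and $\INT$ is consistent with $\ElDiag(M,T,I)$. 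A model of $\ElDiag(M,T,I)+\Phi$ then yields, by resplendence, an interpretation of $U$ on $M$ itself, and this interpretation is the required $T'$.

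For the consistency I would run the Enayat--Visser construction starting from $(M,T,I)$, which already supplies a compositional truth predicate for the sentences whose syntactic depth lies in $I$. One builds an elementary chain $M=N_0\preceq N_1\preceq\cdots$ (elementary in $\LPA\cup\{T,I\}$, so each $N_k$ carries extensions of $T$ and $I$) together with an increasing family of partial compositional truth predicates $S_0\subseteq S_1\subseteq\cdots$, where $S_0$ is the given $T$ and $S_k$ decides the sentences of $N_k$ that have entered the bookkeeping by stage $k$, arranged so that every sentence of $N_\omega:=\bigcup_k N_k$ is eventually decided and all compositional clauses hold in the limit; the successive elementary extensions serve only to furnish the existential witnesses demanded by the quantifier clause for sentences already declared true. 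Then $(N_\omega,S_\omega)$ with $S_\omega:=\bigcup_k S_k$ is a model of $\CT^-$ elementarily extending $(M,T,I)$ with $S_\omega\supseteq T$, so it witnesses the consistency of $\Phi$ provided the construction is run keeping $\StrReg$ and $\INT$.

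Structural regularity is maintained by always deciding an entire $\approx$-class of sentences in one step of the bookkeeping, which never clashes with the compositional requirements since those relate a sentence only to sentences of strictly smaller syntactic depth and $\approx$ respects that stratification; this is the same device that already yields $\CT^-+\StrReg$. The step I expect to be the main obstacle is preserving $\INT$: one must ensure that in $(N_\omega,S_\omega)$ the set $\{x:S_\omega\phi(\num{x})\}$ is inductive for every $\phi\in\form^{\leq 1}_{\LPA}(N_\omega)$. For standard $\phi$ this is automatic, since compositionality forces that set to be $\LPA$-definable and hence inductive; for nonstandard $\phi$ one exploits the freedom left by the construction, enlarging the bookkeeping so that for each such $\phi$ a side requirement forbids ever creating a least counterexample to induction for $\{x:S\phi(\num{x})\}$; this stays consistent because $S_0=T$ already obeys $\INT$ on the depths it covers and this coherence propagates along the chain. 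This last part is precisely the argument given in the Appendix for Theorem~\ref{th_ctminus_plus_int_plus_strreg_konserwatywne}, here started from the partial predicate $T$ instead of from scratch. Finally, chronic resplendence, via the Barwise--Schlipf--Ressayre theorem (Theorem~\ref{th_bsr}), converts the consistency of $\ElDiag(M,T,I)+\Phi$ into the desired expansion of $(M,T,I)$, completing the proof.
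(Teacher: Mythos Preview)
Your high-level strategy matches the paper: reduce to consistency by resplendence, then run an Enayat--Visser chain extending the given partial predicate $T$ while tracking $\StrReg$; the handling of $\StrReg$ via deciding whole $\approx$-classes is exactly what the Appendix does.

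The gap is in your treatment of $\INT$. You describe the mechanism as ``enlarging the bookkeeping so that for each such $\phi$ a side requirement forbids ever creating a least counterexample,'' and you stipulate elementarity of the chain only in $\LPA\cup\{T,I\}$. This is not how the paper (or the argument it points to) secures $\INT$, and as stated it is not a proof: at any finite stage you have committed $S$ only on finitely many $\approx$-classes, so there is no well-defined ``least counterexample'' to forbid, and nothing in your setup prevents one from appearing in the limit model. The paper's mechanism is different and is precisely the device from the proof of Lemma~\ref{lem_enayat_visser_z_kolekcja}: one requires the chain to be elementary not merely in $\LPA\cup\{T,I\}$ but also in the disintegration predicates $X^j_\phi$ for $\phi$ in the previous model. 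At each stage the interpretation of $S_{j+1}$ on the finitely many relevant formulae is \emph{arithmetically definable} from $T_j$ together with finitely many of the $X^j_\phi$. Since the original $T$ satisfies $\INT$, these predicates jointly satisfy full induction, and anything definable from them does too; elementarity in the $X^j_\phi$ then carries this to the limit, yielding $\INT$ for the final $T'$. This is exactly parallel to how collection was preserved in the Appendix (where the same definability observation is made explicitly), with ``full induction'' replacing ``full collection'' throughout. Your writeup should replace the vague side-requirement idea with this disintegration-plus-elementarity argument.
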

	The lemma is proved by using resplendence and a slight modification of the Enayat--Visser argument. We will briefly discuss its proof in the Appendix.
	\begin{proof}[Proof of Theorem \ref{tw_modele_ctminus_int_rek_nas}.]
		Let $(M,T) \models \CT^- + \INT + \StrReg$ be countable and recursively saturated. As in the proof of Lemma \ref{lem_ct_minus_ograniczone_z_kolekcja_jest_konserwatywne}, let $X_{T\phi}, \phi \in \form_{\LPA}(M)$ be the disintegration of $T$. The model $(M,X_{T\phi})_{\phi \in M}$ satisfies full induction scheme, so it has an elementary end extension to a model $(N,X'_{T\phi})_{\phi \in M}.$ As in the proof of Lemma \ref{lem_iteracja_lematu_o_kolekcji}, from the predicates $X'_{\phi}$, we can obtain a predicate $T'$ such that $(N,T')\models\CT^- \res M + \INT+ \StrReg$. (One can check with a direct elementarity argument that the construction preserves $\INT$.) In particular, $N$ is recursively saturated. Observe that even if $(N, X'_{T\phi})_{\phi\in M}$ is recursively saturated in the expanded language (it can taken to be so), $(N, T')$ need not be, hence Lemma \ref{lem_rozszerzenia_model_ctminus_int} need not apply. However, by a resplendence argument, we can show that we can find $(N',T'')$ so that:
		\begin{itemize}
			\item The model $(N',M,T,T'')$ is recursively saturated.
			\item $M \prec_e N'$.
			\item $T \subset T''$.
			\item $(N',T'',M) \models \CT^- \res M + \INT + \StrReg$.
			\item $(M,T) \models \CT$. 
		\end{itemize}
More precisely, by Paris--Friedman Theorem \ref{th_paris_friedman}, $M$ and $N$ are isomorphic. Therefore, in $N$ there is a predicate $T^*$ such that $(N,T^*) \simeq (M,T)$. Let $f$ be an isomorphism between these models. The structure $(N,T^*,M,T,f,T')$ witnesses that the following theory is consistent with $\ElDiag(N,T^*)$ using additional predicates $I,T_I,g,\widetilde{T}$:
\begin{itemize}
	\item  $I$ is an elementary initial segment of $N$.
	\item $g:(N,T^*) \to (I,T_I)$ is an isomorphism.
	\item $(N,\widetilde{T}) \models \CT^- \res I + \INT + \StrReg.$
	\item $\widetilde{T} \supset T_I$.
\end{itemize}
In order to see that the theory is consistent, identify $I$ with $M$, $T_I$ with $T$, $g$ with $f$, and $\widetilde{T}$ with $T'$. By resplendence, it can be realised by interpreting $I,T_I,g,\widetilde{T}$ as relations in $N$ in such a way that the obtained model is recursively saturated. Since $(I,T_I)$ is isomorphic with $(M,T)$, the latter model has an end extension $(N',T'')$ with the desired properties.

	\begin{comment}
	However, by Paris--Friedman Theorem \ref{th_paris_friedman},
	the models $M$ and $N$ are isomorphic. So the model $(N,M,T,T')$ witnesses that $\ElDiag(M,T)$ is consistent with the following theory using additional symbols $f,I,T^*$: 
	\begin{itemize}
	\item $I$ is an elementary initial segment of $M$.
	\item $f:M \to I$ is an isomorphism.
	\item $(M,T^*) \models \CT^- \res I + \INT + \StrReg$.
	\item $T^* \supset f[T]$.
	\end{itemize}
	To see that this theory is consistent, we identify $(M,I,f[T],T^*)$ with $(N,M,T,T')$. 
	
	By chronic resplendence, this theory can be interpreted in $M$ in such a way that $(M,I,f[T],T^*)$ is recursively saturated. Again using isomorphism between $(N,M)$ and $(M,I)$, we see that there exists $T'' \supset T$ such that $(N,T'') \models \CT^- \res M + \INT + \StrReg$ and $(N,M,T,T'')$ is recursively saturated.
	
	By Lemma \ref{lem_rozszerzenia_model_ctminus_int}, we can find $T''' \supset T''$ such that $(N,T''') \models \CT^- + \INT + \StrReg$ and we can additionally require that it is recursively saturated. This concludes the proof. 
	\end{comment}
	
	\end{proof}
	
	We could hope that we could build $\omega_1$-like models of $\CT^-$ by taking chains of recursively saturated models of $\CT^- + \INT$. Unfortunately, there is a serious obstruction to this strategy: a union of recursively saturated models need not be recursively saturated and at this point we do not see a clear strategy to circumvent this problem.
	
	Another possible strategy which one could consider is to show that if $M \preceq_e N$ and $T \subset T'$ such that $(M,T) \models \CT^- + \INT $, $(N,T') \models \CT^- \res M + \INT$, then $T'$ can be extended to a predicate $T'' \supset T'$ such that $(N,T'') \models \CT^- + \INT$. I.e., one could hope that we can get rid of the resplendence argument in the above proof. However, we unfortunately know that this is in general impossible without further assumptions. Indeed, there exist countable models $M,N$ and predicates $T,T'$ such that:
	\begin{itemize}
		\item $M \preceq_e N$.
		\item $(M,T) \models \CT^- + \StrReg + \INT$.
		\item $(N,T') \models \CT^- \res M + \StrReg +\INT$.
		\item $T \subset T'$, 
	\end{itemize}
	in which $T'$  cannot be further extended to a predicate $T''$ satisfying $\CT^-$. The proof of this fact will appear in \cite{WcisloKossak}.
	
	Let us make one last remark: the example given by Smith shows how a model can fail to have an end-extension because of how its truth predicate looks like and it has nothing to do with the structure of the underlying arithmetical model. However, quite surprisingly if $(M,T)$ is a model of $\CT^-$, possibly uncountable, then $M$ has an elementary end-extension to a model $N$ which then can be expanded to a model of $\CT^-$. This has been observed by Albert Visser.\footnote{The original argument was slightly different, since it did not use the results of \cite{enayatlelykwcislo}. We are grateful for his permission to include here the proof of this unpublished result.} In the proof, we use the following result, originally proved essentially in \cite{lelyk_wcislo_models}. A (hopefully more perspicuous)  proof of this exact statement will appear in \cite{WcisloKossak}.\footnote{Actually, in order to prove the result below, we only need to have a predicate $T'$ which satisfies uniform Tarski biconditionals for standard formulae and full induction scheme for the extended language. This is exactly what Theorem 4.1 in \cite{lelyk_wcislo_models} gives us. However, we wanted to use a formulation more in line with the notation of this paper.}

	\begin{theorem} \label{tw_w_modelach_ctminus_jest_tez_utb}
		Let $(M,T) \models \CT^-$. Then there exists a $T'$ and a nonstandard $c \in M$ such that $(M,T') \models \CT \res [0,c]$.
	\end{theorem}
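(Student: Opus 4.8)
The plan is to reduce the theorem to a resplendence argument, verifying the required consistency statement by means of the definable partial satisfaction classes available over $\PA$. By Lachlan's Theorem~\ref{th_lachlan}, the hypothesis $(M,T)\models\CT^-$ ensures that $M$ is recursively saturated; in the main step this, together with countability of $M$, is all that will be used about $T$. Let $\Theta$ be the theory in the language $\LPA$ expanded by a fresh unary predicate $T'$ and a fresh constant $c$, whose axioms are: the clauses of Definition~\ref{def_ct_restr} with the set variable interpreted as the interval $[0,c]$; the full induction scheme for the language $\LPA \cup \{T',c\}$; and the sentences $c > \num{n}$ for every $n \in \omega$. A model of $\Theta$ whose arithmetical reduct is $M$ amounts exactly to a choice of $T' \subseteq M$ and a nonstandard $c \in M$ with $(M,T') \models \CT \res [0,c]$, so it is enough to expand $M$ to a model of $\Theta$.

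The first step is to check that $\Theta$ is consistent with $\ElDiag(M)$. (Note that conservativity of ``$I$ a nonstandard cut and $\CT\res I$'' over $\PA$ yields only consistency with the \emph{theory} of $M$; for the elementary diagram one argues directly.) By compactness it suffices to satisfy an arbitrary finite fragment; such a fragment constrains $c$ by only finitely many inequalities $c > \num{n}$, say for $n \leq N-1$, apart from finitely many sentences of $\ElDiag(M)$ and finitely many induction instances. I would satisfy it by the expansion $(M, \mathrm{Sat}_N, N)$, where $\mathrm{Sat}_N$ is the canonical $\LPA$-definable satisfaction class for sentences of syntactic depth at most $N$ --- the restriction to bounded depth of the partial truth predicates of Theorem~\ref{th_arytmetyczne_predykaty_prawdy}. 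For each \emph{standard} $N$, $\PA$ proves that $\mathrm{Sat}_N$ is compositional and term-extensional on sentences of depth $\leq N$, so $(M, \mathrm{Sat}_N, N) \models \CT^- \res [0,N]$; since $\mathrm{Sat}_N$ is arithmetically definable, every formula of $\LPA \cup \{T',c\}$ collapses over this expansion to an arithmetical formula with parameter $N$, so the full induction scheme for the extended language holds as well; and $c = N$ witnesses $c > \num{n}$ for $n \leq N-1$, while the arithmetical reduct being $M$ takes care of $\ElDiag(M)$. Hence every finite fragment of $\Theta \cup \ElDiag(M)$ is satisfiable, so $\Theta \cup \ElDiag(M)$ is consistent.

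Now $\Theta$ is a computable theory --- a finite list of axioms together with two computable schemes --- so when $M$ is \emph{countable} (and recursively saturated) the Barwise--Schlipf--Ressayre Theorem~\ref{th_bsr} expands $M$ to a model $(M,T',c) \models \Theta$. Then $c$ is nonstandard and $(M,T') \models \CT \res [0,c]$, which proves the theorem for countable $M$.

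The step I expect to be the main obstacle is the passage to \emph{uncountable} $M$. There resplendence is unavailable, so one cannot merely realise $\Theta$ over $M$; the predicate $T'$ must instead be built by hand, and it is here that the satisfaction class coming from $T$ is finally used essentially --- both to check the relevant conditions internally and, above all, to produce a \emph{nonstandard} depth cut $c$ at which a satisfaction class exists. The difficulty is intrinsic: for each \emph{standard} $n$ one checks, by an external induction on $n$ using the compositional axioms, that $T$ already agrees with $\mathrm{Sat}_n$ on sentences of depth $\leq n$, so $(M, T \res [0,n]) \models \CT \res [0,n]$ for standard $n$; but there is no canonical partial satisfaction class at nonstandard depth towards which this agreement could be pushed by overspill, and one needs an Enayat--Visser-style construction tailored to yield a \emph{fully inductive} restricted satisfaction class. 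This is precisely what is carried out in \cite{lelyk_wcislo_models}, and, in the present formulation, in \cite{WcisloKossak}.
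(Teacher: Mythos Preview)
The paper does not actually prove this theorem: it is stated as a black box, with the proof deferred to \cite{lelyk_wcislo_models} and \cite{WcisloKossak} (the accompanying footnote even notes that what is really needed is only a fully inductive predicate satisfying uniform Tarski biconditionals for standard formulae, which is Theorem~4.1 of \cite{lelyk_wcislo_models}). So there is no in-paper argument to compare against.

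That said, your treatment of the \emph{countable} case is correct and is more than the paper itself supplies. The compactness argument is sound: a finite fragment of $\Theta \cup \ElDiag(M)$ mentions only finitely many lower bounds on $c$ and finitely many induction instances, and interpreting $c$ by a standard $N$ and $T'$ by the $\LPA$-definable partial satisfaction predicate $\mathrm{Sat}_N$ disposes of all of them, since definability of $T'$ collapses the extended-language induction instances to arithmetical ones. Lachlan's theorem gives recursive saturation of $M$, so Barwise--Schlipf--Ressayre applies. This is a clean self-contained proof in the countable case.

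For the uncountable case you correctly diagnose the obstruction --- resplendence is unavailable, and there is no overspill target for the agreement $T \cap \{\dpt \leq n\} = \mathrm{Sat}_n$ at standard $n$ --- and you end up citing exactly the same two references the paper does. One small inaccuracy: the construction in \cite{lelyk_wcislo_models} is not really an Enayat--Visser-style union-of-chain argument; per the paper's own footnote it produces a fully inductive $\UTB$-predicate, from which the $\CT\res[0,c]$ statement is then extracted. But this is a matter of labelling rather than a gap in the argument.
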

	\begin{theorem}[Visser]
		Suppose that $(M,T) \models \CT^-$. Then there exists an elementary end-extension $M \preceq_e N$ and a $T'$ such that $(N,T') \models \CT^-$. 
	\end{theorem}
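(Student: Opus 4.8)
The plan is to exploit the fact that, although $T$ itself carries no induction, Theorem~\ref{tw_w_modelach_ctminus_jest_tez_utb} lets us trade it for an auxiliary predicate on the same model which \emph{does} satisfy full induction in the extended language; once we have that, a model with full induction can be properly elementarily end-extended by MacDowell--Specker, and the resulting end-extension turns out to be recursively saturated for free, hence expandable to a genuine $\CT^-$. Note that the truth predicate must change along the way, in accordance with Smith's Theorem~\ref{tw_smith_o_rozszerzeniach_koncowych}.

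First I would apply Theorem~\ref{tw_w_modelach_ctminus_jest_tez_utb} to $(M,T)$ (the case $M\cong\mathbb{N}$ being trivial, as any countable recursively saturated model of $\Th(M)$ then works), obtaining a nonstandard $c\in M$ and $S\subseteq M$ with $(M,S)\models\CT\res[0,c]$. By the definition of $\CT\res[0,c]$, the structure $(M,S)$ -- viewed as a model of the language $\LPA\cup\{T\}$ with $c$ as a distinguished parameter -- satisfies the full induction scheme for that language. Hence MacDowell--Specker (Theorem~\ref{th_macdowell_specker}) yields a proper elementary end-extension $(N,S')\succ_e(M,S)$ of the same cardinality; taking $\LPA$-reducts, $M\preceq_e N$ is a proper elementary end-extension of models of $\PA$, which is exactly the kind of extension we are after.

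It remains to equip $N$ with a full compositional truth predicate. The axioms of $\CT^-\res[0,c]$ are first-order sentences of $\LPA\cup\{T\}$ with parameter $c$, so by elementarity $(N,S')\models\CT^-\res[0,c]$, and $c$ is still nonstandard in $N$. By Lachlan's Theorem~\ref{th_lachlan} (the clause about $\CT^-\res[0,c]$), $N$ is therefore recursively saturated. If $M$, and hence $N$, is countable, this finishes the argument: a countable recursively saturated model of $\PA$ always expands to a model of $\CT^-$ -- this is the model-theoretic content of the Kotlarski--Krajewski--Lachlan theorem (Theorem~\ref{th_kkl}), and it is also a special case of Lemma~\ref{lem_enayat_visser_z_kolekcja}, taking the initial segment trivial and $S$ the arithmetically definable truth predicate for atomic sentences. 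Fixing such a $T'$ gives $(N,T')\models\CT^-$ with $M\preceq_e N$ proper, as required.

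For arbitrary, possibly uncountable $M$ the two initial steps go through verbatim, producing a recursively saturated proper elementary end-extension $N$ of $M$; the delicate point is only the last one, since for uncountable models recursive saturation no longer automatically entails resplendence. One has to argue that $N$ \emph{itself} -- not merely some elementary extension of it -- can be expanded by a predicate satisfying the finitely many first-order compositional axioms; consistency of these axioms with $\ElDiag(N)$ is clear, since every finite fragment of $\ElDiag(N)$ is realised in a countable recursively saturated model and the latter expands to $\CT^-$, but descending from ``consistent with $\ElDiag(N)$'' to ``realised in $N$'' is where a resplendence argument is needed. This resplendence step in the uncountable case is the main obstacle; by contrast, the end-extension itself is essentially free, precisely because we replaced the induction-free predicate $T$ by the fully inductive predicate $S$ before invoking MacDowell--Specker.
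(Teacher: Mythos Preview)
Your argument for countable $M$ is correct, and the paper itself notes essentially this route as an easy alternative in that case: Lachlan's theorem gives recursive saturation, a countable recursively saturated model of $\PA$ has a proper recursively saturated elementary end-extension, and the latter expands to $\CT^-$ by resplendence. Your variant via MacDowell--Specker applied to the auxiliary $(M,S)\models\CT\res[0,c]$ is a perfectly good way to manufacture the end-extension and is in the same spirit.

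The gap you flag in the uncountable case is real, and it is precisely where the paper's proof diverges from yours. The paper does \emph{not} try to expand the end-extension $N$ externally via any resplendence-type argument. Instead, it works \emph{inside} the fully inductive model $(M,S)\models\CT\res[0,c]$ and uses Arithmetised Completeness: one considers the $(\LPA\cup\{T\})$-definable set $\Theta$ consisting of the compositional axioms of $\CT^-$ together with all arithmetical sentences (possibly nonstandard) in $S$; the formalised Enayat--Visser conservativity argument, carried out within $\PA$, shows that $\Theta$ is consistent and moreover yields an internal model $(N,T')\models\CT^-$ definable in $(M,S)$. The standard machinery of arithmetised completeness then makes $N$ an end-extension of $M$, and elementarity of $M$ in $N$ follows because $S$ decides correctly all standard formulae with parameters from $M$. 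This route is cardinality-free: the end-extension and its truth predicate are produced simultaneously and definably, so no appeal to resplendence is needed.

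In short: your MacDowell--Specker step buys you the end-extension but then leaves you stranded when $M$ is uncountable; the paper's trick is to let the model build the end-extension \emph{and} the new truth predicate internally in one go.
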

	\begin{proof}[Sketch of a proof]
		Let $(M,T_0) \models \CT^-$ and let $(M,T) \models \CT \res [0,c]$ with $c$ nonstandard which exists by Theorem \ref{tw_w_modelach_ctminus_jest_tez_utb}. 
		
		Consider the $\Lang_{\PA}\cup\{T\}$-definable set $\Theta$ containing the compositional axioms of $\CT^-$ and all arithmetical sentences $\phi$, possibly nonstandard, such that $\phi \in T$. Within $\PA$, we can formalise the Enayat--Visser conservativity proof for $\CT^-$ over $\PA$ and show that $\Theta$ is consistent. More precisely, for every $n$,  (the straightforward arithmetisation of) the following assertion is provable in $\PA$ (see \cite{enayatlelykwcislo}, Lemma 4.3 which is formulated for the language of arithmetic but generalises to other theories with full induction extending $\PA$):\footnote{Different approaches to the formalisation of the conservativity of $\CT^-$ within $\PA$ were presented in \cite{leigh} and \cite{enayatvisser2}.}
		%%%%%%%%%%%%%%% BW 13.02 Nie pomyliłem się powyżej? Miałeś na myśli ten lemat, w numeracji JSL?
		\begin{center}$(*)$
		If $K\models I\Delta_0+\exp$ is any $\Delta_n$ model, then there exists a $\Delta_{n+1}$ model $(N,T')\models \Th$ and $K$ is an $\Lang_{\PA}$-elementary submodel of $N$. 
		\end{center} 
		where $\Th$ denotes the compositional axioms for the truth predicate from $\CT^-$. Using cut-elimination one checks that (the set of sentences satisfying) $T$ is consistent and then applies $(*)$ to a $\Delta_2$ definable model $K$ of $T$ (viewed as a set of  sentences). We can fix $(N,T')$ given by the above claim.

		 Such $(N,T')$ satisfies $\CT^-$, $N$ is an end-extension of $M$ and for every standard formula $\phi(x_1, \ldots, x_n)$ and elements $a_1, \ldots, a_n \in M$, $N \models \phi(a_1, \ldots, a_n)$ iff $(M,T) \models T \phi(\num{a_1}, \ldots, \num{a_n})$ iff $M \models \phi(a_1, \ldots, a_n)$. This guarantees that $M \preceq N$ which concludes the proof.\footnote{More information on Arithmetised Completeness and the fact that inner models in models of $\PA$ give rise to end-extensions may be found in \cite{kaye}, Section 13.2.}
	\end{proof}
	
	Note that in the above theorem, we do not make any assumptions on the cardinality of $M$. For countable models, the theorem may be proved in a much easier way, since by Lachlan's Theorem \cite{lachlan}, for every model $(M,T)\models \CT^-$, the underlying arithmetical model $M$ is recursively saturated. By a theorem of Friedman, every countable recursively saturated model $M$ has an elementary end extension to another such model $N$.\footnote{This can be seen as follows: by resplendence, $M$ has an initial segment $I \preceq M$ such that $I$ is also recursively saturated. By Paris--Friedman Theorem \ref{th_paris_friedman}, $I \simeq M$. Since $M$ is isomorphic to $I$ and $I$ has an elementary recursively saturated end-extension, the same is true for $M$.} This model, in turn, can be expanded to a model of $\CT^-$ by resplendence of countable recursively saturated models, since $\CT^-$ is conservative over $\PA$ by Theorem \ref{th_kkl}.

	\section{Appendix}
	
	In this article, we made use of some facts which relied on modification of the Enayat--Visser proof. The required changes are rather straightforward, and therefore we moved the proofs to the Appendix. We tried to make the presentation reasonably self-contained, but the reader might want to consult the original paper \cite{enayatvisser2}.
	
	Let us begin with a proof of Lemma \ref{lem_enayat_visser_z_kolekcja}. We restate it for the convenience of the reader.
	
	\begin{lemma} 
			Suppose that $(M,T,I) \models \CT^-{\res} I+ \Coll + \StrReg$ is a countable model recursively saturated in the extended language with $I$ an initial segment, possibly empty. Then there exists $T' \supset T$ such that $(M,T') \models \CT^- + \StrReg$ and, moreover, the model $(M,T,X_{T'\phi})_{\phi \in M}$ satisfies full collection scheme, where the family $\{X_{T'\phi}\}_{\phi\in M}$ is the disintegration of $T'$.
	\end{lemma}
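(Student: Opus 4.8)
The plan is to reduce the lemma, by chronic resplendence, to a single consistency statement, and then to establish that statement by running the Enayat--Visser conservativity argument along a chain that is forced to respect collection.

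\emph{Reduction by resplendence.} Since $(M,T,I)$ is countable and recursively saturated in $\LPA\cup\{T,I\}$ and satisfies a theory extending $\PA$, chronic resplendence (Theorem~\ref{th_bsr}) reduces the lemma to the following claim: there is a \emph{computable} theory $\Phi$ in the language $\LPA\cup\{T,I,T'\}$, consistent with $\ElDiag(M,T,I)$, every model of which (together with $\ElDiag(M,T,I)$) has the form $(\bar M,\bar T,\bar I,\bar T')$ with $\bar T\subseteq\bar T'$, $(\bar M,\bar T')\models\CT^-+\StrReg$ and $(\bar M,\bar T,X_{\bar T'\phi})_{\phi\in\bar M}\models\Coll$. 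For $\Phi$ I would take: the sentence $\forall x\,(Tx\to T'x)$; the finitely many compositional axioms of $\CT^-$ and the axiom $\StrReg$, all written with $T'$ in place of $T$; and, for every template $\theta(x,y,\bar p\,;R_1,\dots,R_k)$ in $\LPA\cup\{T\}$ with distinguished unary predicate slots $R_i$, the single sentence obtained by prefixing the corresponding collection axiom for $\theta$ with $\forall\psi_1,\dots,\psi_k\in\form_{\LPA}$ and reading each $R_i(z)$ as $\Val(z,\psi_i)\wedge T'(\psi_i[z])$. As $X_{\bar T'\psi}$ is by definition the relation defined by $\Val(\cdot,\psi)\wedge T'(\psi[\cdot])$, any model of $\ElDiag(M,T,I)+\Phi$ has the required form, and $\Phi$ is manifestly computable.

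\emph{The Enayat--Visser chain.} To build a model of $\ElDiag(M,T,I)+\Phi$ I would start from $(M,T,I)$ and construct an elementary chain $(M,T,I)=(M_0,T_0,I_0)\preceq(M_1,T_1,I_1)\preceq\cdots$ in $\LPA\cup\{T,I\}$, together with partial compositional predicates $T_0^*\subseteq T_1^*\subseteq\cdots$, $T_n^*\subseteq M_n$, $T_n^*\supseteq T_n$, with $(M_n,T_n^*)\models\CT^- \res [0,c_n]+\StrReg$ for a sequence $(c_n)$ chosen by a bookkeeping that guarantees every sentence of $\bar M:=\bigcup_n M_n$ eventually has depth below some $c_n$. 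Taking $\bar T'=\bigcup_n T_n^*$, $\bar T=\bigcup_n T_n$, $\bar I=\bigcup_n I_n$ yields $(\bar M,\bar T')\models\CT^-+\StrReg$ with $\bar T\subseteq\bar T'$ and $(M,T,I)\preceq(\bar M,\bar T,\bar I)$, so all of $\ElDiag(M,T,I)$ holds. The successor step -- extending a partial compositional truth predicate to one compositional on a deeper cut, over an elementary extension in the full language, while committing only whole $\approx$-classes of sentences so as to preserve $\StrReg$ -- is the Enayat--Visser ``pullback of partial satisfaction classes'', carried out exactly as in \cite{enayatvisser2} with the routine $\StrReg$ modification.

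\emph{Forcing collection, and the main difficulty.} What is special here is that the chain must simultaneously meet every collection instance of $\Phi$. Each such instance refers to $\bar T'$ only through finitely many projections $X_{\bar T'\phi_i}$, whose value on any sentence is, by compositionality, pinned down as soon as the current partial predicate is compositional past the fixed depth of $\phi_i$, while the partial predicate $\bar T$ is fixed once and for all. So, enumerating the collection requirements in advance, one has to arrange at the stage at which a given requirement becomes determined that the chosen extension of the truth predicate validates it. The room for this comes from the Enayat--Visser freedom in assigning truth values on the freshly added depths, together with the fact that each $(M_n,T_n^*,I_n)$, being an elementary extension of $(M,T,I)$, still models $\Coll$ for $\LPA\cup\{T,I\}$ and hence already supplies bounds for the part of the requirement not involving $\bar T'$; this is the collection-analogue of the internal-induction modification alluded to after Theorem~\ref{th_ctminus_plus_int_plus_strreg_konserwatywne}. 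With all requirements met, $(\bar M,\bar T,\bar I,\bar T')\models\ElDiag(M,T,I)+\Phi$, and chronic resplendence applied to $(M,T,I)$ produces the $T'\subseteq M$ required by the lemma. I expect the genuinely nontrivial point to be precisely this last step: weaving full collection for the disintegrated structure through the Enayat--Visser pullback so that it survives in the limit; the resplendence reduction, the $\StrReg$ bookkeeping, and the passage to the limit are all routine.
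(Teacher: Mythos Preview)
Your resplendence reduction is correct and matches the paper, but there is a real gap in the third paragraph. You build the chain elementary only in $\LPA\cup\{T,I\}$ and propose to enumerate collection requirements and meet them one at a time using the Enayat--Visser freedom on newly added depths. But a collection axiom for $\theta(x,y,\bar p\,;X_{T'\psi_1},\dots,X_{T'\psi_k})$ is a universal statement over \emph{all} parameters $a,\bar p$ in the limit model; once the partial predicate is compositional past $\max_i\dpt(\psi_i)$ the sets $X_{T'\psi_i}$ are fully determined and there is no freedom left, yet you must still validate the axiom for parameters that appear only at later stages. Since your chain is not elementary in the predicates $X_{T'\psi_i}$, an instance true in $(M_n,T_n,X_{T_n^*\psi_i})$ has no reason to persist in $(M_{n+1},T_{n+1},X_{T_{n+1}^*\psi_i})$ or in the union. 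That $(M_n,T_n,I_n)\models\Coll$ in $\LPA\cup\{T,I\}$ is of no help, because $\theta$ genuinely mixes $T$ with the $X_{T'\psi_i}$.

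The paper's fix---and this is exactly the ``admittedly subtle'' departure from \cite{enayatvisser2} that it flags---is to enlarge the language of the chain: at stage $j$ one carries predicates $X^j_\phi$ for every $\phi\in M_{j-1}$ (the disintegration of the current partial satisfaction predicate) and demands elementarity in all of them jointly with $T_j$. Collection for $T_j$ together with any finite tuple of $X^j_\phi$ is then an \emph{axiom} of the theory $\Theta_j$ defining the next stage, so it holds throughout and passes to the union by elementarity. The real work is the consistency of $\Theta_j$ (Lemma~\ref{lem_niesprzecznosc_teoryj_EV}): given finitely many formulae $\phi_1,\dots,\phi_n$, one interprets the new $S_{j+1}$ inside $(M_j,T_j,X^j_\phi)_{\phi\in M_{j-1}}$ itself, arithmetically definably from $T_j$ and finitely many of the old $X^j_{\widehat{\phi_i}}$ (using structural templates to align $\approx$-classes). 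Since those predicates satisfy collection by the induction hypothesis, anything definable from them does too. So the mechanism that carries collection through is \emph{definability} of each finitary fragment of the new predicate from the old disintegration, not freedom in assigning fresh truth values.
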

	
	\begin{proof}
		Let $(M,T,I) \models \CT^- \res I + \Coll + \StrReg$. We will find an extension:
		\begin{displaymath}
		(M,T,I) \subset (M^*,T^*,I^*,T')
		\end{displaymath}
		such that 
		\begin{enumerate}
			\item $(M,T,I) \preceq (M^*,T^*,I^*)$ is an elementary extension,
			\item $(M^*,T') \models \CT^- + \StrReg$,
			\item $T^* \subseteq T'$ and
			\item for every $\phi_1, \ldots, \phi_n \in \form_{\LPA}(M^*)$, the predicates $X_{T'\phi_i}$ satisfy full collection jointly with $T^*$.
		\end{enumerate}  
		Since $(M,T,I)$ is resplendent, this will conclude our proof. Note that the predicates $X_{T'\phi}$ are not present in the language, but point 4 in the above list can be expressed in terms of the predicate $T'$ alone by quantifying universally over the formulae $\phi_1, \ldots, \phi_n$. (Which is important, since otherwise the resplendence argument would not be valid.)

		We will construct an $\omega$-chain of models $(M_j,T_j,I_j,X^j_{\phi},S_j)_{\phi \in M_{j-1}}$ using auxiliary predicates $X^j_{\phi}$ such that for any $k$, the chain $\{(M_j,T_j, I_j,X^j_{\phi})\}_{j\geq k, \phi \in M_{k-1}}$  is elementary. Finally, we will set $\bigcup M_j = M^*$, $\bigcup I_j = I^*$, $\bigcup T_j = T^*$. The predicates $S_j$ will be satisfaction predicates compositional for formulae from the model $M_{j-1}$ if $j>0$. Finally, we will set:
		\begin{displaymath}
		T' = \set{\phi \in \Sent_{\LPA}(M^*)}{\exists j \ \phi \in \Sent_{\LPA}(M_j) \wedge (\phi, \emptyset) \in S_{j+1}}.
		\end{displaymath}
		
		At the initial step, we set $M_0 = M, I_0 = I, T_0 = T'_0 = T$. By convention $M_{-1} = \emptyset$. At each step, we inductively take $(M_{j+1},T_{j+1},I_{j+1},X^{j+1}_{\phi},S_{j+1})_{\phi \in M_{j}}$ to be the model of the theory $\Theta_{j}$ expanded with extra predicate consisting of the following axioms:
		\begin{itemize}
			\item $\ElDiag(M_j,T_j,I_j,X^j_{\phi})_{\phi \in M_{j-1}}$.
			\item (Compositional axioms) $\Comp(\phi)$, for $\phi \in \form_{\LPA}(M_j)$ which state that $S_{j+1}$ behaves compositionally with respect to $\phi$. Shortly, we will give a more precise definition.
			\item ($T'$ contains $T$) $\forall x T_{j+1}(x) \rightarrow S_{j+1}(x, \emptyset).$
			\item (The sequence $S_j$ stabilises) $S_{j+1}(\phi,\alpha)$, where $\phi \in M_{j-1}$ and $(\phi,\alpha) \in S_j$.
			\item (The definition of $X^{j+1}_\phi$) $X^{j+1}_{\phi}(\alpha) \equiv \alpha \in \Val(\phi) \wedge S_{j+1}(\phi,\alpha)$, $\phi \in \form_{\LPA}(M_{j})$. 
			\item All the instances of the collection scheme in the language with arithmetical symbols and the predicates $T_{j+1}, X^{j+1}_{\phi}$, where $\phi \in \form_{\LPA}(M_j)$.
			\item (Extensionality Axiom) $\forall \phi \in \form_{\LPA} \forall \alpha \in \Val(\phi) \ S_{j+1}(\phi,\alpha) \equiv S_{j+1}(\phi[\alpha], \emptyset)$.
			\item (Structural Regularity Axiom) $\forall \phi, \psi \in \Sent_{\LPA} \Bigl( \phi \approx \psi \rightarrow S_{j+1}(\phi,\emptyset) \equiv S_{j+1}(\psi,\emptyset)\Bigr).$ 
		\end{itemize}
	Recall that the structural equivalence relation $\phi \approx \psi$ was introduced in Definition \ref{def_structural_equivalence}. Notice one important (but admittedly subtle) difference between this formulation and the original proof of \cite{enayatvisser2}. In the original version, Enayat and Visser required that the constructed chain be elementary only in the signature of the base language. Here, we additionally require elementarity with respect to the predicates $X^j_{\phi}$. . Let us explain a bit what actually happens.
	
	In the $j$-th step, we introduce a predicate $S_{j+1}$ which is compositional for the formulae from the current model $M_j$. Together with this model, we introduce a family of predicates $X^{j+1}_{\phi}$ which are the disintegration of the predicate $S_{j+1}$, but defined only for formulae in the current model $M_j$. Notice that by elementarity, we actually require that $T_{j+1}$ behaves like $T_j$, $I_{j+1}$ behaves like $I_j$, and crucially, $X^{j+1}_{\phi}$ behaves like $X^j_{\phi}$ whenever $\phi \in M_{j-1}$. In other words: we really do require more regularity that in the usual Enayat--Visser construction. For instance, if $\phi(v) \in \form_{\LPA}(M_j)$ has only one free variable, and $x$ is the smallest element satisfying $\phi$ under $S_{j+1}$, (the smallest element such that $S_{j+1}(\phi,\alpha)$ holds, where $\alpha(v) = x$), then by elementarity requirement for $X^{j+1}_{\phi}$, $x$ will stay the smallest such element throughout the whole construction. On the other hand, in the original construction of Enayat--Visser, we essentially only require that $S^{j+1}(\phi,\alpha)$ \emph{still holds} in the later stages. This is enough to guarantee that the compositional conditions hold in the final model, but would not suffice to guarantee that the disintegration of the final model satisfies full collection. To this end, we need some elementarity, and we introduce the predicates $X^j_{\phi}$ to conveniently describe what amount of elementarity is needed. One last remark for the scrupulous reader: from the strict reading of our notation, it follows that there are lots of predicates $X^j_c$, where $c$ is not a formula. We keep them, as we do not want to overload our notation, but they are harmless to the construction.

	The compositional axioms $\Comp(\phi)$ are defined as the conjunction of the following formulae:
		\begin{itemize}
			\item $\forall s,t \in \Term_{\LPA} \forall \alpha \in \Val(\phi) \Big(\phi = (s=t) \rightarrow \big( S_{j+1}(s=t,\alpha) \equiv s^{\alpha} = t^{\alpha} \big) \Big).$
			\item $\forall \psi \in \form_{\LPA} \forall \alpha \in \Val(\phi) \Big( \phi = \neg \psi \rightarrow \big( S_{j+1}(\phi,\alpha) \equiv \neg S_{j+1}(\psi,\alpha) \big)\Big)$.
			\item $\forall \psi, \eta \in \form_{\LPA} \forall \alpha \in \Val \phi \Big(\phi = \psi \vee \eta \rightarrow \big(S_{j+1}(\phi, \alpha) \equiv S_{j+1}(\psi, \alpha) \vee S_{j+1}(\eta,\alpha) \big)\Big)$.
			\item $\forall \psi \in \form_{\LPA}\forall v \in \Var \forall \alpha \in \Val(\phi) \Big( \phi = (\exists v \psi) \rightarrow \big( S_{j+1}(\phi,\alpha) \equiv \exists \beta \sim_v \alpha S_{j+1}(\psi,\beta) \big)\Big)$. 
		\end{itemize}	   
	Note that we officially work in a language without conjunction or universal quantifiers. This choice is simply for convenience and does not affect our results. 
	
	By direct and simple verification, we check that if our construction can be performed, the resulting model $(M^*,T^*,I^*,T')$ satisfies our requirements, i.e.:
	\begin{itemize}
		\item $(M^*,T^*,I^*) \models \ElDiag(M,T,I)$.
		\item $(M,T') \models \CT^-$. 
		\item $T^* \subseteq T'$.
		\item Full collection scheme holds for the arithmetical language expanded with the predicates $T^*$, $X_{T'\phi}$, where $\phi \in \form_{\LPA}(M^*)$ (where $T'\phi$ are the disintegration of $T'$).
	\end{itemize}
In order to verify the last item, notice that every such collection axiom contains only finitely many predicates $X_{T'\phi}$. A model with finitely many such predicates is a union of the elementary chain of models containing the predicates $X^j_{\phi}$. In our construction, we guaranteed that collection scheme holds for the arithmetical language expanded with finitely many such predicates and the predicates $T_j$ corresponding to $T^*$. Therefore, full collection scheme holds by elementarity. 

So the only thing which we need to check is whether our construction can indeed be performed. In other words, we need to verify whether the theories $\Theta_{j}$ can be inductively shown to be consistent. This will be proved in a separate lemma.
	\end{proof}

\begin{lemma} \label{lem_niesprzecznosc_teoryj_EV}
	All theories $\Theta_j$ defined in the above proof of Lemma \ref{lem_enayat_visser_z_kolekcja} are consistent. 
\end{lemma}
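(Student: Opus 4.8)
The plan is to prove consistency of each $\Theta_j$ by induction on $j$, following the Enayat--Visser strategy but carrying along the extra predicates $X^j_\phi$ and the collection scheme. For the base case, $\Theta_{-1}$ (or the first nontrivial theory $\Theta_0$) is consistent because $(M,T,I) \models \CT^-{\res} I + \Coll + \StrReg$ is given by hypothesis, so we only need to see that the axioms asking for compositionality of $S_1$ on formulae of $M_0 = M$ together with the stabilisation, extensionality, structural regularity and collection axioms can be added. First I would reduce the inductive step to a single combinatorial core: assuming $(M_j, T_j, I_j, X^j_\phi)_{\phi \in M_{j-1}}$ is recursively saturated (which we may arrange, since we are working with countable recursively saturated models and chronic resplendence), I would apply the Barwise--Schlipf--Ressayre theorem (Theorem~\ref{th_bsr}) to conclude that $\Theta_j$ is realised provided it is a computable theory consistent with $\ElDiag(M_j, T_j, I_j, X^j_\phi)_{\phi \in M_{j-1}}$. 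Computability is immediate from the syntactic form of the axioms, so everything reduces to showing this consistency.

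The heart of the argument is therefore a \emph{finitary} statement provable in the base model: for any finite fragment of the required axioms --- which mentions only finitely many formulae $\phi_1, \ldots, \phi_n \in \form_{\LPA}(M_j)$, hence (by the usual trick) only formulae of depth below some $c \in M_j$ --- one can find, inside (an elementary extension of) $(M_j, T_j, I_j)$, an interpretation of $S_{j+1}$ and of the finitely many $X^{j+1}_{\phi_i}$ satisfying compositionality on these formulae, extending $T_j$, extending $S_j$ on old formulae, respecting extensionality and $\StrReg$, \emph{and} such that the collection scheme holds for the arithmetical language expanded by $T_{j+1}$ and these finitely many $X^{j+1}_{\phi_i}$. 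The key observation is that a satisfaction predicate compositional for formulae of depth $\le c$ is, up to the behaviour forced by $\CT^-{\res}[0,c]$, definable from an arithmetical partial truth predicate $\Tr_c$ (Theorem~\ref{th_arytmetyczne_predykaty_prawdy}) together with the given $T_j$ on the "boundary": on formulae whose depth already lies in $I_j$ we must copy $T_j$, and on the finitely many extra layers up to $c$ we build the Tarski clauses explicitly. Crucially, because only the \emph{old} predicate $T_j$ (which already satisfies $\Coll$ jointly with the old $X^j_\phi$ by the inductive hypothesis, and for which $\CT^-{\res}I_j$ holds) and the new explicitly-defined layers are involved, and because all the new $X^{j+1}_{\phi_i}$ are \emph{definable} in $(M_j, T_j, I_j)$ from $T_j$ and $\Tr_c$, the collection scheme for the expanded language reduces to collection in $(M_j, T_j, I_j)$ itself, which holds by hypothesis. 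The $\StrReg$ requirement is arranged by first passing from each formula to its structural template and value-sequence before applying the Tarski clauses, exactly as in the construction of $T'$ in the proof of Lemma~\ref{lem_iteracja_lematu_o_kolekcji}.

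Concretely, the inductive step would go: (1) fix a finite fragment of $\Theta_j$, determined by $\phi_1, \ldots, \phi_n$ of depth $< c$; (2) using $\Tr_c$ and the given $T_j$, define a candidate $S$ on all $(\psi, \alpha)$ with $\dpt(\psi) < c$ by: reduce $\psi[\alpha]$ to its structural template with a value-sequence, and then set $S(\psi,\alpha)$ according to the Tarski clauses on the extra layers and according to $T_j$ where $\dpt \in I_j$, checking the clauses cohere because $(M_j,T_j,I_j) \models \CT^-{\res}I_j + \StrReg$; (3) define the candidate $X^{j+1}_{\phi_i}$ as the projections of $S$; (4) verify that this interpretation satisfies the listed finite fragment, the nontrivial points being compositionality (by construction), the stabilisation axiom for $S_j$ (because $S$ agrees with $T_j$ on old formulae, and $S_j$ was itself so defined), and collection for $T_{j+1}, X^{j+1}_{\phi_i}$ (which, via definability, is an instance of collection already available in $(M_j, T_j, I_j)$). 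Then Barwise--Schlipf--Ressayre gives the realised model $(M_{j+1}, T_{j+1}, I_{j+1}, X^{j+1}_\phi, S_{j+1})_{\phi \in M_j}$, and we may additionally take it recursively saturated to continue the induction.

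The main obstacle, and the point requiring genuine care, is step (4)'s collection clause: we must be certain that collection for the \emph{new} predicates $X^{j+1}_{\phi_i}$ \emph{jointly with} $T_{j+1}$ really does reduce to collection in the base model. This works precisely because the $X^{j+1}_{\phi_i}$ are first-order definable over $(M_j, T_j, I_j)$ using $\Tr_c$ and $T_j$ --- so any collection instance over the expanded language is equivalent to a collection instance over $(M_j, T_j, I_j)$, which holds by the inductive hypothesis. The subtlety is that this definability is exactly what the elementarity-in-the-$X^j_\phi$ requirement of the outer construction is designed to preserve across stages: without it, a later stage could "move" the witnesses inside some $X_{\phi_i}$ and destroy boundedness. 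A secondary delicate point is the interaction of $\StrReg$ with the coherence of the Tarski clauses across the finitely many new depth-layers; this is handled exactly as the negation case in the proof of Lemma~\ref{lem_iteracja_lematu_o_kolekcji}, passing through structural templates before evaluating, and I would simply cite that argument rather than repeat it.
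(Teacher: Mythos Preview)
There is a genuine gap in your proposal. The central error is the appeal to the arithmetical partial truth predicate $\Tr_c$ from Theorem~\ref{th_arytmetyczne_predykaty_prawdy}: that theorem produces $\Tr_n$ only for \emph{standard} $n$, whereas the formulae $\phi_1, \ldots, \phi_n \in \form_{\LPA}(M_j)$ appearing in a finite fragment of $\Theta_j$ may well have nonstandard depth. There is no arithmetically definable truth predicate for formulae of a fixed nonstandard depth---if there were, $\CT^-$ would be interpretable in $\PA$. Your phrase ``the finitely many extra layers up to $c$'' is correspondingly misleading: the gap between the top of $I_j$ (or the depths already handled by $S_j$, i.e.\ those of formulae in $M_{j-1}$) and the depth of a new $\phi_i$ is typically a nonstandard element of $M_j$, so one cannot ``build the Tarski clauses explicitly'' across it in any externally finite way. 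A related confusion is between $T_j$ (the elementary extension of the original $T$, compositional only on $I_j$) and $S_j$ (compositional on all of $M_{j-1}$): the stabilisation axiom demands agreement with $S_j$ on formulae from $M_{j-1}$, not with $T_j$ on $I_j$.

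The paper's argument avoids this obstacle by never attempting to define $S_{j+1}$ on all formulae of depth $\leq c$. Instead it works only with the structural-similarity classes $[\phi_1]_\sim, \ldots, [\phi_n]_\sim$ of the finitely many formulae that actually appear, orders these classes by the (externally finite) transitive closure $\unlhd$ of the direct-subformula relation lifted to classes, and defines $S_{j+1}$ by induction on $\unlhd$. For a $\unlhd$-minimal class $[\phi]$ that meets $M_{j-1}$, the structural template $\widehat{\phi}$ already lies in $M_{j-1}$, so one sets $S_{j+1}(\psi,\alpha) :\equiv S_j(\widehat{\phi}, \beta)$ for the unique $\beta$ with $(\psi,\alpha)\approx(\widehat{\phi},\beta)$; for a minimal class disjoint from $M_{j-1}$, one simply declares $S_{j+1}$ empty there. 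Non-minimal classes are then handled compositionally in standardly many steps. The resulting $S_{j+1}$ is first-order definable from $T_j$ and finitely many of the predicates $X^j_{\widehat{\phi_i}}$ with $\widehat{\phi_i}\in M_{j-1}$, and \emph{that} definability is what delivers collection. Your instinct that definability is the mechanism for preserving $\Coll$ is exactly right; the point is that the definition must route through $S_j$ and structural templates, not through a nonexistent $\Tr_c$.
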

In order to deal with the regularity axioms in the following proof, it will be handy to have some extra notation. Let $\phi, \psi \in \form_{\LPA}, \alpha \in \Val(\phi), \beta \in \Val(\psi)$. We say that $(\phi,\alpha), (\psi,\beta)$ are \df{structurally equivalent} iff $\phi[\alpha] \approx \psi[\beta]$, i.e., the sentences $\phi[\alpha]$ and $\psi[\beta]$ are structurally equivalent. We will also denote this relation $(\phi,\alpha) \approx (\psi,\beta)$. Recall that we introduced this notion in Definition \ref{def_structural regularity}. Recall that we call $\phi$ and $\psi$ structurally similar iff they have the same template $\widehat{\phi}$ (see Definition \ref{def_structural_template}.) If $\phi$ and $\psi$ differ only by renaming bounded variables without making any free variable bounded, we say that $\phi$ and $\psi$ are \df{$\alpha$-similar} and denote it with $\phi \simeq \psi$. We are extremely sorry for the amount of notation we need to introduce which has deceptively similar meaning. After these preliminary remarks, we can proceed to the proof.
\begin{proof} [Proof of Lemma \ref{lem_niesprzecznosc_teoryj_EV}]
	In the proof we assume that $j>0$. The case $j=0$ is handled in a similar fashion with a slightly simpler argument. Fix a model $(M_j,T_j,I_j,S_j, X^j_{\phi})_{\phi \in M_{j-1}}\models \Theta_{j}$. We will argue by compactness that the theory $\Theta_{j+1}$ defined using this model is consistent. (The model determines the theory $\Theta_{j+1}$ via its elementary diagram.)  
	
	Fix any finite $\Theta \subset \Theta_{j+1}$. There are only finitely many $\phi \in \form_{\LPA}(M_j)$ either occurring under the predicate $S_{j+1}$ or as an index of the predicate $X^{j+1}_{\phi}$. Let us enumerate these formulae as $\phi_1, \ldots, \phi_n$. It is enough to find an interpretation of the predicate $S_{j+1}$ in the model $(M_j,T_j,I_j)$ such that: 
	\begin{itemize}
		\item $T_j \subset S_{j+1}$.
		\item $S_{j+1}$ respects regularity axioms.
		\item $S_{j+1}$ respects compositional conditions on formulae $\phi_1, \ldots, \phi_n$.
		\item $S_{j+1}$ together with $T_j$ satisfies full collection scheme.
		\item $S_{j+1}(\phi,\alpha) \equiv S_j(\phi,\alpha)$ holds whenever $\phi \in  M_{j-1}$ and $\phi$ is among $\phi_1, \ldots, \phi_n$.
	\end{itemize}
	Notice that the last item guarantees both that stabilisation condition and elementarity for the language with the predicates $X_{\phi}$ hold.
	
	Consider the equivalence classes $[\phi_i]_{\sim}$ of $\phi_1, \ldots, \phi_n$. Consider the following relation $\lhd_0$: $[\phi] \lhd_0 [\psi]$ iff there exist $\phi' \in [\phi]$ and $\psi' \in [\psi]$ such that $\phi'$ is a direct subformula of $\psi'$. One can check that $\unlhd$, the transitive closure of $\lhd_0$, is a partial order on classes (since it is a transitive closure of some binary relation, it is enough to check that no loops can occur, which is obvious).
	
	We define the predicate $S_{j+1}$ by induction on $\unlhd.$ If $[\phi]$ is minimal with respect to this ordering, we consider two cases: either $[\phi] \cap M_{j-1} = \emptyset$ or not. If the former holds, we set:
	\begin{displaymath}
	\forall \psi \in [\phi] \forall \alpha \in \Val(\psi) \ \neg S_{j+1}(\psi, \alpha).
	\end{displaymath}
	Thus $\phi$ defines an empty set under the satisfaction predicate. 
	In the latter case, since $[\phi] \cap M_{j-1}$ is nonempty, and the template $\widehat{\phi}$ is definable with a parameter in $M_{j-1}$, it must also be in $M_{j-1}$ by elementarity. Notice that for any $\psi \in [\phi]$, there exists $\bar{s} \in \TermSeq_{\LPA}$ such that $\psi \simeq \widehat{\phi}(\bar{s}).$ Now, for any $\alpha \in \Val(\phi)$, $\psi[\alpha]$ is also an element of $[\phi]$, so there exists the unique $\bar{t}$ such that $\psi[\alpha] \simeq \widehat{\phi}(\bar{t})$ and, consequently, there is (the unique) $\beta \in \Val(\widehat{\phi})$ such that $\psi[\alpha] \approx \widehat{\phi}[\beta]$ (namely, $\beta$ equal to the sequence of values $\bar{\val{t}}$). We set: 
	\begin{displaymath}
	S_{j+1}(\psi,\alpha) \equiv S_{j}(\widehat{\phi},\beta). 
	\end{displaymath}
	Finally, if $\phi \sim \phi_i$ for some $i \leq n$ and $[\phi]$ is not minimal in the ordering $\unlhd$, we inductively define the behaviour of $S_{j+1}$ so that the compositional conditions are satisfied. For instance, if $\phi = \exists v \psi$, we set
	\begin{displaymath}
	S_{j+1} (\phi,\alpha) \equiv \exists \beta \sim_v \alpha S_{j+1}(\psi,\beta).
	\end{displaymath}
	We add to $S_{j+1}$ all pairs $(\phi, \alpha)$ such that $\phi \in \form_{\LPA}(M_j)$, $\alpha \in \Val(\phi)$, and $T_j(\phi[\alpha])$ holds. Having defined $S_{j+1}$, we set $X^{j+1}_{\phi}(\alpha) \equiv X^{j}_{\phi}(\alpha)$ for $\phi \in M_{j-1}$ and define the sets $X^{j+1}_{\phi}$ so as the definition-axiom of $X^{j+1}_{\phi}$ is satisfied for $\phi \in M_j \setminus M_{j-1}$. 
	
	We have to check that our requirements are satisfied. Let us notice that $S_{j+1}$ satisfies full collection scheme, since it is arithmetically definable in the predicates $T_j$ and $X^j_{\phi}$, where $\phi \in M_{j-1} \cap \{\widehat{\phi_1}, \ldots, \widehat{\phi_n}\}$. These predicates satisfy jointly full collection by assumption. This means that $X^{j+1}_{\phi}$ for $\phi \in M_j$ defined using $S_{j+1}$ also satisfy collection.
	
	Since by induction hypothesis $S_j$ satisfied regularity and compositionality axioms, we check that $S_{j+1}$ agrees with $S_j$ on formulae from $M_{j-1}$. Hence the defined structure satisfies $\ElDiag(M_j,T_j,I_j,X^j_{\phi})_{\phi \in \Theta'}$ where $\Theta'$ is the finite set of $\phi$ such that $X_{\phi}$ occurred in the analysed finite theory $\Theta$. 
	
	The obtained structure satisfies compositional axioms, since regularity and compositionality was satisfied on $S_j$ by induction hypothesis and on $T_j$ by elementarity and the assumption that $T$ satisfies $\CT^- \res I + \StrReg$. Compositional axioms are satisfied on formulae which are not structurally similar to the ones in $M_{j-1}$ directly by our construction.
	
	Finally, we check that the regularity axioms are satisfied. We prove this by induction on the order $\unlhd$. If $[\phi]$ is minimal and $[\phi] \cap M_{j-1} \neq \emptyset$, then structural regularity and extensionality conditions follow by the induction hypothesis on $S_j$ and the definition of $S_{j+1}$. These properties follow directly by definition for the minimal $[\phi]$ such that $[\phi] \cap M_{j-1}$ is empty. If $[\phi]$ is not minimal, then we check that they are preserved by extending $S_{j+1}$ compositionally. This is very simple for the negation and disjunction case, so let us check that they preserved in the step for the existential quantifier. 
	
	Suppose that $S_{j+1}$ satisfies structural regularity and extensionality for formulae in $[\psi]$. Let $\phi = \exists v \psi$. 
	
	In order to verify the extensionality condition, we want to check that $S_{j+1}(\phi,\alpha)$ holds iff $S_{j+1}(\phi[\alpha], \emptyset)$ holds. By compositionality and the induction hypothesis, we have the following equivalences:
	\begin{eqnarray*}
		S_{j+1}(\phi,\alpha) & \equiv & \exists \beta \sim_v \alpha \ S_{j+1}(\psi,\beta) \\
		& \equiv & \exists \beta \sim_v \alpha \ S_{j+1}(\psi[\beta],\emptyset)   \\
		& \equiv & \exists x \ S_{j+1}(\psi[\alpha],\{\tuple{v,x}\}) \\
		& \equiv & S_{j+1}(\phi[\alpha],\emptyset).
	\end{eqnarray*}  
	Notice that $\{\tuple{v,x}\}$ is an assignment which sends $v$ to $x$, so  $S_{j+1}(\psi[\alpha],\{\tuple{v,x}\})$ makes sense. Observe that $\psi[\alpha]$ is a formula with only one free variable $v$, the rest of free variables in $\psi$ being filled out by the numerals $\num{\alpha(w)}$.
	
	We verify structural regularity in a similar manner. Let us assume that $\phi \approx \phi'$ and that $\phi = \exists v \psi.$ Then $\phi' = \exists w \psi'$ and there exist sequences $\bar{t}, \bar{s} \in \ClTermSeq_{\LPA}(M_j)$ with the same values such that
	\begin{displaymath}
	\phi \simeq \widehat{\phi}(\bar{t}), \phi' \simeq \widehat{\phi}(\bar{s}). 
	\end{displaymath}  
	 Suppose that $S_{j+1}(\phi,\emptyset)$ holds. By symmetry, it is enough to show that $S_{j+1}(\phi',\emptyset)$ holds as well. 
	 
	 Since $S_{j+1}(\phi,\emptyset)$ holds, by compositionality there exists $\alpha \sim_v \emptyset$ such that $S_{j+1}(\psi,\alpha)$ holds (where $\alpha$ is an assignment with the domain either equal to $\{v\}$ or empty). By extensionality, $S_{j+1}(\psi[\alpha],\emptyset)$ holds. Let $\beta$ be an assignment such that $\beta(w) = \alpha(v)$. It is enough to show that $\psi[\alpha] \approx \psi'[\beta]$, since then $S_{j+1}(\phi',\emptyset)$ follows by structural regularity and compositionality.
	 
	 Let us check that $\psi[\alpha] \approx \psi'[\beta].$ Let $\alpha(v) = \beta(w)=c$. Let $\bar{t}' = \bar{t} \frown \tuple{\num{c}}, \bar{s}' = \bar{s} \frown \tuple{\num{c}}.$ There exists a variable $u$ and a formula $\eta$ such that $\widehat{\phi} = \exists u \eta$ and both $\widehat{\psi}$ and $\widehat{\psi}'$ are equal to $\widehat{\eta}$. We see that 
	 $\psi[\alpha] \simeq \eta(\bar{t}')$ and $\psi'[\beta] \simeq \eta(\bar{s}')$ where  $\bar{s}'$ and $\bar{t}'$ have the same values. On the other hand,  there exist sequences of terms $\bar{s}''$ and $\bar{t}''$ with the same values such that $\eta(\bar{t}') \simeq \widehat{\eta}(\bar{t}'')$ and $\eta(s') \simeq \widehat{\eta}(\bar{s}'')$. Since $\widehat{\eta} = \widehat{\psi} = \widehat{\psi'}$, by definition of structural equivalence this implies $\psi[\alpha] \approx \psi'[\beta]$ thus concluding the proof of Lemma \ref{lem_niesprzecznosc_teoryj_EV}.

\end{proof}

Let us notice that in the above proof, collection was preserved, since the interpretations of $S_{j+1}$ for finitely many new formulae were arithmetically defined in finitely many predicates $X_{\phi}$ and $T_j$. We could run a very similar argument in order to obtain a truth predicate satisfying internal induction $\INT$, assuming that it was satisfied by our initial $T$, thus proving Lemma \ref{lem_rozszerzenia_model_ctminus_int}  and Theorem \ref{th_ctminus_plus_int_plus_strreg_konserwatywne}. Since the argument in that case is essentially the same with no non-trivial modifications required, we omit it. 
	
\section*{Acknowledgements}	
This research was supported by an NCN OPUS grant 2017/27/B/HS1/01830, "Truth theories and their strength."


\begin{thebibliography}{10}
	
	\bibitem{cieslinskict0}
	Cezary Cieśliński.
	\newblock Deflationary truth and pathologies.
	\newblock {\em The Journal of Philosophical Logic}, 39(3):325--337, 2010.
	
	\bibitem{cies}
	Cezary Cieśliński.
	\newblock Truth, conservativeness and provability.
	\newblock {\em Mind}, 119:409--422, 2010.
	
	\bibitem{cies_ksiazka}
	Cezary Cieśliński.
	\newblock {\em The Epistemic Lightness of Truth. {D}eflationism and its Logic}.
	\newblock Cambridge University Press, 2017.
	
	\bibitem{enayatlelykwcislo}
	Ali Enayat, Mateusz Łełyk, and Bartosz Wcisło.
	\newblock Truth and feasible reducibility.
	\newblock {\em to appear in Journal of Symbolic Logic}.
	
	\bibitem{enayatmohsenipour}
	Ali Enayat and Shahram Mohsenipour.
	\newblock Model theory of the regularity and reflection schemes.
	\newblock {\em Archive for Mathematical Logic}, 47:447--464, 2008.
	
	\bibitem{enayatvisser2}
	Ali Enayat and Albert Visser.
	\newblock New constructions of satisfaction classes.
	\newblock In Theodora Achourioti, Henri Galinon, Jos\'e Mart\'inez~Fern\'andez,
	and Kentaro Fujimoto, editors, {\em Unifying the Philosophy of Truth}, pages
	321--325. {S}pringer-{V}erlag, 2015.
	
	\bibitem{lelyk_thesis}
	Mateusz Łełyk.
	\newblock Axiomatic theories of truth, bounded induction and reflection
	principles.
	
	\bibitem{franzen}
	Torkel Franzen.
	\newblock {\em Inexhaustibility{:} an Inexhaustive Treatment}.
	\newblock {A} {K} {P}eters/{CRC} {P}ress, 2004.
	
	\bibitem{hajekpudlak}
	Petr H\'ajek and Pavel Pudl\'ak.
	\newblock {\em Metamathematics of First-Order Arithmetic}.
	\newblock Springer-Verlag, 1993.
	
	\bibitem{halbach}
	Volker Halbach.
	\newblock {\em Axiomatic Theories of Truth}.
	\newblock Cambridge University Press, 2011.
	
	\bibitem{kaye}
	Richard Kaye.
	\newblock {\em Models of {P}eano {A}rithmetic}.
	\newblock Oxford{:} Clarendon Press, 1991.
	
	\bibitem{kaye-slides}
	Richard Kaye and Alexander Jones.
	\newblock Truth and collection in nonstandard models of $\pa$.
	\newblock {\em Midlands Logic Seminar}.
	
	\bibitem{keisler}
	Jerome Keisler.
	\newblock {\em Model Theory for Infinitary Logic}.
	
	\bibitem{WcisloKossak}
	Roman Kossak and Bartosz Wcisło.
	\newblock Disjunctions with stopping condition.
	
	\bibitem{kotlarski}
	Henryk Kotlarski.
	\newblock Bounded induction and satisfaction classes.
	\newblock {\em Zeitschrift f\"ur matematische Logik und Grundlagen der
		Mathematik}, 32:531--544, 1986.
	
	\bibitem{kkl}
	Henryk Kotlarski, Stanisław Krajewski, and Alistair Lachlan.
	\newblock Construction of satisfaction classes for nonstandard models.
	\newblock {\em Canadian Mathematical Bulletin}, 24:283--93, 1981.
	
	\bibitem{kotlarski-ratajczyk}
	Henryk Kotlarski and Zygmunt Ratajczyk.
	\newblock More on induction in the language with a full satisfaction class.
	\newblock {\em Zeitschrift f\"ur mathematische logik}, 36:441--454, 1990.
	
	\bibitem{lachlan}
	Alistair~H. Lachlan.
	\newblock Full satisfaction classes and recursive saturation.
	\newblock {\em Canadian Mathmematical Bulletin}, 24:295--297, 1981.
	
	\bibitem{leigh}
	Graham Leigh.
	\newblock Conservativity for theories of compositional truth via cut
	elimination.
	\newblock {\em The Journal of Symbolic Logic}, 80(3):845--865, 2015.
	
	\bibitem{lelyk_wcislo_models}
	Mateusz {\L}ełyk and Bartosz Wcisło.
	\newblock Models of weak theories of truth.
	\newblock {\em Archive for Mathematical Logic}, 56:453--474, 2017.
	
	\bibitem{wcislyk}
	Mateusz {\L}ełyk and Bartosz Wcisło.
	\newblock Notes on bounded induction for the compositional truth predicate.
	\newblock {\em The Review of Symbolic Logic}, 10:455--480, 2017.
	
	\bibitem{smith}
	Stuart~T. Smith.
	\newblock Nonstandard definability.
	\newblock {\em Annals of Pure and Applied Logic}, 42(1):21--43, 1989.
	
	\bibitem{smorynski}
	Craig Smory\'nski.
	\newblock $\omega$-consistency and reflection.
	\newblock In {\em Colloque International de Logique (Colloq. Int. CNRS),},
	pages 167 -- 181. CNRS Inst. B. Pascal, 1977.
	
\end{thebibliography}
\end{document}